 \def\ps@pprintTitle{%
 	\let\@oddhead\@empty
 	\let\@evenhead\@empty
 	\def\@oddfoot{\footnotesize\itshape
 		{} \hfill\today}%
 	\let\@evenfoot\@oddfoot
 }
\newtheorem{theor}{Theorem}[section]
\newtheorem{prop}[theor]{Proposition}
\newtheorem{cor}[theor]{Corollary}
\newtheorem{lemma}[theor]{Lemma}
\theoremstyle{definition} 
\newtheorem{defin}[theor]{Definition}
\newtheorem{rem}[theor]{Remark}
\newtheorem{conj}{Conjecture}
\newtheorem{ex}{Example}
\newtheorem{conv}{Convention}
\DeclareMathOperator{\Sym}{Sym}
\DeclareMathOperator{\id}{id}
\DeclareMathOperator{\Ret}{Ret}
\begin{document}

\begin{frontmatter}
	\title{On uniconnected solutions of the Yang-Baxter equation and Dehornoy's class
}
	\author[*]{M.~CASTELLI	\tnoteref{mytitlenote1}}
		\ead{marco.castelli@unisalento.it - marcolmc88@gmail.com}
	\tnotetext[mytitlenote1]{The author is member of GNSAGA (INdAM).}
 
	\author{S. RAMIREZ }
	\ead{sramirez@dm.uba.ar}

\begin{abstract}
In the first part, we focus on indecomposable involutive solutions of the Yang-Baxter equation whose permutation group forces them to be uniconnected. Indecomposable involutive solutions with a permutation group isomorphic to a dihedral group or a minimal non-cyclic group are studied in detail. In the last part, we study the Dehornoy's class of involutive solutions (not necessarily indecomposable) and its link with left braces. As an application, we give an upper bound for several families of indecomposable involutive solutions and we compute the precise value in some other cases.
\end{abstract}

\begin{keyword}
\texttt{set-theoretic solution\sep Yang-Baxter equation\sep brace \sep indecomposable solution}
\MSC[2020] 
16T25\sep 81R50 
\end{keyword}
\end{frontmatter}

\section*{Introduction}

The quantum Yang-Baxter equation comes from theoretical physics and it first appeared in a paper by C.N. Yang \cite{yang1967}. In 1992 Drinfel'd \cite{drinfeld1992some} suggested the study of the set-theoretical version of this equation. Specifically, a pair $(X,r)$ is said to be a  \emph{set-theoretic solution of the Yang-Baxter equation} if $X$ is a non-empty set, and $r:X\times X\to X\times X$ is a map such that the relation
\begin{align*}
\left(r\times\id_X\right)
\left(\id_X\times r\right)
\left(r\times\id_X\right)
= 
\left(\id_X\times r\right)
\left(r\times\id_X\right)
\left(\id_X\times r\right)
\end{align*}
is satisfied.  
Writing a solution $(X,r)$ as $r\left(x,y\right) = \left(\sigma_x\left(y\right),\tau_y\left(x\right)\right)$, with
$\sigma_x, \tau_x$ maps from $X$ into itself, for every $x\in X$, we say that $(X, r)$ is \emph{non-degenerate} if $\sigma_x,\tau_x\in \Sym_X$, for every $x\in X$.\\
After the papers of Gateva-Ivanova and Van Den Bergh  \cite{gateva1998semigroups} and Etingov, Schedler, and Soloviev \cite{etingof1998set}, the attention of several people focused on the \emph{involutive} non-degenerate set-theoretic solutions, where a set-theoretic solution $(X,r)$ is said to be involutive if $r^2$ is the identity map on $X\times X$. From now on, even if not specified, every involutive non-degenerate set-theoretic solution of the Yang-Baxter equation will be simply called \emph{involutive solution}.\\
Among involutive solutions, in \cite[Section 2]{etingof1998set} the class of \textit{indecomposable solutions} was introduced.  The interest in these solutions is motivated by the fact that they are the basic solutions that allow to construct and classify all the other ones, not necessarily indecomposable, by suitable construction-tools, such as twisted union, dynamical extension and retraction (see \cite{cacsp2018,etingof1998set,vendramin2016extensions} for more details). However, the classification-problem of involutive solutions is far from solved even if we restrict to indecomposable ones. In this context, several machineries were developed. In particular, several people considered the structure of left brace on the permutation group $\mathcal{G}(X,r)$ of an involutive solution $(X,r)$. Remarkable results obtained in  \cite{bachiller2016solutions,rump2020} states that every indecomposable involutive solution $(X,r)$ having permutation left brace $\mathcal{G}(X,r)$ isomorphic to a left brace $B$ can be obtained by means of a suitable core-free subgroup $H$ of the multiplicative group of $B$ and considering the left cosets of $B$ by $H$ as underlying set. Moreover, by \cite[Theorem 1]{rump2020}, every indecomposable involutive solution $(X,r)$ is an epimorphic image of an indecomposable involutive solution $(Y,s)$ such that $\mathcal{G}(Y,s)$ acts regularly on the set $Y$. Therefore, the classification of indecomposable solutions $(X,r)$ such that $\mathcal{G}(X,r)$ acts regularly on $X$, which in \cite[Definition 1]{rump2020} are named \emph{uniconnected}, is the first milestone for the classification of all the indecomposable involutive solutions. A natural approach consists in the study of the uniconnected ones for which the permutation group $\mathcal{G}(X,r)$ belong to a specific class. Indecomposable solutions with abelian permutation group were recently studied (of course, these solutions always are uniconnected) and very strong results were obtained. Indeed, in \cite{JePiZa20x} the ones with multipermutation level $2$ were completely classified, while in \cite{jedlivcka2021cocyclic,rump2020classifi} the restriction on the multipermutation level was removed for the ones with cyclic permutation group. Further theoretical results on these solutions are recently showed in \cite{castelli2023studying}, where the infinite case also is considered. Much less is known about uniconnected solutions with non-abelian permutation group. In \cite{castelli2021classification} a description of the ones with a Z-group permutation group of odd order were given, giving a complete classification if the underlying set has square-free odd order, while in \cite[Section 4]{rump2022class} uniconnected solutions provided by primary cyclic left braces (i.e. left braces having prime-power order and cyclic additive group) were completely classified.  Indecomposable involutive solutions not necessarily uniconnected are recently studied in \cite{cedo2022indecomposable,dietzel2023indecomposable}, where the ones having square-free order are considered, and in \cite{JePiZa22x}, where the ones with multipermutation level $2$ are characterised. \\
In this paper, we study indecomposable involutive solutions $(X,r)$ for which the structure of the permutation group $\mathcal{G}(X,r)$ force them to be uniconnected. After recovering basic definition and results in the first section, in Section $2$ we consider indecomposable solutions provided by cyclic left braces and we show that these solutions and their retractions always are uniconnected. As an application, we show that indecomposable solutions with dihedral permutation group of size $2n$ (where $n$ is an odd number) and indecomposable solutions with square-free permutation group always are uniconnected. They are completely classified in Section $3$. In Section $4$, we turn our attention to indecomposable solutions for which the permutation group is such that all its non-trivial subgroups are cyclic. Since the ones with cyclic permutation group are completely classified (see \cite{jedlivcka2021cocyclic}), we focus to the non-cyclic case. These groups are completely classified in \cite{miller1903non} and we will call them  \emph{minimal non-cyclic}. Apart two exceptions, we have to consider non-abelian groups having size $q^n p$ (where $q$ and $p$ are distinct prime numbers) and with a normal subgroup of size $p$. We show that indecomposable solutions with minimal non-cyclic group are uniconnected and, as a final result of the section, a quite explicit classification is given.\\
In the last part of the paper, we study the Dehornoy's class of a (not necessarily indecomposable) solution, an important invariant related to Garside monoids and groups (see \cite{chouraqui2010garside,dehornoy2015set} for more details). Following the approach of \cite{lebed2022involutive}, where the Dehornoy's class is related to left braces, we show that, given an involutive solution $(X,r)$, its Dehornoy's class concides with the maximum order of an element in the additive group of the permutation left brace $\mathcal{G}(X,r)$. We use this fact to provide an upper bound of the Dehornoy's class to several families of involutive solutions, giving an evidence to \cite[Conjecture $3.6$]{feingesicht2023dehornoy}. Finally, we specialize our results to indecomposable involutive solutions, computing the Dehornoy's class of some families of indecomposable involutive solutions and providing a positive answer to \cite[Conjecture $3.4$]{feingesicht2023dehornoy} in several cases.

\section{Basic definitions and results}

We collect in this section some basic definitions and results on left braces and set-theoretical solutions to the Yang-Baxter equation.

\begin{defin}
A \emph{set-theoretical solution to the Yang-Baxter equation} is a pair $(X,r)$, with $X$
a set and $r : X\times X \to X\times X$ a function that satisfies
\[
(r\times\id)(\id\times r)(r\times\id) = (\id\times r)(r\times\id)(\id\times r).
\]
\end{defin}

\noindent We will restrict to the class of \emph{non-degenerate} and \emph{involutive} set-theoretical solutions.

\begin{defin}
Let $(X,r)$ be a set-theoretical solution to the Yang-Baxter equation and write
\(r(x,y)=(\sigma_x(y),\tau_y(x))\). We say the solution is \emph{non-degenerate}
if all the maps \(\sigma_x,\tau_y:X\to X\) are bijections. We say the solution
is \emph{involutive} if \(r^2=\id\). 
\end{defin}

\noindent As for other algebraic objects, we can define the notion of isomorphism. Specifically, two set-theoretical solutions to the Yang-Baxter equation $(X,r)$ and $(X',r')$ are \emph{isomorphic} if there exist a bijective map $f$ from $X$ to $X'$ (that will be called \emph{isomorphism}) such that $r' (f\times f)=(f\times f)r$.

\begin{conv}
    From now on, even if not specified, every set-theoretical solution to the Yang Baxter equation, which we will simply call \emph{solution}, will be involutive and non-degenerate.
\end{conv} 

\noindent In \cite{etingof1998set}, Etingof, Schedler and Soloviev introduced the \textit{retract relation} for involutive non-degenerate solutions, an equivalence relation which we denote by $\sim_r$. If $(X,r)$ is an involutive non-degenerate solution, then $x\sim_r y$ if and only if $\sigma_x=\sigma_y$, for all $x,y\in X$. In this way, we can define canonically another solution, having the quotient $X/\sim_r$ as underlying set, which is named \emph{retraction} of $(X,r)$ and is indicated by $\Ret(X, r)$. 
Of course, one can iterate the retraction-process obtaining the $n$-th retraction, for an arbitrary natural number $n$, which we indicate by $\Ret^n(X, r)$. 

\begin{defin}
  An involutive solution $(X,r)$ is said to be a \emph{multipermutation solution of level $n$} if $n$ is the minimal non-negative integer such that $\Ret^n(X)$ has cardinality one.  
\end{defin}

By a standard exercise, one can verify that the multipermutation level of an involutive solution is an isomorphisms invariant. An other important invariant, studied in several papers \cite{lebed2022involutive}, is the Dehornoy's class, that we define below. At first, recall that if $(X,r)$ is an involutive solution and $\cdotp$ is the binary operation on $X$ given by $x\cdotp y:=\sigma_x^{-1}(y)$ for all $x,y\in X$, the map $\Omega_n$ from $X^n$ to $X$ (where $X^n$ is the cartesian product of $X$ $n$ times) is inductively given by  $$\Omega_1(x_1)=x_1\hspace{2mm} and \hspace{2mm}\Omega_n(x_1,...,x_n):=\Omega_{n-1}(x_1,...,x_{n-1})\cdotp \Omega_{n-1}(x_1,...,x_{n-2},x_n)$$
    for all $n\in \mathbb{N}$, $n>1$ and $x_1,...,x_n\in X$.

\begin{defin}
    Let $(X,r)$ be an involutive solution. Then, $(X,r)$ has \emph{Dehornoy's class $n$} if $n$ is the minimal non-negative integer such that $\Omega_{n+1}(x,x,...,x,y)=y$ for all $x,y\in X$.
\end{defin}

\noindent Given an involutive solution $(X,r)$, we
define its \emph{permutation group} $\mathcal{G}(X,r)$ as the group of permutations
on $X$ generated by the maps $\sigma_x$. Using the permutation group we can define \emph{indecomposable} solutions as follows (our definition is equivalent to the original one given in \cite{etingof1998set}).

\begin{defin}
We say that $(X,r)$ is \emph{indecomposable}
if its permutation group $\mathcal{G}(X,r)$ acts transitively on $X$. If furthermore $\mathcal{G}(X,r)$ acts regularly on $X$, then $(X,r)$ will be called \emph{uniconnected}.
\end{defin}

\noindent Besides, the permutation group $\mathcal{G}(X,r)$ of an involutive solution can be
endowed with an additional group structure that always is abelian. This two group operations
make $\mathcal{G}(X,r)$ into an algebraic structure called \emph{left brace}.

\begin{defin}[\cite{cedo2014braces}, Definition 1]
A set $B$ endowed of two operations $+$ and $\circ$ is said to be a \textit{left brace} if $(B,+)$ is an abelian group, $(B,\circ)$ a group, and the equality $a\circ (b + c)= a\circ b - a + a\circ c$ follows, for all $a,b,c\in B$. 
\end{defin}

\noindent From now on, if $(B,+,\circ)$ is a left brace, the group $(B,+)$ will be called the \textit{additive group} and the group $(B,\circ)$ will be called the \textit{multiplicative group}. The most elementary example of left brace can be constructed taking and abelian group $(B,+)$ and setting $a\circ b:=a+b$ for all $a,b\in A$. Through the paper, we call these left braces \emph{trivial}. 

\begin{ex}
    Let $p$ be a prime number, $n$ a natural number and $t\in \{1,...,n\}$. Set $(B,+):=(\mathbb{Z}/p^n\mathbb{Z},+)$. Then, the operation $\circ$ given by $a\circ b:=a+b+p^t ab$ makes $(B,+)$ into a left brace that we will denote by $C(p,n,t)$. Moreover, by \cite[Proposition $3$]{rump2007braces} every left brace with cyclic multiplicative group is isomorphic to the direct product of such a left braces.
    \end{ex}

\begin{conv}
Given a left brace $(B,+,\circ)$ and an element $x$ of $B$, we will indicate by $o(x)_+$ the order of $x$ in $(B,+)$ and by $o(x)_\circ$ the order of $x$ in $(B,\circ)$.
\end{conv}

As we said before, every involutive solution give rise to a left brace on the permutation group $\mathcal{G}(X,r)$, where the multiplicative operation coincides with the usual composition. We will refer to $(\mathcal{G}(X,r),+,\circ)$ as the \emph{permutation left brace}.
The multiplicative group of a left brace acts by group automorphisms on the additive
group. The action is given by
\begin{align*}
  \lambda_{-} : (B,\circ) &\to \operatorname{Aut}(B,+)\\
  a &\mapsto (\lambda_a : x \mapsto -a + a \circ x)
\end{align*}

\noindent Similarly to rings, the notion of ideal of a left brace was given in \cite{rump2007braces} and reformulated in \cite[Definition 3]{cedo2014braces}.

\begin{defin}
Let $B$ be a left brace. A subset $I$ of $B$ is said to be a \textit{left ideal} if it is a subgroup of the multiplicative group and $\lambda_a(I)\subseteq I$, for every $a\in B$. Moreover, a left ideal is an \textit{ideal} if it is a normal subgroup of the multiplicative group.
\end{defin}

\noindent If $I$ is a left ideal, then $(I,\circ)$ always is a subgroup of $(B,\circ)$, even if in general is not a normal subgroup. For example, if $B$ is a left brace and $(B_p,+)$ is a Sylow subgroup of the additive group $(B,+)$, then $(B_p,+)$ is a left ideal of $B$. If in addition $ (B,\circ)$ is abelian, then $B_p$ is an ideal.\\
If $I$ is an ideal of a left brace $B$, then the structure $B/I$ is a left brace called the \emph{quotient left brace} of $B$ by $I$.  A special ideal of a left brace, introduced in \cite{rump2007braces}, is the socle. 

\begin{defin}
Let $B$ be a left brace. Then, the set 
$$
    Soc(B) := \{a\in B \ | \ \forall \,     b\in B \quad  a + b = a\circ b \}
$$
is called \emph{socle} of $B$.
\end{defin}


\noindent A left brace $B$ always provide an involutive non-degenerate solution $(B,r)$ by $\sigma_a(b):=\lambda_a(b)$ and $\tau_b(a):=\lambda^{-1}_{\lambda_a(b)}(a)$ for all $a,b\in B$. Moreover, we have that the underlying set of $\Ret(B,r)$ can be identified with the set $B/Soc(B)$. From now on, we will say that $B$ has \emph{multipermutation level $n$} if the canonical solution $(B,r)$ provided by $B$ is a multipermutation solution having level $n$.

\medskip

A subset $X$ of a left brace $B$ is called a \emph{cycle base} if it is invariant
under the $\lambda$-action and it generates the additive group of the left brace. If
moreover $B$ acts transitively on $X$ we say that it is a \emph{transitive cycle base}. Left braces and cycle bases are the key-ingredients useful to construct all the involutive solutions.

\begin{theor}[Theorem 3.1, \cite{bachiller2016solutions}]\label{costruz}
    Let $B$ be a left brace, $\{B_i\}_{i\in K}$ the set of the orbits of $B$ respect to the $\lambda$ action and, for every $i\in K$ let $a_i\in B_i$. Moreover, let $I$ be a subset of $K$ such that $Y=\bigcup_{i\in I} B_i$ is a cycle base and, for each $i\in I$, let $\{K_{i,j} \}_{j\in J_i}$ a family of subgroups of $St(a_i)$ such that $\bigcap_{i\in I,j\in J_i} core(K_{i,j})=\{ id_B\}$. Then, the pair $(X,r)$ given by $X:=\bigcup_{i\in I,j\in J_i} B/K_{i,j}$ and $\sigma_{x\circ K_{i_x,j_x}}(y\circ K_{i_y,j_y}):=\lambda_x(a_{i_x})\circ y \circ K_{i_y,j_y}$ give rise to an involutive solution such that $\mathcal{G}(X,r) \cong (B,\circ) $.\\
    Conversely, every involutive solutions $(X,r)$ with $\mathcal{G}(X,r) \cong B $ (as left braces) can be obtained in this way.
\end{theor}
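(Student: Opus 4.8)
The plan is to read the displayed formula as saying that $X$ is the disjoint union of the transitive $(B,\circ)$-sets $B/K_{i,j}$, on which $B$ acts by left $\circ$-translation, and that the assignment $\Phi\colon X\to B$, $\Phi(x\circ K_{i_x,j_x}):=\lambda_x(a_{i_x})$, is a ``colouring'' with $\sigma_{[x]}=L_{\Phi([x])}$, where $L_b$ denotes left $\circ$-multiplication by $b$. Both implications will be handled through this dictionary, the main tool being that for any involutive solution the map $x\mapsto\sigma_x$ is a morphism of solutions from $(X,r)$ to the canonical solution $(B,r_B)$ attached to the brace $B=\mathcal{G}(X,r)$; unwinding $r_B(\Phi\times\Phi)=(\Phi\times\Phi)r$ yields the equivariance relation $\sigma_{\sigma_x(y)}=\lambda_{\sigma_x}(\sigma_y)$, i.e. $\Phi(g\cdot x)=\lambda_g(\Phi(x))$ for all $g\in B$.

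For the direct statement I would first check that $\sigma$ is well defined. If $x'=x\circ k$ with $k\in K_{i_x,j_x}\le St(a_{i_x})$, then $\lambda_{x'}(a_{i_x})=\lambda_x\lambda_k(a_{i_x})=\lambda_x(a_{i_x})$ using $\lambda_{a\circ b}=\lambda_a\lambda_b$ and $\lambda_k(a_{i_x})=a_{i_x}$, so $\Phi$ does not depend on the representative $x$; independence of the representative of $y$ is immediate since $k\circ K_{i_y,j_y}=K_{i_y,j_y}$. Each $\sigma_{[x]}=L_{\Phi([x])}$ is a bijection, so it remains to produce an involutive non-degenerate solution with these $\sigma$'s. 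For this I would invoke the classical correspondence between non-degenerate cycle sets and involutive solutions (Rump) and verify the cycle-set axiom for $[x]\cdot[y]:=\sigma_{[x]}^{-1}([y])=\Phi([x])^{-1}\circ[y]$. Writing $p=\Phi([x])$, $q=\Phi([y])$ and using equivariance to compute $\Phi([x]\cdot[y])=\lambda_{p^{-1}}(q)$, the identity $([x]\cdot[y])\cdot([x]\cdot[z])=([y]\cdot[x])\cdot([y]\cdot[z])$ collapses to $p\circ\lambda_{p^{-1}}(q)=q\circ\lambda_{q^{-1}}(p)$, which holds because
\[
p\circ\lambda_{p^{-1}}(q)=p+\lambda_p\lambda_{p^{-1}}(q)=p+q=q+p=q\circ\lambda_{q^{-1}}(p),
\]
the commutativity of $(B,+)$ being the whole point. (The non-degeneracy of the resulting cycle set, i.e. bijectivity of the squaring map, is automatic in the finite case and should be recorded separately in general.)

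To identify the permutation brace, note that $b\mapsto L_b$ is a group homomorphism $(B,\circ)\to\Sym_X$ whose kernel is exactly $\bigcap_{i,j}core(K_{i,j})=\{id_B\}$, so it is injective. The generators $\sigma_{[x]}=L_{\Phi([x])}$ range over $\{L_b\mid b\in Y\}$ with $Y=\bigcup_{i\in I}B_i=\mathrm{Im}\,\Phi$, and since $Y$ is a union of $\lambda$-orbits that generates $(B,+)$ it also generates $(B,\circ)$; hence $\mathcal{G}(X,r)=L_B\cong(B,\circ)$. That the additive structures correspond as well (so the isomorphism is one of left braces) follows because the brace operations on $\mathcal{G}(X,r)$ are recovered intrinsically from the $\sigma$'s and transport to $+$, $\circ$ on $B$ under $L$.

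For the converse, let $(X,r)$ satisfy $\mathcal{G}(X,r)\cong B$ and let $B$ act on $X$ through this isomorphism. Put $\Phi(x)=\sigma_x$; the morphism property gives equivariance, so $\mathrm{Im}\,\Phi=Y$ is a $\lambda$-invariant generating set of $(B,\circ)$, hence a cycle base. Decomposing $X$ into $(B,\circ)$-orbits $X_\alpha$, choosing base points $x_\alpha$, and setting $a_{i(\alpha)}=\Phi(x_\alpha)$ and $K_{i(\alpha),j}=\mathrm{Stab}(x_\alpha)$, equivariance forces $\Phi(X_\alpha)=B_{i(\alpha)}$ and $\mathrm{Stab}(x_\alpha)\le St(a_{i(\alpha)})$, while faithfulness of the permutation action yields $\bigcap core(K_{i,j})=\{id_B\}$. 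Running the construction on this data returns $X=\bigsqcup_\alpha B/\mathrm{Stab}(x_\alpha)$ with the original $\sigma$'s, closing the argument. I expect the main obstacle to be precisely this converse bookkeeping: pinning down the morphism/equivariance relation, allowing several $(B,\circ)$-orbits to map to one $\lambda$-orbit (the role of the index set $J_i$), and checking that the reconstruction recovers the additive brace structure and not merely the multiplicative group.
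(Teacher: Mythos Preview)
The paper does not prove this theorem: it is quoted verbatim as \cite[Theorem~3.1]{bachiller2016solutions} and used as a black box throughout, so there is no ``paper's own proof'' to compare against. Your write-up is essentially a reconstruction of Bachiller's original argument, and the strategy is the right one: encode the data as a $B$-equivariant colouring $\Phi\colon X\to B$ with $\sigma_{[x]}=L_{\Phi([x])}$, verify the cycle-set identity via $p\circ\lambda_{p^{-1}}(q)=p+q$, and use injectivity of $b\mapsto L_b$ (from $\bigcap core(K_{i,j})=\{0\}$) together with the fact that a $\lambda$-invariant additive generating set also generates $(B,\circ)$ to identify $\mathcal{G}(X,r)$ with $B$.

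Two small points worth tightening. First, the claim that $Y$ generating $(B,+)$ implies it generates $(B,\circ)$ is correct but uses finiteness: from $h+y=h\circ\lambda_{h^{-}}(y)\in\langle Y\rangle_\circ$ whenever $h\in\langle Y\rangle_\circ$ and $y\in Y$, one gets only the additive \emph{submonoid} generated by $Y$ inside $\langle Y\rangle_\circ$; in a finite brace this is already all of $B$, but you should say so. Second, the line ``the brace operations on $\mathcal{G}(X,r)$ are recovered intrinsically from the $\sigma$'s'' is the crux of the brace (rather than group) isomorphism and deserves one explicit sentence: the defining relation $\lambda_{\sigma_{[x]}}(\sigma_{[y]})=\sigma_{\sigma_{[x]}([y])}$ in the permutation brace becomes $\lambda_{L_p}(L_q)=L_{\lambda_p(q)}$ under your identification, which is exactly what is needed.
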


\begin{rem}\label{remark1}
By the previous theorem, using the same notation, every solutions having $B$ as permutation left brace can be constructed choosing $I\subseteq K$ such that $<\{B_i\}_{i\in I}>_+=B $, $\mathcal{A}=\{a_i\}_{a_i\in B_i}$ and $\mathcal{K}=\{\{K_{i,j} \}_{j\in J_i}\}_{i\in I}$ a family of subgroups such that $K_{i,j}\subseteq St(a_i)$ and $\bigcap_{i\in I,j\in J_i} core(K_{i,j})=\{ id_B\}$. We indicate this solution by $(X_{B,I,\mathcal{A},\mathcal{K}},r)$. We remark that difference choices of $ I,\mathcal{A},\mathcal{K}$ could give isomorphic solutions: however, if $a_i$ and $a'_i$ are two elements of $B_i$, then they have the same additive order.
\end{rem}

\noindent If $(X,r)$ is an indecomposable involutive solution, \cref{costruz} can be simplified. 

\begin{prop}[Theorem 3, \cite{rump2020}]\label{cosmod}
Let $(B,+,\circ)$ be a left brace, $Y\subset B$ a transitive cycle base, $a_1\in Y$,
and $K \subset St(a_1)$ a core-free subgroup of $(B,\circ)$, contained in the
stabilizer of $a_1$. Then, the pair $(X,r)$ given by $X:=B/K$ and $\sigma_{x\circ K}(y\circ K):=\lambda_x(a_{1})\circ y \circ K$ give rise to an indecomposable involutive solution with $ \mathcal{G}(X,r)\cong (B,\circ)$.\\
 Conversely, every indecomposable involutive solutions $(X,r)$ with $\mathcal{G}(X,r) \cong B $ (as left braces) can be obtained in this way.
\end{prop}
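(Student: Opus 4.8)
The plan is to obtain both halves of the statement from the general construction \cref{costruz} by specialising its data to a single $\lambda$-orbit and a single stabiliser subgroup. Throughout, for $c\in B$ write $L_c$ for the permutation $y\circ K\mapsto c\circ y\circ K$ of the coset space $B/K$; the bookkeeping fact underlying everything is that, with the formula in the statement, each generator $\sigma_{x\circ K}$ of the permutation group is precisely the left translation $L_{\lambda_x(a_1)}$.

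For the direct implication, I would regard the datum $(Y,a_1,K)$ as an instance of \cref{costruz} in which the index set $I$ is a singleton, $B_1=Y$ is the chosen orbit, $a_1$ its distinguished element, and the family of subgroups collapses to the single group $K_{1,1}=K$. The hypotheses of \cref{costruz} are then immediate: $Y$ is a cycle base by assumption, $K\subseteq St(a_1)$ by assumption, and the condition $\bigcap core(K_{i,j})=\{\id_B\}$ reduces to $core(K)=\{\id_B\}$, which is exactly core-freeness of $K$. Thus \cref{costruz} already gives that $(X,r)$ with $X=B/K$ is an involutive solution with $\mathcal{G}(X,r)\cong(B,\circ)$. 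It then remains to verify indecomposability, i.e. transitivity on $B/K$. Since every generator $\sigma_{x\circ K}$ equals $L_{\lambda_x(a_1)}$ and, as $x$ runs over $B$, the elements $\lambda_x(a_1)$ exhaust the $\lambda$-orbit of $a_1$, which is the whole transitive cycle base $Y$, the group $\mathcal{G}(X,r)$ is generated by the translations $L_y$, $y\in Y$. As a cycle base generates the multiplicative group $(B,\circ)$, these realise the full left-regular coset action $L_{(B,\circ)}$, which is transitive on $B/K$. (In the finite case one may argue instead by cardinalities: $\mathcal{G}(X,r)\le L_{(B,\circ)}$, the map $L$ is injective exactly because $K$ is core-free, so $|\mathcal{G}(X,r)|=|B|=|L_{(B,\circ)}|$ forces equality.)

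For the converse I would start from an arbitrary indecomposable solution $(X,r)$ with $\mathcal{G}(X,r)\cong B$ and apply the converse half of \cref{costruz} to write it, up to isomorphism, as $X=\bigcup_{i\in I,\,j\in J_i}B/K_{i,j}$ with the stated $\sigma$-formula. The crucial observation is that this formula sends $y\circ K_{i_y,j_y}$ to $\lambda_x(a_{i_x})\circ y\circ K_{i_y,j_y}$, which again lies in the block $B/K_{i_y,j_y}$; hence every generator, and so all of $\mathcal{G}(X,r)$, preserves each piece of the partition of $X$ into the blocks $B/K_{i,j}$. Each block is non-empty and $\mathcal{G}(X,r)$-invariant, so transitivity (indecomposability) forces a single block, i.e. $|I|=1$ and $|J_1|=1$. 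Setting $Y:=B_1$, which is a single $\lambda$-orbit and hence a transitive cycle base, $a_1$ its distinguished element, and $K:=K_{1,1}\subseteq St(a_1)$ with $core(K)=\{\id_B\}$, recovers exactly the data and the coset description in the statement.

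The main obstacle, and the only genuine content beyond bookkeeping, is the block-invariance observation in the converse: it is what turns the potentially large disjoint union of \cref{costruz} into a single coset space, and it is the precise point at which indecomposability is invoked. On the direct side the one non-formal input is that a transitive cycle base generates $(B,\circ)$ — equivalently, that the translations by $Y$ already act transitively on $B/K$ — together with the identification of $core(K)=\{\id_B\}$ with core-freeness, which secures faithfulness of the coset action and thereby consistency with $\mathcal{G}(X,r)\cong(B,\circ)$.
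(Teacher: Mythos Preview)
The paper does not give its own proof of this proposition: it is quoted with attribution to \cite[Theorem~3]{rump2020} and no argument is supplied, so there is no in-paper proof to compare against. Your derivation as a specialisation of \cref{costruz} is correct and is a natural way to recover the result from the machinery already on the page: the block-invariance observation for the converse and the identification of the generators with left translations for transitivity are exactly the right reductions.

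One minor point worth tightening: when you assert that a cycle base generates $(B,\circ)$, the paper's definition only requires it to generate $(B,+)$. The passage is valid because a $\lambda$-invariant subset generates the same subgroup additively and multiplicatively (via $a\circ b=a+\lambda_a(b)$ and $a+b=a\circ\lambda_a^{-1}(b)$), but this deserves a sentence. Your parenthetical cardinality argument in the finite case sidesteps the issue cleanly.
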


\noindent In \cite{bachiller2016solutions} a tool to cut the isomorphic solutions was provided. For our scope, we report this result for indecomposable solutions. At first, recall that if $B$ is a left brace, a bijective map $\psi$ from $B$ to itself is said to be an \emph{automorphism} of left brace if $\psi\in Aut(B,+)\cap Aut(B,\circ)$.

\begin{prop}[Theorem 4.1, \cite{bachiller2016solutions}]\label{bachi}
Let $B$ be a left brace, $(X_1,r_1)$ and $(X_2,r_2)$ be two solutions constructed as before from $Y_1,a_1,K_1$ and
$Y_2,a_2,K_2$ respectively. Then $(X_1,r_1)$ and $(X_2,r_2)$ are isomorphic if and only if
there exist $z\in B$ and \(\psi\in Aut(B,+,\circ)\) such that \(\psi(a_1)=\lambda_z(a_2)\)
and \(\psi(K_1) = g\circ K_2\circ g^{-}\)
\end{prop}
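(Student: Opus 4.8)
The plan is to prove the two implications separately: the ``if'' direction as an explicit construction, and the ``only if'' direction by exploiting the brace automorphism that a solution isomorphism induces on the common permutation brace. Throughout I abbreviate cosets as \(\bar x:=x\circ K_1\) and \(\tilde x:=x\circ K_2\), I read the second condition with \(g=z\), i.e.\ \(\psi(K_1)=z\circ K_2\circ z^{-}\), and I think of \(z\) as a representative of the image of the identity coset. I will repeatedly use that a brace automorphism is \(\lambda\)-equivariant, \(\psi(\lambda_a(b))=\lambda_{\psi(a)}(\psi(b))\), and that \(\lambda\colon(B,\circ)\to\Aut(B,+)\) is a homomorphism, \(\lambda_a\lambda_b=\lambda_{a\circ b}\). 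I also record that, since \(r_1,r_2\) are involutive, \(\tau\) is determined by \(\sigma\) (explicitly \(\tau_y(x)=\sigma_{\sigma_x(y)}^{-1}(x)\)), so a bijection \(f\) is an isomorphism of solutions as soon as it intertwines the \(\sigma\)-maps, \(f(\sigma_{\bar x}(\bar y))=\sigma_{f(\bar x)}(f(\bar y))\).

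For the ``if'' direction, assume \(z\in B\) and \(\psi\in\Aut(B,+,\circ)\) satisfy \(\psi(a_1)=\lambda_z(a_2)\) and \(\psi(K_1)=z\circ K_2\circ z^{-}\), and define
\[
f\colon B/K_1\to B/K_2,\qquad f(x\circ K_1):=\psi(x)\circ z\circ K_2 .
\]
Well-definedness is the first check: if \(x'=x\circ k\) with \(k\in K_1\), then \(\psi(x')\circ z=\psi(x)\circ\psi(k)\circ z\), and \(\psi(k)\in z\circ K_2\circ z^{-}\) forces \(\psi(k)\circ z\in z\circ K_2\), so the two images coincide; injectivity is the same computation run backwards, and bijectivity follows. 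The core of this direction is the \(\sigma\)-intertwining. Expanding the left-hand side and using that \(\psi\) is multiplicative and \(\lambda\)-equivariant gives \(f(\sigma_{\bar x}(\bar y))=\lambda_{\psi(x)}(\psi(a_1))\circ\psi(y)\circ z\circ K_2\), while the right-hand side is \(\lambda_{\psi(x)\circ z}(a_2)\circ\psi(y)\circ z\circ K_2\). These agree because \(\lambda_{\psi(x)}(\psi(a_1))=\lambda_{\psi(x)}(\lambda_z(a_2))=\lambda_{\psi(x)\circ z}(a_2)\), the first equality being the hypothesis on \(a_1\) and the second the homomorphism property of \(\lambda\); note the two sides match on the nose, so no core-freeness is needed here.

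For the ``only if'' direction, let \(f\colon(X_1,r_1)\to(X_2,r_2)\) be an isomorphism. Since \(f\) intertwines the \(\sigma\)-maps, conjugation \(g\mapsto f\,g\,f^{-1}\) sends \(\sigma^{(1)}_{\bar x}\) to \(\sigma^{(2)}_{f(\bar x)}\) and is thus a group isomorphism \(\mathcal G(X_1,r_1)\to\mathcal G(X_2,r_2)\); because the additive operation of the permutation brace is intrinsic to the solution, it is in fact a brace isomorphism, and transporting it through the identifications \(\mathcal G(X_i,r_i)\cong(B,\circ)\) of \cref{cosmod} yields \(\psi\in\Aut(B,+,\circ)\), characterised at the group level by \(f\circ L_b=L_{\psi(b)}\circ f\), where \(L_b\) denotes left multiplication by \(b\) on cosets. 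Setting \(z\) to be a representative of \(f(K_1)\) and evaluating this relation at the identity coset gives at once \(f(x\circ K_1)=\psi(x)\circ z\circ K_2\). Well-definedness of this formula forces \(\psi(K_1)\subseteq z\circ K_2\circ z^{-}\), and injectivity of \(f\) gives the reverse inclusion, hence \(\psi(K_1)=z\circ K_2\circ z^{-}\). Finally the intertwining relation reads \(\lambda_{\psi(x)}(\psi(a_1))\circ w\circ K_2=\lambda_{\psi(x)\circ z}(a_2)\circ w\circ K_2\) for all \(w\in B\); cancelling the common right factor and letting \(w\) range over \(B\) puts \(\lambda_{\psi(x)\circ z}(a_2)^{-}\circ\lambda_{\psi(x)}(\psi(a_1))\) into \(\bigcap_{w}w\circ K_2\circ w^{-}=\operatorname{core}(K_2)=\{\id\}\), which is where core-freeness of \(K_2\) enters. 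Thus \(\lambda_{\psi(x)}(\psi(a_1))=\lambda_{\psi(x)\circ z}(a_2)\) for all \(x\), giving \(\psi(a_1)=\lambda_z(a_2)\).

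I expect the main obstacle to be the ``only if'' direction, specifically the clean justification that a solution isomorphism induces a brace (not merely a group) automorphism of the permutation brace, and the passage from the group-level relation \(f\circ L_b=L_{\psi(b)}\circ f\) to the normal form \(f(x\circ K_1)=\psi(x)\circ z\circ K_2\). Once these are in place, the remaining work — the well-definedness checks and the two \(\lambda\)-computations — is essentially one calculation read in both directions, with core-freeness of \(K_2\) supplying the cancellation that pins \(a_2\) down up to the \(\lambda_z\)-twist.
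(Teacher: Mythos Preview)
The paper does not supply a proof of this proposition: it is quoted verbatim as Theorem~4.1 of \cite{bachiller2016solutions} and used as a black box, so there is nothing in the present paper to compare your argument against.

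That said, your argument is the standard one and is essentially correct. A couple of minor remarks. First, you rightly read the stray \(g\) in the statement as \(z\); this is simply a typo in the paper. Second, the one step you yourself flag --- that a solution isomorphism \(f\) induces a \emph{brace} isomorphism \(\mathcal G(X_1,r_1)\to\mathcal G(X_2,r_2)\) via conjugation --- deserves one more line: since \(f\sigma^{(1)}_{\bar x}f^{-1}=\sigma^{(2)}_{f(\bar x)}\), conjugation is compatible with the generating map \(x\mapsto\sigma_x\), hence lifts to an isomorphism of the structure groups and descends to the permutation braces with their intrinsic additive structure. Once that is accepted, your identification of \(\psi\) and \(z\), the well-definedness/injectivity pair yielding \(\psi(K_1)=z\circ K_2\circ z^{-}\), and the use of core-freeness of \(K_2\) to extract \(\psi(a_1)=\lambda_z(a_2)\) are all clean and correct.
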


\begin{rem}\label{remark11}
If $(X,r)$ is an uniconnected solution, then \cref{cosmod} and \cref{bachi} furtherly simplify since in this case the core-free subgroup $K$ must be trivial.
\end{rem}

\section{Uniconnection of some indecomposable involutive solutions}

The goal of this section is showing that the the permutation group of some indecomposable solutions force them to be uniconnected. As a main result, we will show that left braces with cyclic additive group only provide uniconnected solutions.

\smallskip

\noindent We start by the following easy result. Recall that a group is said to be a \emph{Dedekind group} if all its subgroups are normal.

\begin{prop}\label{solded}
Let $(X,r)$ be an indecomposable solution such that $\mathcal{G}(X,r)$ is a Dedekind group. Then, $(X,r)$ is an uniconnected solution.
\end{prop}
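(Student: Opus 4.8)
The plan is to exploit the two defining features at hand: the permutation group $\mathcal{G}(X,r)$ is by construction a \emph{faithful} permutation group on $X$, and by indecomposability it acts \emph{transitively}; the Dedekind hypothesis will then force the point stabilizers to be trivial, which is exactly regularity.

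First I would fix $x_0\in X$ and set $H:=\{g\in\mathcal{G}(X,r) : g(x_0)=x_0\}$, the stabilizer of $x_0$. Since the action is transitive, $X$ is identified with the coset space $\mathcal{G}(X,r)/H$, and the solution is uniconnected precisely when this action is regular, that is, when $H=\{\id\}$. So it suffices to prove that $H$ is trivial.

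The key step is to identify the normal core of $H$. As $\mathcal{G}(X,r)\le\Sym_X$, the action on $X$ is faithful, so its kernel is trivial. But the kernel of a transitive action is the intersection of all point stabilizers, and since these are exactly the conjugates of $H$, this intersection equals $core(H)=\bigcap_{g\in\mathcal{G}(X,r)} gHg^{-1}$. Hence $core(H)=\{\id\}$, i.e.\ $H$ is core-free. Alternatively, one may realize $(X,r)$ through \cref{cosmod} from its own permutation left brace, in which case $H$ plays the role of the core-free subgroup $K$, and the statement to prove becomes $K=\{\id\}$ via \cref{remark11}.

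Finally I would invoke the Dedekind property: every subgroup of $\mathcal{G}(X,r)$ is normal, so $H\trianglelefteq\mathcal{G}(X,r)$ and therefore $core(H)=H$. Comparing with the previous step yields $H=\{\id\}$, so the transitive action is regular and $(X,r)$ is uniconnected. I do not anticipate a genuine obstacle here: the whole argument rests on the elementary identification of the kernel of a transitive action with the normal core of a point stabilizer, and the remainder is immediate from faithfulness together with the Dedekind hypothesis.
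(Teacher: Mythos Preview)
Your proof is correct and follows essentially the same approach as the paper: both arguments observe that the point stabilizer (equivalently, the subgroup $K$ in \cref{cosmod}) is core-free by faithfulness of the action, and then the Dedekind hypothesis forces it to be trivial. The paper phrases this in one line via \cref{cosmod}, which you also mention as an alternative route.
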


\begin{proof}
Clearly, $\mathcal{G}(X,r)$ can not have non-trivial core-free subgroup, hence the thesis follows by \cref{cosmod}.
\end{proof}

\noindent A left brace is said to be \emph{cyclic} if so is its additive group. This class of left braces are considered in several papers (see for example \cite{rump2007classification,rump2019classification}). 

\begin{theor}\label{solcyc}
Let $B$ be a finite cyclic left brace and $(X,r)$ be an indecomposable solution with $B$ as permutation left brace. Then, $(X,r)$ is an uniconnected solution.
\end{theor}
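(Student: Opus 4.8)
The plan is to reduce the statement, via \cref{cosmod} and \cref{remark11}, to a purely group-theoretic fact about the core-free subgroup appearing in Rump's construction. Concretely, write the given solution as $X=B/K$, where $K$ is a core-free subgroup of $(B,\circ)$ contained in the stabilizer $St(a_1)$ of some element $a_1$ of a transitive cycle base $Y\subset B$. The action of $\mathcal{G}(X,r)\cong(B,\circ)$ on $B/K$ is the left-translation action, which is regular exactly when $K$ is trivial; so, using \cref{remark11}, the solution is uniconnected precisely when $K=\{\id\}$. Thus the whole theorem amounts to showing that \emph{any} core-free $K\subseteq St(a_1)$ must be trivial once $(B,+)$ is cyclic.

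The decisive step — and the one I expect to carry the real content — is to pin down $St(a_1)$. Since $(B,+)\cong\mathbb{Z}/n\mathbb{Z}$, the group $\Aut(B,+)$ is abelian and every $\lambda_b$ acts as multiplication by a unit; hence the $\lambda$-orbit of $a_1$ consists of elements $u\,a_1$ with $u$ a unit, each of which generates the same cyclic subgroup $\langle a_1\rangle$ of $(B,+)$. Because $Y$ is a cycle base it must generate $(B,+)$, forcing $\langle a_1\rangle=B$, i.e. $a_1$ is an additive generator. Then $\lambda_b(a_1)=a_1$ is equivalent to $\lambda_b=\id$, so that $St(a_1)=\ker\lambda=Soc(B)$. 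This identification of the stabilizer with the socle is the crux of the argument: it is exactly what disposes of the apparent counterexamples, e.g. a left brace with $(B,+)=\mathbb{Z}/6\mathbb{Z}$ and $(B,\circ)\cong S_3$, where $S_3$ does possess nontrivial core-free subgroups but none of them lies inside the relevant stabilizer $Soc(B)$.

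Once $St(a_1)=Soc(B)$ is established, the conclusion is routine. The socle is an ideal, hence normal in $(B,\circ)$; moreover $\lambda_a=\id$ for $a\in Soc(B)$ gives $a\circ a'=a+a'$ on the socle, so $(Soc(B),\circ)$ coincides with the cyclic group $(Soc(B),+)$. A finite cyclic group has a unique subgroup of each order, hence every subgroup of $Soc(B)$ is characteristic in it; combined with the normality of $Soc(B)$ in $(B,\circ)$, this makes every subgroup of $Soc(B)$ normal in $(B,\circ)$. In particular $K$ is normal, so $core(K)=K$, and core-freeness forces $K=\{\id\}$; by \cref{remark11} the solution is uniconnected. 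The only genuinely delicate point is the orbit/generator argument yielding $St(a_1)=Soc(B)$; everything downstream is formal, and it is this argument that makes transparent that the hypothesis being used is the cyclicity of $(B,+)$ rather than that of $(B,\circ)$.
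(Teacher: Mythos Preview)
Your proof is correct and follows essentially the same approach as the paper's: reduce via \cref{cosmod} to showing that any core-free $K\subseteq St(a_1)$ is trivial, identify $St(a_1)$ with $Soc(B)$, and conclude using that $(Soc(B),\circ)=(Soc(B),+)$ is cyclic so all its subgroups are characteristic, hence normal in $(B,\circ)$. The only difference is cosmetic: where the paper invokes \cite[Lemma~3.2 and Proposition~3.3]{castelli2021classification} for the facts that $a_1$ is an additive generator and $St(a_1)=Soc(B)$, you supply direct arguments using that $\Aut(B,+)$ consists of unit multiplications on a cyclic group.
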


\begin{proof}
By \cref{cosmod}, $(X,r)$ can be constructed starting from a transitive cycle base $Y$, an element $a_1\in Y$ and a core-free subgroup $K$ of $B$ contained in $St(a_1)$. Moreover, by \cite[Lemma $3.2$]{castelli2021classification}, $a_1$ is an additive generator of $B$. Now, by \cite[Proposition $3.3$]{castelli2021classification} we have that $St(a_1)$ coincides with $Soc(B)$, therefore it is a normal subgroup of $(B,\circ)$ and, since $(Soc(B),\circ)=(Soc(B),+)$, we have that $(Soc(B),\circ)$ is cyclic. Moreover, being all the subgroups of $(Soc(B),\circ)$ characteristic in $Soc(B)$, we obtain that every multiplicative subgroup contained in $Soc(B) $ is a normal subgroup of $(B,\circ)$, hence $K$ must be trivial.
\end{proof}

\noindent In addition, cyclic left braces provide further solutions that are uniconnected by retractions.

\begin{cor}
    Let $B$ be a finite cyclic left brace and $(X,r)$ be an indecomposable solution with $B$ as permutation left brace. Then, $\Ret^n(X,r)$ is an uniconnected solution, for all $n\in \mathbb{N}\cup \{0\}$.
\end{cor}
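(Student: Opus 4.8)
The plan is to argue by induction on $n$, reducing each step to \cref{solcyc}. The base case $n=0$ is exactly \cref{solcyc}, since $\Ret^0(X,r)=(X,r)$ is indecomposable with cyclic permutation left brace $B$, hence uniconnected. The inductive invariant I want to propagate is slightly stronger than the statement itself: I would show that at each level $\Ret^n(X,r)$ is not merely uniconnected, but is an \emph{indecomposable} solution whose \emph{permutation left brace is again cyclic}. This stronger invariant is what feeds back into \cref{solcyc} at the next level, so that a single application of \cref{solcyc} per step suffices and the corollary follows by iteration.

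Concretely, suppose we have shown that $(Y,s):=\Ret^{n}(X,r)$ is an indecomposable solution with cyclic permutation left brace $\mathcal{G}(Y,s)$. The key structural input I would invoke is the standard fact that the canonical projection $\pi\colon Y\to Y/\!\sim_r$ onto the underlying set of $\Ret(Y,s)$ is a morphism of solutions, and that it induces a \emph{surjective} morphism of permutation left braces $\mathcal{G}(Y,s)\to \mathcal{G}(\Ret(Y,s))$, whose kernel is precisely $Soc(\mathcal{G}(Y,s))$; equivalently, $\mathcal{G}(\Ret(Y,s))$ is a quotient left brace of $\mathcal{G}(Y,s)$, namely $\mathcal{G}(Y,s)/Soc(\mathcal{G}(Y,s))$. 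Two consequences follow immediately. First, because $\mathcal{G}(Y,s)$ acts transitively on $Y$ and the action descends along $\pi$, the quotient group acts transitively on the quotient set, so $\Ret(Y,s)$ is again indecomposable. Second, the additive group of $\mathcal{G}(\Ret(Y,s))$ is a quotient of the cyclic additive group of $\mathcal{G}(Y,s)$, hence is itself cyclic; thus $\Ret(Y,s)$ has cyclic permutation left brace.

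With these two properties established, \cref{solcyc} applies directly to $\Ret(Y,s)=\Ret^{n+1}(X,r)$ and yields that it is uniconnected, while the computation above simultaneously verifies the inductive hypothesis (indecomposable with cyclic permutation left brace) at level $n+1$. This closes the induction and proves the statement for every $n\in\mathbb{N}\cup\{0\}$. No separate treatment of degenerate cases is needed, since \cref{solcyc} imposes no hypothesis beyond indecomposability and cyclicity of the permutation left brace, and a one-point solution is trivially uniconnected.

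The main obstacle is the structural identification $\mathcal{G}(\Ret(Y,s))\cong \mathcal{G}(Y,s)/Soc(\mathcal{G}(Y,s))$: this is exactly what simultaneously guarantees that indecomposability is preserved under retraction and that the cyclicity of the additive group is inherited. It is consistent with the fact, already recalled in \cref{costruz}'s surrounding discussion, that for the canonical solution $(B,r)$ the underlying set of $\Ret(B,r)$ is identified with $B/Soc(B)$. Once this identification is granted, the remaining steps are routine, as cyclic additive groups are closed under passing to quotients and \cref{solcyc} does the rest.
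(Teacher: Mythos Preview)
Your proof is correct and follows essentially the same approach as the paper: induction on $n$, base case via \cref{solcyc}, and inductive step via the identification $\mathcal{G}(\Ret(Y,s))\cong \mathcal{G}(Y,s)/Soc(\mathcal{G}(Y,s))$ (which the paper cites from \cite[Theorem 3.5]{castelli2021classification}), using that a quotient of a cyclic additive group is cyclic. Your version is more explicit about preservation of indecomposability under retraction, which the paper leaves implicit, but the argument is the same.
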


\begin{proof}
    If $n=0$ the thesis follows by \cref{solcyc}. Since by \cite[Theorem 3.5]{castelli2021classification}, the permutation left brace of $\Ret(X,r)$ is the factor left brace $B/Soc(B)$, that clearly has cyclic additive group, the thesis follows by an easy induction on $n$.
\end{proof}

\noindent In general, the previous result can not be extended if $B$ has not cyclic additive group, as we can see in the following example.

\begin{ex}
Let $(X,r)$ be the involutive solution given by $X:=\{1,2,3,4,5,6,7,8\}$, $\sigma_1=\sigma_2:=(1,2)(3,5)(4,7)(6,8)$, $\sigma_3=\sigma_8:=(1,6,4,3)(2,5,7,8)$, $\sigma_4=\sigma_7:=(1,3,4,6)(2,8,7,5)$, $\sigma_5=\sigma_6:=(1,7)(2,4)(3,8)(5,6) $ and $\tau_y(x):=\sigma^{-1}_{\sigma_x(y)}(y)$ for all $x,y\in X$. Then, $(X,r)$ is an uniconnected solution, but $\Ret(X,r)$ is not uniconnected.
\end{ex}

\noindent In this way, indecomposable solutions with a Z-group permutation group are uniconnected, with one exception.

\begin{cor}\label{solZ}
Let $(X,r)$ be an indecomposable solution with a Z-group permutation group $\mathcal{G}(X,r)$ having no Sylow $2$-subgroups of size $4$. Then, $(X,r)$ is an uniconnected solution.
\end{cor}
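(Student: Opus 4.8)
The plan is to reduce the statement to \cref{solcyc} by proving that, under the stated hypothesis, the permutation left brace $B=\mathcal{G}(X,r)$ must have cyclic additive group. Indecomposability gives that $(B,\circ)$ acts transitively, and by assumption $(B,\circ)$ is a $Z$-group whose (necessarily cyclic) Sylow $2$-subgroup is not of order $4$. Since $(B,+)$ is a finite abelian group, it is cyclic precisely when each of its Sylow subgroups $(B_p,+)$ is cyclic; so I would argue one prime at a time and then apply \cref{solcyc} directly, with no need to analyse core-free subgroups by hand.

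First I would exploit the Sylow structure already recorded in the preliminaries. For each prime $p$, the additive Sylow subgroup $(B_p,+)$ is a left ideal of $B$, hence $(B_p,\circ)$ is a subgroup of $(B,\circ)$, and $(B_p,+,\circ)$ is itself a left brace of $p$-power order. Because $|B_p|$ equals the full $p$-part of $|B|=|(B,\circ)|$, the subgroup $(B_p,\circ)$ is a Sylow $p$-subgroup of $(B,\circ)$; as $(B,\circ)$ is a $Z$-group, all its Sylow subgroups are cyclic, so $(B_p,\circ)$ is cyclic. Thus each Sylow factor $(B_p,+,\circ)$ is a $p$-brace with cyclic multiplicative group.

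Next I would pass from a cyclic multiplicative group to a cyclic additive group on each factor. For odd $p$, Rump's classification of left braces with cyclic multiplicative group forces $(B_p,+,\circ)$ to be of the form $C(p,n,t)$, whence $(B_p,+)\cong\mathbb{Z}/p^n\mathbb{Z}$ is cyclic. For $p=2$ the same implication holds as soon as $|B_2|\neq 4$, the order-$4$ situation being the only exception; and this exception is exactly what the hypothesis removes, so $(B_2,+)$ is cyclic as well. Combining all primes, $(B,+)$ is cyclic, so $B$ is a finite cyclic left brace and \cref{solcyc} yields that $(X,r)$ is uniconnected.

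The hard part, and the reason for the precise hypothesis, is the prime $2$ in order $4$: here the implication ``cyclic multiplicative group $\Rightarrow$ cyclic additive group'' genuinely fails, as witnessed by the left brace with $(B,+)\cong\mathbb{Z}/2\mathbb{Z}\times\mathbb{Z}/2\mathbb{Z}$ and $(B,\circ)\cong\mathbb{Z}/4\mathbb{Z}$. For every Sylow order other than $4$ at the prime $2$ the reduction above is clean, so the only point requiring care is verifying that the order-$4$ anomaly is the sole obstruction; once the Sylow $2$-subgroup is excluded from having order $4$, the cyclicity of $(B,+)$ is restored and the conclusion is immediate from \cref{solcyc}.
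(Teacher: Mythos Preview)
Your proposal is correct and follows essentially the same route as the paper: reduce to \cref{solcyc} by establishing that the permutation left brace has cyclic additive group. The paper does this in one line by invoking \cite[Corollary~2]{rump2019classification} as a black box, whereas you unpack that result Sylow-by-Sylow (using that each additive Sylow subgroup is a left ideal, hence a sub-brace with cyclic multiplicative group, and then applying Rump's classification of such $p$-braces, with the order-$4$ anomaly at $p=2$ explicitly accounted for).
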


\begin{proof}
By \cite[Corollary 2]{rump2019classification}, the permutation left brace of $(X,r)$ is a cyclic left brace. Then, the thesis follows by \cref{solcyc}.
\end{proof}

\noindent In \cite[Conjecture 6.9]{ramirez2022indecomposable} the second author conjectured that if $(X,r)$ is an indecomposable involutive solutions such that the permutation group $\mathcal{G}(X,r)$ is isomorphic to a dihedral group, then it is uniconnected. As an application of the previous results, we show the conjecture for several values of $n$.

\begin{cor}\label{uniconndihe}
Let $(X,r)$ be an indecomposable solution such that the permutation group $\mathcal{G}(X,r)$ is isomorphic to the dihedral group having order $2n$ for an odd number $n$. Then, $(X,r)$ is an uniconnected solution.
\end{cor}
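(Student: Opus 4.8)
The plan is to reduce the statement directly to \cref{solZ}. It is tempting to argue purely at the group level as in \cref{solded}, but this approach fails here: the dihedral group $D$ of order $2n$ is not a Dedekind group, and in fact its reflection subgroups of order $2$ are non-trivial core-free subgroups. Consequently the conclusion cannot be extracted from the abstract group structure alone and must exploit the additional left-brace structure on $\mathcal{G}(X,r)$, which is precisely what \cref{solZ} packages (via the cyclicity of the permutation left brace that is guaranteed when the permutation group is a Z-group).

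Accordingly, the only thing left to verify is the purely group-theoretic fact that $D$ satisfies the hypotheses of \cref{solZ}: namely that it is a Z-group and that its Sylow $2$-subgroups are not of size $4$. First I would recall that a Z-group is a finite group all of whose Sylow subgroups are cyclic, and check this for $D$. Since $n$ is odd, the $2$-part of $|D|=2n$ equals $2$, so every Sylow $2$-subgroup of $D$ has order $2$; in particular it is cyclic and is certainly not of size $4$, which is exactly the extra hypothesis required by \cref{solZ}. For an odd prime $p\mid n$, I would invoke the normal cyclic subgroup $R\cong C_n$ of rotations, of index $2$ in $D$: every element of $D\setminus R$ is a reflection of order $2$, so any $p$-element of $D$ must lie in $R$. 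Hence each Sylow $p$-subgroup of $D$ is contained in $R$, and since $R$ is cyclic it is itself cyclic.

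Having established that $\mathcal{G}(X,r)\cong D$ is a Z-group whose Sylow $2$-subgroups have order $2$, the conclusion is immediate from \cref{solZ}: the solution $(X,r)$ is uniconnected. I do not expect a genuine obstacle once this viewpoint is adopted, since the Sylow computation is elementary and the final appeal is a single line. The one point that genuinely matters conceptually is recognising that the result is \emph{false} at the level of abstract groups, and becomes true only because the Z-group hypothesis forces the permutation left brace to be cyclic; at that stage \cref{solcyc} (used inside \cref{solZ}) places the relevant stabiliser inside the cyclic socle, whose subgroups are characteristic and hence normal in the multiplicative group, thereby forcing the core-free subgroup $K$ to be trivial.
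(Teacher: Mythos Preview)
Your proof is correct and follows exactly the paper's approach: verify that a dihedral group of order $2n$ with $n$ odd is a Z-group whose Sylow $2$-subgroups have order $2$ (hence not $4$), and then invoke \cref{solZ}. The only difference is that you spell out the Sylow computation in detail and add the helpful remark that the Dedekind-group argument of \cref{solded} cannot work here, whereas the paper compresses the whole thing into a single sentence.
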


\begin{proof}
    Since $n$ is an odd number, a dihedral group of size $2n$ is a Z-group with no Sylow subgroups having order $4$, hence the thesis follows by \cref{solZ}.
\end{proof}

\noindent In full generality Conjecture $6.9$ of \cite{ramirez2022indecomposable} is false. Indeed, in \cite[Example 1.2]{vendramin2016extensions} Vendramin exhibited an indecomposable solution having size $4$ with permutation group isomorphic to the dihedral group having $8$ elements.

\begin{ex}
Let $X:=\{1,2,3,4\}$ and $r$ be the solution given by $\sigma_1:=(3\;4)$, $\sigma_2:=(1\;3\;2\;4)$, $\sigma_3:=(1\;4\;2\;3)$, $\sigma_4:=(1\;2)$ and $\tau_y(x):=\sigma^{-1}_{\sigma_x(y)}(x)$, for all $x,y\in X$. Then, $(X,r)$ is an indecomposable solution having $4$ element such that $\mathcal{G}(X,r)$ is isomorphic to the dihedral group of size $8$.
\end{ex}

\noindent Inspecting all the involutive solutions of size $\leq 9$ by GAP \cite{Ve15pack}, in addition to the previous example we find a further indecomposable involutive solution having size $4$ and with permutation group isomorphic to the dihedral group having order $8$. These are the only counterexamples to \cite[Conjecture 6.9]{ramirez2022indecomposable} among the involutive solutions having size $\leq 9$.

\smallskip 

By a further inspection of involutive solutions of size $8$, we find an indecomposable involutive solution with permutation group isomorphic to a quaternion $2$-group. This solution is uniconnected. This is not a special case, as the following result show.


\begin{prop}\label{solqua}
Let $(X,r)$ be an indecomposable solution such that $\mathcal{G}(X,r)$ is isomorphic to a generalized quaternion $2$-group. Then, $(X,r)$ is an uniconnected solution.
\end{prop}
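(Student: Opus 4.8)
The plan is to reduce the whole statement to a purely group-theoretic fact about the multiplicative group, using the construction machinery already in place. Writing $B$ for the permutation left brace of $(X,r)$, so that $(B,\circ)\cong\mathcal{G}(X,r)$ is a generalized quaternion $2$-group, \cref{cosmod} tells us that $(X,r)$ arises from a transitive cycle base $Y$, an element $a_1\in Y$, and a core-free subgroup $K\subseteq St(a_1)$ of $(B,\circ)$; and by \cref{remark11} the solution is uniconnected precisely when this $K$ is forced to be trivial. Thus the entire proof amounts to showing that the \emph{only} core-free subgroup of a generalized quaternion $2$-group is the trivial one.

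The key observation I would exploit is the defining structural feature of generalized quaternion $2$-groups: they possess a \emph{unique} element of order $2$, which generates the center $Z$ (of order $2$). First I would recall that, since $(B,\circ)$ is a finite $2$-group, every non-trivial subgroup $K$ contains an element of order $2$ by Cauchy's theorem. By the uniqueness just mentioned, this forces $Z\subseteq K$ for every non-trivial $K$. Since $Z$ is the center, it is normal in $(B,\circ)$, and being a normal subgroup contained in $K$ it lies inside $\mathrm{core}(K)$; hence $\mathrm{core}(K)\supseteq Z\neq\{\mathrm{id}\}$, so $K$ is not core-free. Consequently the trivial subgroup is the unique core-free subgroup, and the conclusion follows from \cref{cosmod} together with \cref{remark11}.

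I do not expect a genuine obstacle here: once the reduction via \cref{cosmod} is made, the argument is essentially a one-line consequence of the unique-involution property. The only subtlety worth flagging explicitly is that this proposition does \emph{not} follow from \cref{solded}, because a generalized quaternion $2$-group is a Dedekind group only in the smallest case $Q_8$; for $Q_{2^n}$ with $n\ge 4$ there exist non-normal (cyclic) subgroups, so the Dedekind criterion fails. The present argument is sharper precisely because it does not require \emph{all} subgroups to be normal, only that every non-trivial subgroup meets the center in a non-trivial (hence automatically normal) subgroup. I would therefore state this centrality observation as the heart of the proof and keep the verification of the unique-involution property as a standard citation rather than reproving it.
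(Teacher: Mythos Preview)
Your proof is correct and actually more self-contained than the paper's. The paper argues by contradiction using a minimal faithful permutation degree result: if $(X,r)$ were not uniconnected, the non-trivial core-free subgroup $K$ would give a faithful action on cosets, hence an embedding of $\mathcal{G}(X,r)$ into $\Sym(n)$ for some $n<|\mathcal{G}(X,r)|$, contradicting a theorem of Johnson stating that a generalized quaternion $2$-group has minimal faithful permutation degree equal to its order. Your argument instead unpacks the group-theoretic content directly, via the unique involution property: every non-trivial subgroup of a $2$-group contains an involution, and in $Q_{2^n}$ the only involution generates the center, so every non-trivial subgroup has non-trivial (normal) core. This is essentially the proof of Johnson's result specialized to the quaternion case, so the two approaches have the same mathematical substance; yours avoids an external citation at the cost of one extra line, while the paper's version outsources the key step. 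Your remark that \cref{solded} does not cover $Q_{2^n}$ for $n\ge 4$ is also apt and worth keeping.
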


\begin{proof}
If $(X,r)$ is not uniconnected, there exist a non-trivial core-free subgroup of $\mathcal{G}(X,r)$. This implies that there exist an embedding of $\mathcal{G}(X,r)$ into $Sym(n)$, for some $n<|\mathcal{G}(X,r)|$, but this contradicts \cite[Theorem 1]{johnson1971minimal}.
\end{proof}

\noindent Even if \cite[Conjecture 6.9]{ramirez2022indecomposable} in full generality is false, in \cite[Theorem 6.1]{ramirez2022indecomposable} the second author showed that the possible sizes of an indecomposable involutive solution $(X,r)$ with a dihedral permutation group $\mathcal{G}(X,r)$ of size $2n$ are only $n$ and $2n$. The same result follows if $\mathcal{G}(X,r)$ is a \emph{generalized dihedral group}, where a group $H$ is said to be a generalized dihedral group if it is a semidirect product $A\rtimes \mathbb{Z}/2\mathbb{Z}$ for some abelian group $A$, where $\mathbb{Z}/2\mathbb{Z}$ acts on $A$ by inversion (of course, dihedral groups are generalized dihedral groups).

\begin{prop}\label{soldihe}
Let $(X,r)$ be an indecomposable solution such that $\mathcal{G}(X,r)$ is isomorphic to a generalized dihedral group of size $2n$. Then, $|X|=2n$ or $|X|=n$.
\end{prop}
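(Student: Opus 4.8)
The plan is to reduce the statement to a purely group-theoretic fact about core-free subgroups of generalized dihedral groups. Since $(X,r)$ is indecomposable, its permutation left brace $B$ satisfies $(B,\circ)\cong G$, where $G$ denotes the generalized dihedral group of order $2n$, and by \cref{cosmod} the solution is of the form $X=B/K$ for a suitable core-free subgroup $K$ of $(B,\circ)$. Hence $|X|=[G:K]=2n/|K|$, and the assertion $|X|\in\{n,2n\}$ becomes equivalent to the claim that every core-free subgroup $K$ of $G$ has order at most $2$. This last claim is the heart of the matter.

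To bound $|K|$, I would write $G=A\rtimes\langle t\rangle$ with $A$ abelian of order $n$, with $t$ an involution acting on $A$ by inversion, and with $A\trianglelefteq G$ of index $2$. The key object is the intersection $K\cap A$, which is a subgroup of $A$. I claim it is normal in all of $G$: it is centralized (hence normalized) by $A$ because $A$ is abelian, and it is normalized by $t$ because inversion carries any subgroup of $A$ onto itself, as $\{x^{-1}:x\in K\cap A\}=K\cap A$. Since $G=\langle A,t\rangle$, this yields $K\cap A\trianglelefteq G$. Being a normal subgroup of $G$ contained in $K$, it must lie inside $core_G(K)=\{1\}$, whence $K\cap A=\{1\}$.

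It then follows immediately that $K$ maps injectively into the quotient $G/A\cong\mathbb{Z}/2\mathbb{Z}$, so that $|K|\le 2$. Consequently $|X|=2n/|K|$ equals $2n$ when $K$ is trivial and equals $n$ when $|K|=2$, which is exactly the desired conclusion. Note that the stabilizer condition $K\subseteq St(a_1)$ appearing in \cref{cosmod} is not needed for this direction: core-freeness alone forces the bound.

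I expect the only genuinely delicate point to be the normality of $K\cap A$ in $G$, and this is precisely where the generalized dihedral hypothesis is used. For an arbitrary semidirect product $A\rtimes C$ a subgroup of $A$ need not be $C$-invariant, so the argument would break down; it is the inversion action that makes \emph{every} subgroup of $A$ invariant automatically, and hence makes $K\cap A$ normal. Everything else is routine bookkeeping through \cref{cosmod}.
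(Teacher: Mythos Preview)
Your argument is correct. The paper itself does not spell out a proof but only points to \cite[Theorem~6.1]{ramirez2022indecomposable} and says the same idea works; your reduction via \cref{cosmod} to the group-theoretic fact that every subgroup of the abelian part $A$ is normal in $G=A\rtimes\langle t\rangle$ (because inversion stabilizes every subgroup), forcing any core-free $K$ to have order at most $2$, is exactly the natural argument and is presumably what is meant.
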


\begin{proof}
It is the same idea of \cite[Theorem 6.1]{ramirez2022indecomposable}.
\end{proof}

\section{Some classification results}

In this section, we extend the results obtained in \cite{ramirez2022indecomposable} on indecomposable involutive solutions with permutation group having size $pq$ and $p^2q$ in two direction. In the first one, we classify indecomposable involutive solutions with dihedral permutation group and size $2n$, for an arbitrary odd number $n$. In the second one, we classify the indecomposable involutive solutions  having a permutation group of square-free size.

\smallskip

\noindent At first, we need a couple of lemma.

\begin{lemma}\label{autcyc}
Let $B$ be a cyclic left brace of size $p^k$, for some odd prime number $p$. Then, $B$ has an automorphism $\alpha$ of order $2$ if and only if $B$ is a trivial cyclic left brace. In this case, $\alpha$ is given by $\alpha(a):=-a$ for all $a\in B$.
\end{lemma}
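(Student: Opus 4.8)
The plan is to first pin down the additive part of any order-$2$ automorphism, and then to test exactly when that single candidate map is multiplicative.

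First I would use that a left brace automorphism is, by definition, an element of $\Aut(B,+)\cap\Aut(B,\circ)$, so an automorphism $\alpha$ of order $2$ is in particular an involution of $(B,+)$. Since $(B,+)\cong\mathbb{Z}/p^k\mathbb{Z}$ we have $\Aut(B,+)\cong(\mathbb{Z}/p^k\mathbb{Z})^{\times}$, which for $p$ odd is cyclic of even order $p^{k-1}(p-1)$ and hence contains a unique involution, namely $-\id$ (note $-\id\neq\id$ because $p^k\geq 3$). Thus necessarily $\alpha(a)=-a$, which already establishes the last sentence of the statement and reduces the whole lemma to deciding when the single map $a\mapsto -a$ is a brace automorphism.

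Next I would translate multiplicativity of $a\mapsto -a$ into a condition on the $\lambda$-maps. Writing $a\circ b=a+\lambda_a(b)$ and using that each $\lambda_a$ is additive, the requirement $-(a\circ b)=(-a)\circ(-b)$ becomes $-a-\lambda_a(b)=-a-\lambda_{-a}(b)$, i.e.\ $\lambda_a=\lambda_{-a}$ for all $a\in B$. Identifying $(B,+)$ with $\mathbb{Z}/p^k\mathbb{Z}$, every $\lambda_a$ is multiplication by a unit $\mu(a)\in(\mathbb{Z}/p^k\mathbb{Z})^{\times}$, and $\mu\colon(B,\circ)\to(\mathbb{Z}/p^k\mathbb{Z})^{\times}$ is a group homomorphism with kernel $Soc(B)$; the condition then reads $\mu(a)=\mu(-a)$ for all $a$. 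The easy direction is immediate: if $B$ is trivial then $\circ=+$, every $\lambda_a=\id$, and $a\mapsto -a$ is visibly an order-$2$ brace automorphism.

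For the converse I would argue by contradiction, and this is the step I expect to carry the real content. Suppose $B$ is non-trivial, so $Soc(B)$ is a proper additive subgroup of the cyclic group $(B,+)$; pick an additive generator $g$, which $Soc(B)$ cannot contain, so $u:=\mu(g)\neq 1$. Since $p$ is odd, $2$ is invertible modulo $p^k$, hence $c:=-2u^{-1}g$ is again an additive generator, and a direct check gives $g\circ c=g+u\,c=g-2g=-g$. Applying $\mu$ together with $\mu(-g)=\mu(g)$ yields $u=\mu(-g)=\mu(g)\mu(c)=u\,\mu(c)$, whence $\mu(c)=1$, i.e.\ $c\in Soc(B)$; but $c$ generates $(B,+)$, forcing $Soc(B)=B$ and contradicting non-triviality. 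The main obstacle is the interplay between the two group structures: one must exploit that $Soc(B)$ is simultaneously the kernel of the multiplicative homomorphism $\mu$ and an additive subgroup (so that containing a generator makes it everything), and it is precisely the oddness of $p$ --- through the invertibility of $2$ and the uniqueness of the additive involution --- that closes the argument. (Alternatively, one could invoke the classification of cyclic left braces to reduce to $B\cong C(p,k,t)$ and compute directly that $(-a)\circ(-b)=-(a\circ b)+2p^t ab$, which vanishes for all $a,b$ iff $t=k$, i.e.\ iff $B$ is trivial; the argument above is self-contained.)
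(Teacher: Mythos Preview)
Your argument is correct. Both you and the paper begin identically: since $\Aut(\mathbb{Z}/p^k\mathbb{Z},+)$ is cyclic of even order for $p$ odd, the only additive involution is $-\id$, so the question reduces to when $a\mapsto -a$ respects $\circ$.

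From there the two proofs diverge. The paper invokes the classification of cyclic left braces of prime-power order, writes $B=C(p,k,t)$ with $a\circ b=a+b+p^tab$, expands $(-a)\circ(-b)=-(a\circ b)$ to get $2p^tab\equiv 0$, and plugs in $a=b=1$ to force $t=k$; this is exactly the alternative you sketch in your final parenthesis. Your main argument instead stays classification-free: you encode $\lambda$ as a multiplicative character $\mu$ into $(\mathbb{Z}/p^k\mathbb{Z})^\times$ with kernel $Soc(B)$, use invertibility of $2$ to manufacture a $c$ with $g\circ c=-g$, and conclude that the condition $\mu(g)=\mu(-g)$ forces an additive generator into $Soc(B)$. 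The paper's route is shorter and more concrete once one accepts the $C(p,k,t)$ description; yours is more structural, isolates precisely where oddness of $p$ enters (both the uniqueness of the involution and the invertibility of $2$), and would transport to any cyclic left brace of odd order without relying on the Rump classification.
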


\begin{proof}
If $B$ is a trivial cyclic left brace, $\alpha$ is clearly one of its automorphisms. Conversely, suppose that $B=C(p,k,t)$ for some $k,t$. If $\alpha$ is an automorphism of order $2$ then in particular is an element of $Aut(B,+)$, hence it must be the function given by $\alpha(a):=-a$ for all $a\in B$. Moreover, we have that $-(a\circ b)=(-a)\circ (-b)$ for all $a,b\in B$, therefore $-a-b-p^t ab=-a-b+p^tab$ for all $a,b\in B$. If we set $a=b=1$, since $p$ is odd we obtain that $p^t$ is equal to $0$ module $p^k$, hence the thesis.
\end{proof}

\begin{lemma}\label{brdihedral}
Let $B$ be a cyclic left brace of size $2n$, for some odd number $n$. Moreover, suppose that $(B,\circ)$ is isomorphic to a dihedral group. Then, $B$ is the semidirect product of the trivial cyclic left brace of size $2$ and the trivial cyclic left brace of size $n$.
\end{lemma}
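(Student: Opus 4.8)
The plan is to split $B$ along the additive Sylow decomposition and to recognise the two pieces as trivial braces whose $\lambda$-interaction rebuilds the dihedral multiplicative group. Since $(B,+)\cong\mathbb{Z}/2n\mathbb{Z}$ with $n$ odd, write $(B,+)=B_2\oplus N$, where $B_2$ is the Sylow $2$-subgroup (order $2$) and $N$ is the odd part (order $n$, cyclic); both are left ideals, being additive Sylow subgroups, and $B_2\cap N=\{0\}$. Let $c$ be the unique element of additive order $2$, so $B_2=\{0,c\}$; as $(B_2,\circ)$ is a subgroup of order $2$ we get $c\circ c=0$ and $o(c)_\circ=2$. The subgroup $(N,\circ)$ of the dihedral group $(B,\circ)$ has order $n$, and since $n$ is odd the only subgroup of order $n$ of a dihedral group of order $2n$ is the (normal, cyclic) rotation subgroup; hence $N$ is an ideal of $B$, and $c$, lying outside $N$, is a reflection.

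First I would pin down how $c$ acts. Since $B_2$ is a left ideal of order $2$ and each $\lambda_\rho$ is an additive automorphism, $\lambda_\rho(c)=c$ for every $\rho\in N$. Using $a\circ b=a+\lambda_a(b)$, the defining brace identity, $\lambda_\rho(c)=c$, and $c\circ c=0$, a short computation gives
\[
c\circ\rho\circ c=\lambda_c(\rho)\qquad(\rho\in N).
\]
As $c^{-}=c$, the left-hand side is conjugation of $\rho$ by $c$, so $\lambda_c|_N$ is simultaneously the restriction of an inner automorphism of $(B,\circ)$ (hence a multiplicative automorphism of the normal $N$) and an additive automorphism; thus $\lambda_c|_N$ is a brace automorphism, and $\lambda_c^2=\lambda_{c\circ c}=\mathrm{id}$ shows its order divides $2$. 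On the other hand, conjugating a rotation by a reflection inverts it, so the same identity gives $\lambda_c(\rho)=\rho^{-}$, the $\circ$-inverse of $\rho$.

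The crux — and the step I expect to be the main obstacle — is proving that $N$ is a \emph{trivial} brace; this is exactly where \cref{autcyc} enters. Decompose $N=\bigoplus_{p\mid n}B_p$ into its additive primary components, each a cyclic left brace of odd prime-power order $p^{k}$ and each preserved by $\lambda_c$ (both additively, as a characteristic subgroup, and multiplicatively, since $(B_p,\circ)$ is the unique order-$p^{k}$ subgroup of the cyclic $(N,\circ)$, hence normal in $(B,\circ)$). If $\lambda_c|_{B_p}$ were the identity, then $\rho=\lambda_c(\rho)=\rho^{-}$ for all $\rho\in B_p$, forcing every element of the odd-order group $(B_p,\circ)$ to be an involution and hence $B_p=\{0\}$. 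So for each nontrivial $B_p$ the restriction $\lambda_c|_{B_p}$ is a brace automorphism of order exactly $2$, and \cref{autcyc} forces $B_p$ to be the trivial cyclic brace with $\lambda_c|_{B_p}(a)=-a$. Consequently $N$ is trivial and $\lambda_c|_N$ is the additive inversion $a\mapsto -a$.

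Finally I would assemble the semidirect product. The brace $B_2$ of order $2$ is automatically trivial, $N$ is a trivial ideal of order $n$, and additively $B=B_2\oplus N$, while multiplicatively $(B,\circ)=(B_2,\circ)\ltimes(N,\circ)$ with $B_2\cap N=\{0\}$ by coprimality of orders. The entire $\lambda$-action is then determined by $\lambda$ being trivial on each factor separately, $\lambda_\rho(c)=c$ for $\rho\in N$, and $\lambda_c|_N=(a\mapsto -a)$; these are precisely the structure constants of the semidirect product of the trivial brace of size $2$ acting by inversion on the trivial brace of size $n$, which recovers the dihedral multiplicative group and proves the claim.
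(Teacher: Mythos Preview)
Your proof is correct and follows essentially the same strategy as the paper's: isolate the odd-order part $N$ and apply \cref{autcyc} to each of its primary components $B_p$ to force triviality. The only difference is that the paper obtains the semidirect decomposition $B\cong B_n\rtimes B_2$ by citing Rump's classification of cyclic braces, whereas you derive it by hand from the additive Sylow splitting together with the identity $c\circ\rho\circ c=\lambda_c(\rho)$; you also make explicit why $\lambda_c|_{B_p}$ has order exactly~$2$ on each nontrivial $B_p$ (a point the paper leaves implicit in its appeal to \cref{autcyc}). Your route is thus slightly more self-contained, but the core argument is the same.
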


\begin{proof}
By \cite[Theorem 2 and Corollary 2]{rump2019classification}, $B$ is the semidirect product of the trivial cyclic left brace of size $2$ and a cyclic left brace of size $n$. Since $\alpha(B_p)=B_p$ for every divisor $p$ of $n$, every Sylow subgroup $B_p$ of $(B,+)$ and $\alpha\in Aut(B,+,\circ)$, by \cref{autcyc} the left brace of size $n$ must be trivial.
\end{proof}

\noindent Now we are able to give the first main result of the section.

\begin{theor}\label{soldihe3.3}
Let $n$ be an odd number. Then, there exist a unique indecomposable solution such that  the permutation group is isomorphic to the dihedral group of order $2n$. Moreover, such a solution is uniconnected.
\end{theor}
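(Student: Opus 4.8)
The plan is to reduce the statement to a counting problem over cyclic left braces and then resolve it in two stages: first pin down the permutation left brace up to isomorphism, and then show the solution it carries is essentially unique. The uniconnectedness asserted in the ``moreover'' is immediate from \cref{uniconndihe}, since $n$ is odd; consequently $|X|=|\mathcal{G}(X,r)|=2n$, and the permutation left brace $B$ satisfies $(B,\circ)\cong$ dihedral group of order $2n$, with rotation subgroup denoted $C_n$.

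The first real step is to prove that $(B,+)$ is cyclic. For each prime $p$, the Sylow subgroup $(B_p,+)$ of the additive group is a left ideal, so $(B_p,\circ)$ is a subgroup of $(B,\circ)$. If $p$ is odd and $p^{a_p}$ is the exact power of $p$ dividing $n$, then $(B_p,\circ)$ has order $p^{a_p}$; as $n$ is odd, every such subgroup of the dihedral group lies in $C_n$ and is therefore cyclic. Thus $B_p$ is a left brace with cyclic multiplicative group, whence $B_p\cong C(p,a_p,t)$ for some $t$ and so $(B_p,+)$ is cyclic; the Sylow $2$-subgroup has order $2$ and is trivially cyclic. Since $(B,+)$ is the internal direct product of its Sylow subgroups of coprime orders, $(B,+)$ is cyclic, i.e.\ $B$ is a cyclic left brace. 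By \cref{brdihedral}, such a $B$ is then unique up to isomorphism: it is the semidirect product of the trivial braces of sizes $2$ and $n$. Identifying $(B,+)$ with $\mathbb{Z}/2\times\mathbb{Z}/n$, the $\lambda$-action is $\lambda_{(i,j)}(i',j')=(i',(-1)^i j')$, and a short verification shows that $\Aut(B,+,\circ)$ consists exactly of the maps $(i,j)\mapsto(i,uj)$ with $u\in(\mathbb{Z}/n)^*$.

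It remains to show that $B$ carries exactly one uniconnected solution up to isomorphism. By \cref{remark11}, such a solution is constructed from a single element $a_1$ generating a transitive cycle base, with $K$ trivial; by \cite[Lemma $3.2$]{castelli2021classification} these $a_1$ are precisely the additive generators of $B$, i.e.\ the elements $(1,j_1)$ with $j_1\in(\mathbb{Z}/n)^*$, and such elements exist (giving existence). By \cref{bachi}, with trivial $K_1,K_2$, the solutions attached to $a_1=(1,j_1)$ and $a_2=(1,j_2)$ are isomorphic if and only if $\psi(a_1)=\lambda_z(a_2)$ for some $z\in B$ and $\psi\in\Aut(B,+,\circ)$; taking $\psi(i,j)=(i,j_2 j_1^{-1}j)$ and $z=0$ yields $\psi(a_1)=(1,j_2)=a_2=\lambda_0(a_2)$. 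Hence all admissible choices of $a_1$ produce a single isomorphism class, completing both existence and uniqueness.

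I expect the cyclicity argument in the second paragraph to be the main obstacle, since it is where one must combine the Sylow-subgroup-as-left-ideal structure with the classification of left braces having cyclic multiplicative group to exclude non-cyclic additive groups; after that point the brace is fully explicit and the remaining work reduces to a direct computation with its $\lambda$-action and automorphism group.
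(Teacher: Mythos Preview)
Your proof is correct and follows essentially the same route as the paper: establish uniconnectedness via \cref{uniconndihe}, identify the permutation left brace uniquely as the semidirect product of trivial cyclic braces via \cref{brdihedral}, and then use \cref{bachi} together with \cite[Lemma~3.2]{castelli2021classification} and the obvious automorphisms coming from $\Aut(B_n,+)$ to show that all admissible choices of $a_1$ give isomorphic solutions.

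The one place where you do more than the paper is the cyclicity step. The paper's proof simply invokes \cref{brdihedral} to conclude uniqueness of the brace, but \cref{brdihedral} is stated only for \emph{cyclic} left braces; the paper is tacitly relying on \cite[Corollary~2]{rump2019classification} (as in the proof of \cref{solZ}) to know that any left brace with dihedral multiplicative group of order $2n$, $n$ odd, has cyclic additive group. You instead supply a self-contained argument via the Sylow left ideals $B_p$ and the classification of braces with cyclic multiplicative $p$-group. Both arguments are short, but yours makes the proof independent of the external citation at this point; the paper's version is terser but leans on a result already used a few lines earlier.
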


\begin{proof}
By \cref{cosmod}, every solution with dihedral permutation group can be constructed starting from a left brace with dihedral multiplicative group. 
By \cref{brdihedral}, there exist a unique left brace $B$ having order $2n$ and with dihedral multiplicative group and it is the semidirect product of the trivial cyclic left brace of size $2$, which we call $B_2$, and the trivial cyclic left brace of size $n$, which we call $B_n$.
 Therefore, it is sufficient to show that $B=B_n\rtimes B_2$ provide a unique indecomposable solution. At first, by \cref{uniconndihe} it follows that $B$ only provides uniconnected solutions. Therefore, if $(X_1,r_1)$ and $(X_2,r_2)$ are indecomposable solutions constructed starting, respectively, by means of $a_1=(a_n,a_2)$ and $a_1'=(a_n',a_2')$ (as in by \cref{cosmod}), by \cite[Lemma 3.2]{castelli2021classification} we have that necessarily $a_2=a_2'$. Moreover, since again by \cite[Lemma 3.2]{castelli2021classification} $a_n$ and $a_n'$ additively generate $B_n$, we have that there exist $\mu\in Aut(B_n,+)$ such that $\mu(a_n)=a_n'$. Therefore, since $Aut(B_n,+)$ canonically embeds into $Aut(B,+,\circ)$, by \cref{bachi} the thesis follows.
\end{proof}

\noindent As a consequence of the previous theorem, we have the following result, which is a limitation on the possible orders of the set $X$.

\begin{prop}
    Let $(X,r)$ be an indecomposable involutive solution with dihedral permutation group. Then, the size of $X$ is an even number.
\end{prop}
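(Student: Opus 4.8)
The plan is to combine the classification result just established in \cref{soldihe3.3} with the structural dichotomy recorded in \cref{soldihe}. By \cref{soldihe3.3}, an indecomposable solution $(X,r)$ whose permutation group is dihedral of order $2n$ is \emph{uniconnected}, meaning $\mathcal{G}(X,r)$ acts regularly on $X$. Regularity immediately forces $|X|=|\mathcal{G}(X,r)|=2n$, which is manifestly even. So the heart of the matter is not a delicate computation but rather the observation that uniconnection pins down the cardinality exactly.

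First I would dispose of the case of \emph{odd} $n$. Here \cref{soldihe3.3} applies directly: the (essentially unique) indecomposable solution is uniconnected, hence $|X|=2n$, an even number. No further work is needed in this case.

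The remaining case is \emph{even} $n$, which \cref{soldihe3.3} does not cover. For this I would invoke \cref{soldihe}: since every dihedral group is in particular a generalized dihedral group (it is $A\rtimes\mathbb{Z}/2\mathbb{Z}$ with $A$ cyclic acting by inversion), \cref{soldihe} tells us that the only possibilities for $|X|$ are $|X|=2n$ or $|X|=n$. If $|X|=2n$ we are done, since $2n$ is even. If $|X|=n$, then, $n$ being even by assumption, $|X|$ is again even. Either way $|X|$ is even, completing the proof.

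The step I expect to require the most care is making sure the two cited results together cover \emph{all} admissible $n$ without a gap. For odd $n$ the uniconnection conclusion of \cref{soldihe3.3} does the entire job, while for even $n$ one must lean on \cref{soldihe} and then separately check that both of its two permitted values of $|X|$ happen to be even. The only genuinely new input is the elementary remark that when $n$ is even, $n$ itself is already even, so the branch $|X|=n$ causes no difficulty; the whole argument is thus a short case analysis rather than a substantive new computation.
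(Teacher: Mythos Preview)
Your argument is correct and rests on exactly the same two ingredients as the paper's proof, namely \cref{soldihe3.3} (uniconnectedness when the dihedral group has order $2n$ with $n$ odd) and \cref{soldihe} (the size dichotomy $|X|\in\{n,2n\}$). The only difference is organisational: the paper proceeds by contradiction --- it sets $n:=|X|$, assumes $n$ is odd, uses \cref{soldihe} to force $|\mathcal{G}(X,r)|=2n$, and then invokes \cref{soldihe3.3} to conclude $|X|=2n\neq n$ --- whereas you split directly on the parity of $n$ where $2n=|\mathcal{G}(X,r)|$. Both routes are equally short; the contradiction version is marginally slicker since it avoids handling the even-$n$ branch explicitly.
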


\begin{proof}
Let $n$ be the size of $X$ and suppose that $n$ is odd. By \cref{soldihe}, $\mathcal{G}(X,r)$ must have size $2n$ and, by the previous theorem, this implies that $X$ has size $2n$, a contradiction.
\end{proof}

\noindent Before giving the second main result of this section, we have to recall some basic facts on left braces.

\begin{rem}\label{basiccyclic}
    By results obtained in \cite[Sections 3-4]{rump2019classification}, we have that for every group $G$ having square-free order, there exist a unique left brace $B$ with multiplicative group isomorphic to $G$. Moreover, every cyclic left braces having square-free order is a semidirect product of two trivial cyclic left braces with  coprime size. Therefore, if $B$ is such a left brace, we obtain that there exist two trivial left braces $B_1$ and $B_2$ such that $B$, as left brace, is isomorphic to the direct product $(B_1\rtimes B_2)\times Z(B,\circ)$, where $ Z(B,\circ)$ is the trivial left brace on the center of $(B,\circ)$. These facts imply the following result.
\end{rem}





 \begin{cor}\label{unisqfree}
     Every indecomposable solution having a permutation group of square-free size is uniconnected.
 \end{cor}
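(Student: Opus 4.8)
The plan is to reduce the statement to \cref{solcyc} by recognizing that square-free order forces the permutation left brace to be cyclic. Let $(X,r)$ be an indecomposable solution and write $B:=\mathcal{G}(X,r)$ for its permutation left brace, so that the multiplicative group $(B,\circ)=\mathcal{G}(X,r)$ has square-free order. Since the additive and multiplicative groups of a left brace share the same underlying set, the order $|B|$ is square-free, and hence so is the order of the abelian group $(B,+)$.

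The key observation is then purely group-theoretic: an abelian group whose order is square-free is cyclic. Indeed, writing $|B|=p_1\cdots p_k$ for distinct primes $p_1,\dots,p_k$, the primary decomposition of $(B,+)$ has each Sylow subgroup of prime order, so $(B,+)\cong \mathbb{Z}/p_1\mathbb{Z}\times\cdots\times\mathbb{Z}/p_k\mathbb{Z}\cong \mathbb{Z}/|B|\mathbb{Z}$ by the Chinese Remainder Theorem. Thus $(B,+)$ is cyclic, which is precisely the statement that $B$ is a cyclic left brace. Equivalently, this is already recorded in \cref{basiccyclic}, where left braces of square-free order are presented as cyclic ones.

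Having identified $B$ as a finite cyclic left brace, I would simply invoke \cref{solcyc}: every indecomposable solution whose permutation left brace is a finite cyclic left brace is uniconnected. Since $(X,r)$ is by hypothesis indecomposable with permutation left brace $B$, it follows at once that $(X,r)$ is uniconnected.

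I do not expect a genuine obstacle here: the entire content lies in the elementary remark that abelian groups of square-free order are cyclic, after which everything reduces to the already-established \cref{solcyc} (whose proof in turn handles, via \cref{cosmod}, the analysis of the stabilizer $St(a_1)$ and the triviality of the core-free subgroup $K$). The only points worth verifying are that the hypotheses of \cref{solcyc} are literally met, namely that $B$ is finite (clear, since $|B|$ is a finite square-free number) and that the solution is indecomposable with $B$ as permutation left brace (our standing assumption); no further work on stabilizers or core-free subgroups is required.
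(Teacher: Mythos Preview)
Your proposal is correct and follows the same route as the paper: recognise that the permutation left brace $B$ is cyclic (since its additive group is abelian of square-free order) and then invoke \cref{solcyc}. The paper phrases the first step by pointing to \cref{basiccyclic} (and the citation to \cite{rump2019classification}) rather than the elementary observation you spell out, and it also mentions \cref{cosmod}, but that is already absorbed into \cref{solcyc}; there is no substantive difference.
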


 \begin{proof}
 The thesis follows by the previous remark, toegether with  \cref{cosmod} and \cref{solcyc}.
 \end{proof}

 \noindent Now, we are able to classify all the indecomposable solutions whose permutation group has  square-free size.

 \begin{theor}
Let $G$ be a finite group having square-free size and let $B$ be the unique left brace having $G$ as multiplicative group. If $B_1, B_2$ and $Z(B,\circ)$ are the left braces as in \cref{basiccyclic}, the elements of $B_2$ determine the isomorphism classes of the indecomposable solutions having $G$ as permutation group.  
 \end{theor}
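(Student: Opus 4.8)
The plan is to feed the structural description of $B$ from \cref{basiccyclic} into the classification machinery of \cref{cosmod} and \cref{bachi}. Since $G$ has square-free order, \cref{unisqfree} tells us that every indecomposable solution with permutation group $G$ is uniconnected, so by \cref{remark11} each such solution arises from $B$ via \cref{cosmod} with \emph{trivial} stabilizer $K$. Such a solution is thus encoded by a single additive generator $a_1$ of the (cyclic) left brace $B$: the $\lambda$-orbit of $a_1$ is $\lambda$-invariant and, containing the additive generator $a_1$, it generates $(B,+)$, so it is the required transitive cycle base; the fact that the chosen element must additively generate $B$ is \cite[Lemma 3.2]{castelli2021classification}. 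Writing $B\cong (B_1\rtimes B_2)\times Z(B,\circ)$ as in \cref{basiccyclic}, I decompose $a_1=(u,v,w)$ with $u,v,w$ additive generators of $B_1,B_2,Z(B,\circ)$ respectively. By \cref{bachi} with $K=\{\id\}$, two data $a_1,a_1'$ give isomorphic solutions exactly when there are $z\in B$ and $\psi\in\Aut(B,+,\circ)$ with $\psi(a_1)=\lambda_z(a_1')$, so everything reduces to understanding the joint action of the $\lambda$-maps and of $\Aut(B,+,\circ)$ on additive generators.

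The two computations I would carry out explicitly are the following. First, the $\lambda$-action: since $B_2$ and $Z(B,\circ)$ are trivial braces and, in the semidirect factor $B_1\rtimes B_2$, one computes $\lambda_{(s,t)}(p,q)=(\beta_t(p),q)$ for the structural action $\beta\colon B_2\to\Aut(B_1,+)$, it follows that $\lambda_z(u,v,w)=(\beta_t(u),v,w)$, where $t$ is the $B_2$-component of $z$; thus the $\lambda$-maps move only the $B_1$-coordinate and fix the $B_2$- and $Z(B,\circ)$-coordinates. Second, the automorphism group: because the three factors have pairwise coprime (square-free) orders they are characteristic Hall subgroups of $(B,+)$, so every $\psi\in\Aut(B,+,\circ)$ splits as $\psi=(\psi_1,\psi_2,\psi_z)$. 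Imposing compatibility with $\circ$ yields $\psi_1\circ\beta_{a}=\beta_{\psi_2(a)}\circ\psi_1$ for all $a\in B_2$; as $\Aut(B_1,+)$ is abelian this gives $\beta_a=\beta_{\psi_2(a)}$, and since $Z(B,\circ)$ is by construction the \emph{entire} centre of $(B,\circ)$ the action $\beta$ is faithful, whence $\psi_2=\id_{B_2}$. Meanwhile $\psi_1$ and $\psi_z$ may be taken to be arbitrary elements of $\Aut(B_1,+)$ and $\Aut(Z(B,\circ),+)$, which act transitively on the generators of $B_1$ and of $Z(B,\circ)$.

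Combining the two computations finishes the argument. By \cref{bachi} we compare $\psi(a_1)$ with $\lambda_z(a_1')$: the $B_2$-coordinate of $\psi(a_1)$ equals $v$ (as $\psi_2=\id_{B_2}$), while that of $\lambda_z(a_1')$ equals $v'$ (as the $\lambda$-maps fix this coordinate), so the equality $\psi(a_1)=\lambda_z(a_1')$ forces $v=v'$. Conversely, when $v=v'$ the remaining $B_1$- and $Z(B,\circ)$-coordinates can always be matched using the transitivity of $\Aut(B_1,+)$ and $\Aut(Z(B,\circ),+)$ on generators (taking $z$ with trivial $B_2$-component). Hence $a_1$ and $a_1'$ give isomorphic solutions if and only if $v=v'$, so the $B_2$-coordinate is a complete and free invariant: we obtain a bijection between the isomorphism classes of indecomposable solutions with permutation group $G$ and the additive generators of $B_2$. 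As a sanity check, the dihedral case $|B_2|=2$ returns the single solution of \cref{soldihe3.3}.

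I expect the main obstacle to be the determination of $\Aut(B,+,\circ)$, and specifically the identity $\psi_2=\id_{B_2}$: this is precisely where the hypothesis that $Z(B,\circ)$ splits off the whole centre is used, since it is equivalent to the faithfulness of $\beta$, and faithfulness is exactly what rigidifies the $B_2$-coordinate. The remaining steps — the explicit form of the $\lambda$-action on the semidirect product and the transitivity of the additive automorphism groups on generators of the cyclic factors — are routine.
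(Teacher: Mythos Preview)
Your argument is correct and proceeds along the same lines as the paper: reduce to the uniconnected case via \cref{unisqfree}, then apply \cref{cosmod} and \cref{bachi}. The paper outsources the remaining computation to \cite[Corollary~6.2]{castelli2021classification}, whereas you carry out explicitly the determination of the $\lambda$-action and of $\Aut(B,+,\circ)$ on the decomposition $(B_1\rtimes B_2)\times Z(B,\circ)$; this is precisely the content of that external result specialised to the square-free setting.
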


 
 \begin{proof}
By \cref{unisqfree}, every indecomposable involutive solution with permutation group isomorphic to $G$ is uniconnected. By means of \cref{cosmod}, such a solutions having $G$ as permutation group can be constructed by $B$. Therefore, the statement can be showed as \cite[Corollary $6.2$]{castelli2021classification} using the left brace $B$.
 \end{proof}

 \noindent Therefore, to construct easily all the indecomposable solution whose permutation group $G$ has square-free size, it is sufficient to consider the left brace $B$ having $G$ as permutation group, a generator $a$ of the additive group $(B,+)$ and then we can use the formula given in \cref{cosmod} taking $K$ as the trivial permutation group. By the previous theorem, we can distinguish the isomorphism classes.

\section{Indecomposable solutions with minimal non-cyclic permutation group}

In this section, we study indecomposable involutive solutions with minimal non-cyclic group. In particular, we prove that these solutions always are uniconnected and we classify them.

\medskip

\noindent First of all, recall that a finite group $G$ is said to be a \emph{minimal non-cyclic group} if it is not a cyclic group and every proper subgroup is cyclic. These groups are completely classified by Miller and Moreno.

\begin{lemma}[See \cite{miller1903non}]\label{gruc}
A finite group $G$ is a minimal non-cyclic group if and only if G is one of the following groups:
\begin{itemize}
    \item[a)] $(\mathbb{Z}/p\mathbb{Z}\times \mathbb{Z}/p\mathbb{Z},+)$, where $p$ is a prime number;
    \item[b)] the quaternion group $Q_8$;
    \item[c)] $<a,b\; |\; a^{p}=b^{q^n}=1,b^{-1}ab=a^r>$, where $p,q$ are distinct prime numbers, $r\neq 1$, $r\equiv_{q} 1$ and $r^q\equiv_p 1$.
\end{itemize}
\end{lemma}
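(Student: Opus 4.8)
The plan is to establish both implications, disposing of the elementary direction first. For the ``if'' direction I would simply verify that each of the three groups is non-cyclic while all its proper subgroups are cyclic. In case (a) every proper subgroup has order $1$ or $p$ and is thus cyclic; in case (b) the proper subgroups of $Q_8$ are its centre and the three cyclic subgroups $\langle i\rangle,\langle j\rangle,\langle k\rangle$ of order $4$; and in case (c) it is enough to enumerate the maximal subgroups, namely the Sylow $q$-subgroups (each $\cong\mathbb{Z}/q^n\mathbb{Z}$) and the unique subgroup $\langle a,b^q\rangle$ of index $q$, which is abelian exactly because $r^q\equiv_p 1$ and hence cyclic of order $pq^{n-1}$. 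Since any proper subgroup is contained in a maximal one and subgroups of cyclic groups are cyclic, the verification is complete.

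For the converse, let $G$ be a finite minimal non-cyclic group; every proper subgroup is then cyclic, hence nilpotent, and I would branch on whether $G$ itself is nilpotent. If it is, then $G$ is the direct product of its Sylow subgroups, and if two distinct primes divided $|G|$ each Sylow subgroup would be a proper, hence cyclic, subgroup, forcing $G$ to be a direct product of cyclic groups of coprime order, i.e.\ cyclic, a contradiction. Thus $G$ is a $p$-group. Here I would first show that $G$ is $2$-generated: since $\Phi(G)$ lies in every maximal subgroup, a cyclic maximal subgroup $M$ has cyclic quotient $M/\Phi(G)$, so if $G/\Phi(G)\cong(\mathbb{Z}/p\mathbb{Z})^d$ had $d\geq 3$ some maximal subgroup would be non-cyclic; hence $d=2$. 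I would then appeal to the classification of $p$-groups with a cyclic maximal subgroup and run through the short list (cyclic, $\mathbb{Z}/p^m\mathbb{Z}\times\mathbb{Z}/p\mathbb{Z}$, dihedral, semidihedral, modular, generalised quaternion): requiring that \emph{every} maximal subgroup be cyclic eliminates all families except $\mathbb{Z}/p\mathbb{Z}\times\mathbb{Z}/p\mathbb{Z}$ and $Q_8$, giving cases (a) and (b).

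If $G$ is not nilpotent, then $G$ is a minimal non-nilpotent group, and I would quote the classical Schmidt structure theorem: $|G|=p^aq^b$, the Sylow $p$-subgroup $P$ is normal, the Sylow $q$-subgroup $Q$ is cyclic, and $G=P\rtimes Q$. Cyclicity of proper subgroups makes $P=\langle a\rangle$ cyclic, and the decisive step is to prove $a=1$. Assuming $a\geq 2$, the characteristic subgroup $\langle a^p\rangle$ is normal in $G$, so $\langle a^p\rangle\rtimes Q$ is a proper, hence cyclic, subgroup; being abelian it forces $Q$ to centralise $\langle a^p\rangle$, so the conjugation exponent $s$ of $Q$ on $P$ satisfies $s\equiv 1\pmod{p^{a-1}}$. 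But such residues form a subgroup of $(\mathbb{Z}/p^a\mathbb{Z})^{\ast}$ of order $p$, which cannot contain the non-trivial $q$-power-order element induced by $Q$ when $p\neq q$, a contradiction. Hence $|P|=p$, $G\cong\mathbb{Z}/p\mathbb{Z}\rtimes\mathbb{Z}/q^n\mathbb{Z}$, and writing $b^{-1}ab=a^r$ the non-triviality of the action gives $r\neq 1$ while the same ``proper-is-cyclic'' argument applied to $\langle a,b^q\rangle$ yields $r^q\equiv_p 1$; together with the choice of an integer representative for this exponent, this is precisely case (c).

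The hardest part will be the non-nilpotent case: one must both justify the Schmidt-group shape (normal Sylow $p$-subgroup with cyclic complement) and then extract the numerical constraints on $r$, the genuinely delicate point being the exclusion of $a\geq 2$ through the structure of $\Aut(\mathbb{Z}/p^a\mathbb{Z})$ and the coprimality $p\neq q$. The nilpotent case is lighter but still leans on the (citable) classification of $p$-groups possessing a cyclic maximal subgroup, where the subtlety is to discard the dihedral, semidihedral, modular and larger generalised-quaternion groups while retaining exactly $Q_8$.
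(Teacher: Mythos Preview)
Your argument is correct. The paper does not actually prove this lemma: it is stated with the attribution ``See \cite{miller1903non}'' and no proof is given, the authors simply quoting the classical Miller--Moreno classification as a black box. Your proposal therefore goes considerably further than the paper, supplying a self-contained proof where the paper offers only a citation.

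A couple of minor remarks on presentation. In the ``if'' direction you might state explicitly that each of the three groups is non-cyclic (trivial, but part of the definition); for case~(c) this follows because $r\not\equiv 1\pmod p$ makes the group non-abelian. In the non-nilpotent case your exclusion of $a\ge 2$ is clean: the key point, which you identify correctly, is that the automorphism of $\langle a\rangle$ induced by $b$ lands in the order-$p$ subgroup $\{1+kp^{a-1}\}\le(\mathbb{Z}/p^a\mathbb{Z})^{\ast}$, yet must have order dividing $q^n$ and be non-trivial (else $G$ would be nilpotent), contradicting $p\ne q$. Finally, the condition ``$r\equiv_q 1$'' in the paper's statement appears to be either a normalisation of the representative $r$ or a misprint for $r\not\equiv_p 1$; in any event your derived conditions $r\not\equiv 1\pmod p$ and $r^q\equiv 1\pmod p$ are exactly what characterise the groups in~(c), so nothing is missing from your argument.
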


\begin{conv}
    From now on, the groups of the previous lemma will be called of \emph{type} a), b) and c).
\end{conv} 

\begin{theor}\label{uniconnmin}
Let $(X,r)$ be an indecomposable involutive solution such that $\mathcal{G}(X,r)$ is a minimal non-cyclic group. Then, $(X,r)$ is uniconnected.
\end{theor}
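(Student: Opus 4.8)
The plan is to split the argument according to the three families of minimal non-cyclic groups in \cref{gruc} and to dispose of the abelian and quaternionic cases by results already at hand, isolating the metacyclic family as the only substantial case. If $\mathcal{G}(X,r)$ is of type a), i.e.\ $\mathbb{Z}/p\mathbb{Z}\times\mathbb{Z}/p\mathbb{Z}$, then the permutation group is abelian, hence a Dedekind group, and uniconnectedness is immediate from \cref{solded}. If it is of type b), i.e.\ the quaternion group $Q_8$, then it is a generalized quaternion $2$-group and \cref{solqua} applies directly. Thus the entire content of the theorem lies in type c), where $\mathcal{G}(X,r)\cong G=\langle a,b\mid a^{p}=b^{q^{n}}=1,\ b^{-1}ab=a^{r}\rangle$, and my goal becomes showing that the core-free subgroup $K$ furnished by \cref{cosmod} must be trivial, which by \cref{remark11} is equivalent to uniconnectedness.

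For type c) I would first extract the relevant group theory. Since $b^{-1}ab=a^{r}\in\langle a\rangle$, the subgroup $\langle a\rangle$ is normal, and being the unique Sylow $p$-subgroup it is the unique subgroup of order $p$; hence any subgroup whose order is divisible by $p$ contains the normal subgroup $\langle a\rangle$ and cannot be core-free. So a core-free subgroup $K$ necessarily has $q$-power order, lies in some Sylow $q$-subgroup, and, as Sylow $q$-subgroups are conjugate to $\langle b\rangle$ and core-freeness is a conjugacy invariant, it suffices to decide which of the cyclic subgroups $\langle b^{q^{i}}\rangle$ of $\langle b\rangle$ are core-free. The key computation here is the center: the hypotheses $r^{q}\equiv_{p}1$ and $r\neq 1$ force $r$ to have order exactly $q$ modulo $p$, so $b^{q}$ centralizes $a$ and one obtains $Z(G)=\langle b^{q}\rangle$, of order $q^{n-1}$.

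The argument then bifurcates on $n$. When $n\geq 2$ the center $\langle b^{q}\rangle$ is non-trivial: each $\langle b^{q^{i}}\rangle$ with $i\geq 1$ is central, hence normal, so it is core-free only if trivial; and $\langle b\rangle$ itself has core equal to its maximal subgroup $\langle b^{q}\rangle\neq\{\id\}$, so it is not core-free either. Consequently $G$ possesses no non-trivial core-free subgroup at all, and \cref{cosmod} together with \cref{remark11} forces $K=\{\id\}$, giving uniconnectedness. When $n=1$ the group has order $pq$ and $\langle b\rangle$ \emph{is} genuinely core-free as an abstract group, so this purely group-theoretic route breaks down; instead I would observe that $G$ is then a Z-group whose Sylow subgroups have prime order, in particular no Sylow $2$-subgroup has order $4$, and invoke \cref{solZ}.

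The main obstacle is precisely this bifurcation at $n$. For $n\geq 2$ the statement is elementary group theory once the center is identified, but for $n=1$ the group does admit core-free subgroups of order $q$, so the group-theoretic argument cannot exclude a non-regular action and one genuinely needs the left-brace input packaged in \cref{solZ} (that the associated permutation brace is cyclic) to rule these out. Accordingly, the two delicate points I expect to verify carefully are the identification $Z(G)=\langle b^{q}\rangle$ and the computation that the core of $\langle b\rangle$ equals $\langle b^{q}\rangle$ when $n\geq 2$.
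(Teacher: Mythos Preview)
Your argument is correct, and for type~c) with $n\geq 2$ it is both more elementary and more uniform than the paper's. The paper handles type~c) by invoking \cref{solZ} whenever $(q,n)\neq(2,2)$ (relying on Rump's result that such Z-groups carry only cyclic brace structures) and then treats the exceptional case $(q,n)=(2,2)$ by a separate brace-theoretic computation: it identifies the permutation brace as a semidirect product $A_p\rtimes A_2$, observes that the image of $\lambda$ is abelian so that the core-free subgroup $H$ fixes the whole transitive cycle base, concludes $H\subseteq\mathrm{Soc}(\mathcal{G}(X,r))$, and finally argues that $H$ is characteristic in the cyclic socle, hence normal, hence trivial. Your route avoids this case distinction entirely: for every $n\geq 2$ the center $\langle b^{q}\rangle$ is non-trivial and sits inside every Sylow $q$-subgroup, so no non-trivial $q$-subgroup is core-free, and the $(2,2)$ case requires no special attention. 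The trade-off is that your bifurcation leaves the $n=1$ case (groups of order $pq$) as the only one needing brace input via \cref{solZ}, whereas the paper pushes \cref{solZ} as far as it goes and reserves the ad hoc argument for the single size $4p$. For types~a) and~b) the two proofs are essentially identical (you cite \cref{solqua} for $Q_8$ while the paper cites \cref{solded}; either works since $Q_8$ is both Dedekind and generalized quaternion).
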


\begin{proof}
If $\mathcal{G}(X,r)$ is isomorphic to the group a) of \cref{gruc}, then it is abelian and hence $(X,r)$ clearly is uniconnected. If $\mathcal{G}(X,r)$ is isomorphic to the group b) of \cref{gruc}, then $(X,r)$ is uniconnected by \cref{solded}. Finally, suppose that $\mathcal{G}(X,r)$ is isomorphic to the group c) of \cref{gruc}. If $(q,n)\neq (2,2)$, then $(X,r)$ is uniconnected by \cref{solZ}, hence the unique case to consider is when $(q,n)$ is equal to $(2,2)$. In this case, we have that $\mathcal{G}(X,r)$, as a group, is isomorphic to $A_p\rtimes A_2$, where $A_p$ and $A_2$ are the left ideals of $\mathcal{G}(X,r)$ (regarded as permutation left brace) given by the Sylow subgroups of $(\mathcal{G}(X,r),+) $. Moreover, since $A_p$ has a trivial left brace structure, $\mathcal{G}(X,r) $, as left brace, is just the semidirect product of the left braces $A_p$ and $A_2$, where $(A_2,\circ)$ acts by inversion on $A_p$. By \cref{cosmod}, there exist an element $(a_2,b_2)\in A_p\rtimes A_2$ and a core-free subgroup $H$ contained in $Stab(a_2,b_2) $ such that $X$ can be identified with the left cosets $\mathcal{G}(X,r)/H$ and the $\lambda$-orbit of $(a_2,b_2) $ additively generates the whole permutation left brace $\mathcal{G}(X,r)$. Moreover, since the image of $\mathcal{G}(X,r)$ by $\lambda$ is isomorphic to a quotient of $A_2$, and hence is abelian, we obtain that $H$ stabilizes the whole $\lambda$-orbit of $(a_2,b_2)$, and this implies that $H\subseteq Soc(\mathcal{G}(X,r))$. By a standard calculation, we obtain that the socle of $\mathcal{G}(X,r)$ is an abelian subgroup with $|Soc(\mathcal{G}(X,r))|\in \{p,2p \} $, therefore it follows that $H$ is a characteristic subgroup of $Soc(\mathcal{G}(X,r)) $, hence it is normal in $\mathcal{G}(X,r) $. Since $H$ is core-free, necessarily $|H|=1$ and hence $|X|=|\mathcal{G}(X,r)|$.
\end{proof}

\noindent At first, we consider indecomposable involutive solutions with permutation group of type a) and b).

\begin{prop}
There is a unique indecomposable involutive solution $(X,r)$ with minimal non-cyclic permutation group $\mathcal{G}(X,r)$ of type a) (resp. b)).
\end{prop}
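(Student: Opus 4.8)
The plan is to reduce everything to the construction/isomorphism machinery by using the uniconnectedness just established. By \cref{uniconnmin} every solution in the statement is uniconnected, so by \cref{remark11} it is determined by a left brace $B$ whose multiplicative group is the prescribed one, a single $\lambda$-orbit $Y$ generating $(B,+)$ (a transitive cycle base), and a point $a_1\in Y$, the core-free subgroup being forced to be trivial; by \cref{bachi} two such data give isomorphic solutions exactly when they differ by an element of $\Aut(B,+,\circ)$ followed by a $\lambda$-translation. Hence it suffices to prove that (i) there is a unique brace $B$ with multiplicative group of type a) (resp. b)) possessing a transitive cycle base, and (ii) the generating $\lambda$-orbits of that $B$ form a single $\Aut(B,+,\circ)$-orbit.

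For type a) I would first locate $B$. The trivial brace on $(\mathbb{Z}/p\mathbb{Z})^2$ has $\lambda\equiv\id$, so each of its $\lambda$-orbits is a singleton and none generates the additive group; thus $B$ is nontrivial, and examining $Soc(B)=\ker\lambda$ together with the two possible additive groups leaves a single candidate, namely the unique nontrivial brace with additive group $(\mathbb{Z}/p\mathbb{Z})^2$ for odd $p$, and $C(2,2,1)$ for $p=2$. A direct computation then shows that the generating $\lambda$-orbits are exactly the $p-1$ ``lines'' with a fixed nonzero second coordinate (the single orbit $\{1,3\}$ when $p=2$), and that the additive map $(x,y)\mapsto(\nu^2 x,\nu y)$ is a brace automorphism carrying one such line onto any other. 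By \cref{bachi} this gives a single isomorphism class.

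For type b) the scheme is identical, but pinning down $B$ is the heart of the matter. Since $Q_8$ embeds into none of the automorphism groups of the abelian groups of order $8$, the socle $Soc(B)=\ker\lambda$ is nontrivial; running through the admissible normal subgroups ($|Soc(B)|\in\{2,4\}$) and the compatible additive groups produces exactly three braces with multiplicative group $Q_8$, with additive groups $(\mathbb{Z}/2\mathbb{Z})^3$, $\mathbb{Z}/4\mathbb{Z}\times\mathbb{Z}/2\mathbb{Z}$ and $\mathbb{Z}/8\mathbb{Z}$. A computation of the $\lambda$-orbits shows that for the first two none generates the additive group, so only the brace on $\mathbb{Z}/8\mathbb{Z}$, given by $\lambda_a(b)=3b$ for odd $a$ and $\lambda_a(b)=b$ for even $a$, admits a transitive cycle base; its generating orbits are $\{1,3\}$ and $\{5,7\}$, and multiplication by $5$ is a brace automorphism interchanging them, so \cref{bachi} yields uniqueness once more.

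I expect the main obstacle to be exactly this enumeration in type b). The subtle point is that, $\Aut(\mathbb{Z}/8\mathbb{Z})$ being non-cyclic, a cyclic additive group can support the non-abelian multiplicative group $Q_8$; the brace on $\mathbb{Z}/8\mathbb{Z}$ is easy to overlook, yet it is the only one of the three whose $\lambda$-action has an orbit large enough to generate $(B,+)$, and hence the only source of a $Q_8$-solution. Keeping simultaneous control of the additive group, the socle and the non-elementary $\lambda$-action is what makes this case genuinely computational, in contrast with the transparent orbit bookkeeping of the abelian type a).
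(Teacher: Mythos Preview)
Your plan is sound and, where the computations are carried through, correct; but it is a genuinely different route from the paper's. The paper disposes of type a) by citing \cite[Theorem~21]{capiru2020} (which already contains the uniqueness for indecomposable solutions with permutation group $(\mathbb{Z}/p\mathbb{Z})^2$), and of type b) by invoking \cref{uniconnmin} to force $|X|=8$ and then inspecting the database of size-$8$ solutions. Your argument instead works entirely inside brace theory: locate the unique brace with the prescribed multiplicative group admitting a transitive cycle base, and then show that $\Aut(B,+,\circ)$ acts transitively on the generating $\lambda$-orbits so that \cref{bachi} applies. This buys a self-contained and structurally informative proof, at the cost of the brace enumeration.

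For type a) your details are accurate: for odd $p$ the unique nontrivial brace on $(\mathbb{Z}/p\mathbb{Z})^2$ is the only candidate, its generating orbits are the affine lines $\{(c,b):c\in\mathbb{F}_p\}$ with $b\neq 0$, and $(x,y)\mapsto(\nu^2 x,\nu y)$ is indeed a brace automorphism permuting them transitively; for $p=2$ the only relevant brace is $C(2,2,1)$ with its single generating orbit $\{1,3\}$. For type b) your identification of the cyclic brace on $\mathbb{Z}/8\mathbb{Z}$ with $\lambda_a=\times 3$ for odd $a$ is correct, its two generating orbits $\{1,3\}$ and $\{5,7\}$ are exchanged by $\times 5\in\Aut(B,+,\circ)$, so uniqueness follows once that brace is isolated. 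The one point that is asserted rather than argued is the claim that there are \emph{exactly three} braces with multiplicative group $Q_8$ and that the two non-cyclic ones have no generating $\lambda$-orbit: this is the substantive computation you flag yourself, and it genuinely requires running through the possible $\lambda$-actions for each additive type (noting, for instance, that $Q_8$ does not embed in $GL_3(\mathbb{F}_2)\cong PSL_2(7)$, so $Soc(B)$ is forced to be nontrivial in the elementary abelian case). If you intend a fully written proof rather than a sketch, that enumeration must be made explicit; otherwise your approach is correct and more transparent than the paper's appeal to an external classification plus a computer check.
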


\begin{proof}
If $\mathcal{G}(X,r)$ is of type a) the thesis follows by \cite[Theorem 21]{capiru2020}. Now, by \cref{uniconnmin} $(X,r)$ is uniconnected, hence if $\mathcal{G}(X,r)$ is of type b) the thesis follows by an inspection of the solutions having size $8$ \cite{Ve15pack}.
\end{proof}

\noindent Therefore, by the previous proposition, we have to focus on indecomposable involutive solutions with permutation group of type c).\\
For our purpose, we have to determine left braces with multiplicative group isomorphic to groups of type c). If $B$ is such a left brace and $A_p,A_q$ are the left ideals given by the Sylow subgroups of $(B,+)$, we have that $(B,\circ)$ is the groups semidirect product $(A_p,\circ)\rtimes (A_q,\circ)$. Moreover, since $A_p$ has a trivial left brace structure, it follows that $B$, as left brace, is just the left braces semidirect product $(A_p,+,\circ)\rtimes (A_q,+,\circ)$. Hence, to find the left braces with multiplicative group isomorphic to groups of type c), we have to find the possible semidirect products of left braces having multiplicative group isomorphic to  $\mathbb{Z}/p\mathbb{Z}$ and $\mathbb{Z}/q^n\mathbb{Z}$.

\begin{prop}\label{minbrace1}
There exist a unique left brace $B$ such that $B$ is the semidirect product of the trivial left braces on $\mathbb{Z}/p\mathbb{Z}$ and $\mathbb{Z}/q^n\mathbb{Z}$ and $\mathbb{Z}/p\mathbb{Z}\rtimes \mathbb{Z}/q^n\mathbb{Z}$ is isomorphic to the group c). 
\end{prop}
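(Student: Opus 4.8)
The plan is to reduce the problem to the data of the single homomorphism defining the underlying semidirect product, and then to show that all admissible choices of that homomorphism produce isomorphic left braces. Write $A_p$ (resp. $A_q$) for the trivial left brace on $\mathbb{Z}/p\mathbb{Z}$ (resp. $\mathbb{Z}/q^n\mathbb{Z}$). Since $A_p$ is trivial, a bijection preserves $\circ$ exactly when it preserves $+$, so $\Aut(A_p,+,\circ)=\Aut(\mathbb{Z}/p\mathbb{Z},+)\cong(\mathbb{Z}/p\mathbb{Z})^{\times}$ and every additive automorphism of $A_p$ is automatically a brace automorphism. Consequently a left brace semidirect product $A_p\rtimes_\phi A_q$ is completely determined by one homomorphism $\phi\colon(A_q,\circ)=(\mathbb{Z}/q^n\mathbb{Z},+)\to(\mathbb{Z}/p\mathbb{Z})^{\times}$; its additive group is the direct product $(\mathbb{Z}/p\mathbb{Z},+)\times(\mathbb{Z}/q^n\mathbb{Z},+)$, independent of $\phi$, and its multiplicative group is the group-theoretic semidirect product $\mathbb{Z}/p\mathbb{Z}\rtimes_\phi\mathbb{Z}/q^n\mathbb{Z}$.

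First I would pin down which $\phi$ make the multiplicative group isomorphic to the group c) of \cref{gruc}. If $\phi$ has image of order $q^{j}$, a short computation of the conjugation relations shows that the centre of $\mathbb{Z}/p\mathbb{Z}\rtimes_\phi\mathbb{Z}/q^n\mathbb{Z}$ is generated by $b^{q^{j}}$ and has order $q^{\,n-j}$; since the defining relations of c) (namely $r^q\equiv_p 1$ with $r\neq 1$) force $\operatorname{ord}_p(r)=q$ and hence a centre of order $q^{\,n-1}$, the multiplicative group is isomorphic to c) precisely when the image of $\phi$ has order exactly $q$. Equivalently, $\phi$ must be a surjection onto the unique subgroup $C_q\le(\mathbb{Z}/p\mathbb{Z})^{\times}$ of order $q$, which is unique because $(\mathbb{Z}/p\mathbb{Z})^{\times}$ is cyclic. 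Such a $\phi$ exists, since the mere existence of c) forces $q\mid p-1$; choosing any surjection onto $C_q$ then yields a left brace with the required properties, which establishes existence.

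For uniqueness I would compare two admissible homomorphisms $\phi_1,\phi_2$, each a surjection onto $C_q$ with kernel the unique index-$q$ subgroup $q\mathbb{Z}/q^n\mathbb{Z}$. As $\phi_1(1)$ and $\phi_2(1)$ both generate $C_q$, there is a unit $m\in(\mathbb{Z}/q^n\mathbb{Z})^{\times}$ with $\phi_1(m)=\phi_2(1)$, and the corresponding automorphism $\beta\colon x\mapsto mx$ lies in $\Aut(\mathbb{Z}/q^n\mathbb{Z},+)=\Aut(A_q,+,\circ)$ and satisfies $\phi_1\circ\beta=\phi_2$. I then claim that $F(x,y):=(x,\beta(y))$ is an isomorphism of left braces $A_p\rtimes_{\phi_2}A_q\to A_p\rtimes_{\phi_1}A_q$: additively it is $\id\times\beta$, an automorphism of the common additive group, while multiplicatively the standard semidirect product computation, using $\phi_1(\beta(h))=\phi_2(h)$, shows that $F$ preserves $\circ$. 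Hence any two admissible left braces are isomorphic, which gives uniqueness.

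The step I expect to require the most care is the determination of the admissible $\phi$: when $q^2\mid p-1$ and $n\ge 2$ there are homomorphisms whose image has order a higher power of $q$, and these must be excluded, which is exactly what the centre computation achieves. Everything else is bookkeeping, and it stays clean precisely because the triviality of $A_p$ collapses the distinction between additive and multiplicative automorphisms of the normal factor, so that checking $F$ is simultaneously additive and multiplicative is routine.
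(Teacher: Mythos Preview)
Your proposal is correct and follows the same approach as the paper, which simply asserts that ``by a standard verification, we have that two distinct semidirect products as in the statement are isomorphic.'' You have supplied precisely that verification: pinning down the admissible homomorphisms $\phi$ via the centre computation, and then exhibiting the explicit isomorphism $F=(\id,\beta)$ induced by an automorphism of $A_q$ intertwining $\phi_1$ and $\phi_2$. Your additional care in excluding $\phi$ with image of order $q^j$ for $j\ge 2$ is not strictly needed for the uniqueness claim as stated (the hypothesis already restricts to braces whose multiplicative group is isomorphic to c)), but it does no harm and clarifies existence.
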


\begin{proof}
By a standard verification, we have that two distinct semidirect product as in the statement are isomorphic.
\end{proof}

\begin{prop}\label{minbrace2}
Let $n$ be a natural number and $p,q$ be distinct prime numbers with $q<p$ such that $(q,n)\neq (2,2)$. Moreover, let $A_p$ be the trivial left brace of size $p$, $A_q$ a non-trivial left brace with cyclic multiplicative group of size $q^n$ and $\alpha,\alpha':(A_q,\circ)\longrightarrow Aut(A_p,+)$ be a homomorphisms such that $(A_p\rtimes_{\alpha} A_q,\circ)$ and $(A_p\rtimes_{\alpha'} A_q,\circ)$ are isomorphic to the group c). Then, the left braces $A_p\rtimes_{\alpha} A_q $ and $A_p\rtimes_{\alpha'} A_q$ are isomorphic if and only if  $\alpha=\alpha'$.
\end{prop}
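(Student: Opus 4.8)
The plan is to prove the nontrivial implication (isomorphic $\Rightarrow\alpha=\alpha'$), the converse being immediate since $\alpha=\alpha'$ yields literally the same left brace. Write $B=A_p\rtimes_\alpha A_q$ and $B'=A_p\rtimes_{\alpha'}A_q$. Because $A_p$ is a trivial left brace, the multiplication of $B$ reads $(l_1,m_1)\circ(l_2,m_2)=(l_1+\alpha_{m_1}(l_2),\,m_1\circ m_2)$, with each $\alpha_m\in\operatorname{Aut}(A_p,+)=(\mathbb{Z}/p\mathbb{Z})^{*}$, and similarly for $B'$ with $\alpha'$. The additive group $(B,+)=(A_p,+)\times(A_q,+)$ is cyclic of order $pq^n$ (the same for $B'$), so $A_p$ and $A_q$ are its unique, hence characteristic, Sylow subgroups.

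First I would show that any left brace isomorphism $\phi\colon B\to B'$ splits as a product. As an automorphism of the common additive cyclic group, $\phi$ preserves the Sylow subgroups, so $\phi(A_p)=A_p$ and $\phi(A_q)=A_q$ setwise; restricting yields $\phi|_{A_p}=\mu\in(\mathbb{Z}/p\mathbb{Z})^{*}$ and $\phi|_{A_q}=\nu\in\operatorname{Aut}(A_q,+,\circ)$, where I use that the sub-left-brace structures of $A_p$ and of $A_q$ inside $B$ and $B'$ do not depend on $\alpha,\alpha'$ (they are, respectively, the trivial brace on $A_p$ and the brace $A_q$, since $\alpha_0=\operatorname{id}$ and each $\alpha_m$ fixes $0$). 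By additivity, $\phi(l,m)=(\mu(l),\nu(m))$.

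Next I would feed this into multiplicativity of $\phi$. Expanding $\phi\big((l_1,m_1)\circ(l_2,m_2)\big)=\phi(l_1,m_1)\circ\phi(l_2,m_2)$ and comparing first coordinates gives $\mu(\alpha_m(l))=\alpha'_{\nu(m)}(\mu(l))$ for all $l\in A_p$, $m\in A_q$; since $(\mathbb{Z}/p\mathbb{Z})^{*}$ is abelian, $\mu$ cancels and one obtains $\alpha_m=\alpha'_{\nu(m)}$, that is, $\alpha=\alpha'\circ\nu$. It therefore remains to prove that $\alpha'\circ\nu=\alpha'$ for every left brace automorphism $\nu$ of $A_q$.

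This is the crux, and the main obstacle. Writing $A_q=C(q,n,t)$ with $1\le t\le n-1$ (the only shape having cyclic multiplicative group of prime-power order $q^n$; the excluded case $(q,n)=(2,2)$ is precisely the one admitting no nontrivial such $A_q$), I would first determine $\operatorname{Aut}(A_q,+,\circ)$: an additive automorphism $a\mapsto ca$ is multiplicative exactly when $cq^tab\equiv c^2q^tab\pmod{q^n}$, i.e. when $c\equiv 1\pmod{q^{\,n-t}}$, and in particular $c\equiv 1\pmod q$. On the other hand, since $(A_q,\circ)$ is the order-$q^n$ cyclic group underlying group c), the conjugation action forces the image of $\alpha'$ to have order $q$, so $\ker\alpha'$ is the unique subgroup of index $q$, which coincides with the additive (and multiplicative) ideal $q\mathbb{Z}/q^n\mathbb{Z}$; the quotient $A_q/\ker\alpha'$ has order $q$. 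Because $c\equiv 1\pmod q$, the automorphism $\nu$ induces the identity on this order-$q$ quotient, through which $\alpha'$ factors; hence $\alpha'\circ\nu=\alpha'$ and therefore $\alpha=\alpha'$. The delicate points to get right are thus the explicit description of the left brace automorphisms of $C(q,n,t)$ and the identification of $\ker\alpha'$ with $q\mathbb{Z}/q^n\mathbb{Z}$, which together force $\nu$ to act trivially exactly where $\alpha'$ can detect it.
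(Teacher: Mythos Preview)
Your proof is correct and follows essentially the same route as the paper: both reduce to showing that any brace isomorphism has the diagonal form $(a_1,a_2)\mapsto(k_1a_1,k_2a_2)$, extract from multiplicativity the relation $\alpha_{a_2}=\alpha'_{k_2a_2}$ together with the congruence $k_2\equiv 1\pmod{q^{n-t}}$, and then argue that this forces $\alpha=\alpha'$ because $\alpha'$ factors through a quotient on which the automorphism $a_2\mapsto k_2a_2$ acts trivially. The only cosmetic difference is that the paper phrases the last step via $k_2\in 1\circ\mathrm{Soc}(A_q)$ and $\mathrm{Soc}(A_q)\subseteq\ker\alpha'$, whereas you identify $\ker\alpha'$ directly with $q\mathbb{Z}/q^n\mathbb{Z}$ and check that $\nu$ is the identity modulo~$q$; these are the same observation in different clothing.
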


\begin{proof}
At first, recall that by \cite[Theorem 2]{rump2007classification} $(A_q,+)$ is a cyclic group, therefore there exist $t\in \{1,...,n-1\}$ such that $A_q=C(q,t,n)$. 
If $\alpha=\alpha'$ the left braces $A_p\rtimes_{\alpha} A_q $ and $A_p\rtimes_{\alpha'} A_q$ clearly are isomorphic.\\
Conversely, suppose that $A_p\rtimes_{\alpha} A_q $ and $A_p\rtimes_{\alpha'} A_q$ are isomorphic as left braces and let $\psi$ be an isomorphism from $A_p\rtimes_{\alpha} A_q $ to $A_p\rtimes_{\alpha'} A_q$. Since $\psi$ must be an isomorphism between the additive groups, there exist two numbers $k_1,k_2$ such that $\psi(a_1,a_2)=(k_1 a_1,k_2 a_2)$ for all $(a_1,a_2)\in A_p\times  A_q$. Since $\psi$ also is an isomorphism between the multiplicative group, we have that
$$(k_1(a_1+\alpha_{a_2}(b_1)),k_2(a_2+b_2+q^ta_2 b_2))=(k_1 a_1+\alpha'_{k_2a_2}(k_1b_1),k_2a_2+k_2b_2+k_2^{2}q^ta_2b_2) $$
for all $(a_1,a_2),(b_1,b_2)\in A_p\rtimes_{\alpha} A_q$. Comparing the second components, we obtain that there exist $s,s'\in Soc(A_q)$ such that $k_2=1+s=1\circ s'$. Now, comparing the first components, we obtain that
$$k_1 \alpha_{a_2}(b_1)=k_1\alpha'_{k_2 a_2}(b_1) $$
for all $(a_1,a_2),(b_1,b_2)\in A_p\times A_q $. If we set $a_2=1$ (in $ A_q$) and $b_1=1$ (in $A_p$) we obtain that $k_1 \alpha_{1}(1)=k_1\alpha'_{k_2 }(1) $ that implies $\alpha_{1}(1)=\alpha'_{k_2 }(1) $. Since $k_2=1\circ s'$ with $s'\in Soc(A_q)$ and $Soc(A_q)\subseteq Ker(\alpha)$ (because $A_q$ is a non-trivial left brace and $Ker(\alpha) $ has index $q$ in the cyclic group $(A_q,\circ)$), we have that $\alpha_{1}(1)=\alpha'_{1 }(1) $, that implies $\alpha_1=\alpha'_1$. Since the multiplicative group of $A_q$ is generated by $1$, we obtain $\alpha=\alpha'$.
\end{proof}



\noindent Now, we can give the desired classification.

\begin{cor}\label{minind2}
Let $n$ be a natural number and $p,q$ be distinct prime numbers with $q<p$,
$A_p$ be the trivial left brace of size $p$, $A_q$ a trivial left brace with cyclic multiplicative group of size $q^n$ and $\alpha:(A_q,\circ)\longrightarrow Aut(A_p,+)$ be a homomorphism such that the multiplicative group of the left braces semidirect product $A_p\rtimes_{\alpha} A_q$ is isomorphic to the group c). Moreover, suppose that $(a_1,a_2),(b_1,b_2)\in A_p\rtimes_{\alpha} A_q$ are elements of two (not necessarily distinct) transitive cycle bases and set $(X_1,r_1)$ (resp. $(X_2,r_2)$) the indecomposable involutive solution obtained using $(a_1,a_2)$ (resp. $(b_1,b_2)$) and the trivial subgroup $K:=\{(0,0)\}$ of $(A_p\rtimes_{\alpha} A_q,\circ)$ in \cref{cosmod}. Then, $(X_1,r_1)$ and $(X_2,r_2)$ are isomorphic if and only if $a_2\equiv b_2$ $(mod\; q)$.
\end{cor}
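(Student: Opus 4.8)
The plan is to invoke the isomorphism criterion \cref{bachi}. Since both solutions are uniconnected (by \cref{uniconnmin}) and are built with the trivial subgroup $K=\{(0,0)\}$, the condition on the subgroups is automatic, so $(X_1,r_1)$ and $(X_2,r_2)$ are isomorphic if and only if there exist $z\in B$ and $\psi\in\Aut(B,+,\circ)$ with $\psi((a_1,a_2))=\lambda_z((b_1,b_2))$, where $B:=A_p\rtimes_\alpha A_q$. Everything then reduces to three ingredients: the $\lambda$-action on $B$, the automorphism group $\Aut(B,+,\circ)$, and the constraints placed on $(a_1,a_2)$ and $(b_1,b_2)$ by lying in transitive cycle bases.

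First I would compute the $\lambda$-action. Using that $A_p$ and $A_q$ are trivial left braces and that the multiplicative operation of the semidirect product is $(x_1,x_2)\circ(y_1,y_2)=(x_1+\alpha_{x_2}(y_1),x_2+y_2)$, a direct computation gives $\lambda_{(z_1,z_2)}((y_1,y_2))=(\alpha_{z_2}(y_1),y_2)$; thus $\lambda$ fixes the second coordinate and acts on the first through $\alpha$. Next I would determine $\Aut(B,+,\circ)$. Since $(B,+)\cong\mathbb{Z}/p\mathbb{Z}\times\mathbb{Z}/q^n\mathbb{Z}$ has coprime factors, every additive automorphism has the form $(x_1,x_2)\mapsto(k_1x_1,k_2x_2)$ with $k_1\in(\mathbb{Z}/p\mathbb{Z})^\ast$ and $k_2\in(\mathbb{Z}/q^n\mathbb{Z})^\ast$; imposing compatibility with $\circ$ reduces, exactly as in the proof of \cref{minbrace2}, to the identity $\alpha_{x_2}=\alpha_{k_2x_2}$ for all $x_2$, which holds if and only if $k_2\equiv 1\pmod q$ (because $\ker\alpha$ is the index-$q$ subgroup of the cyclic group $A_q$). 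Hence $\psi((a_1,a_2))=(k_1a_1,k_2a_2)$ with $k_1$ an arbitrary unit mod $p$ and $k_2$ a unit mod $q^n$ congruent to $1$ modulo $q$.

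Then I would record the cycle-base constraints. Since the $\lambda$-orbit of $(a_1,a_2)$ must additively generate $B$, projecting to the second factor forces $a_2$ to generate $\mathbb{Z}/q^n\mathbb{Z}$ (so $a_2$ is a unit mod $q^n$), while taking differences inside the orbit forces $a_1\neq 0$; likewise for $(b_1,b_2)$. Combining with the two preceding steps, the equation $\psi((a_1,a_2))=\lambda_z((b_1,b_2))$ splits into the first-coordinate equation $k_1a_1=\alpha_{z_2}(b_1)$ and the second-coordinate equation $k_2a_2=b_2$. The first is always solvable: choosing $z_2=0$ and $k_1=b_1a_1^{-1}$ works, since $a_1,b_1$ are nonzero in the field $\mathbb{Z}/p\mathbb{Z}$. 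So the problem collapses to solving $k_2a_2=b_2$ subject to $k_2\equiv 1\pmod q$; as $a_2,b_2$ are units mod $q^n$, the unique solution is $k_2=b_2a_2^{-1}$, and the requirement $k_2\equiv 1\pmod q$ is equivalent to $a_2\equiv b_2\pmod q$. This yields both implications.

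The step I expect to be the main obstacle is the precise description of $\Aut(B,+,\circ)$: one must verify that no additive automorphism mixes the two coprime factors (immediate from coprimality) and, more delicately, that brace-compatibility amounts to exactly the congruence $k_2\equiv 1\pmod q$, neither stronger nor weaker. Once the $\lambda$-formula and this automorphism description are in place, the rest is bookkeeping.
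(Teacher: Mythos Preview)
Your argument is correct. The $\lambda$-computation, the description of $\Aut(B,+,\circ)$ via the congruence $k_2\equiv 1\pmod q$, the cycle-base constraints ($a_1,b_1\neq 0$ and $a_2,b_2$ units mod $q^n$), and the splitting of the equation $\psi((a_1,a_2))=\lambda_z((b_1,b_2))$ into two coordinates are all handled correctly, and together they establish both implications.

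However, your route differs from the paper's. The paper does not redo these computations; it simply observes that $A_p\rtimes_\alpha A_q$ is a cyclic left brace and then invokes \cite[Theorem~5.8]{castelli2021classification} as a black box, which gives the isomorphism criterion $a_2\equiv b_2\pmod{q^z}$ with $z=\min\{1,t\}$; since $A_q$ is the trivial brace one has $t=n$, hence $z=1$. For $q=2$ the cited theorem does not literally apply, and the paper just asserts that the same argument goes through. Your approach, by contrast, is self-contained (no external theorem beyond \cref{bachi}) and treats all primes $q$ uniformly. The cost is that you are essentially reproving a special case of \cite[Theorem~5.8]{castelli2021classification} from scratch; the benefit is transparency and the fact that your computation of $\Aut(B,+,\circ)$ is exactly parallel to the one in \cref{minbrace2}, so it fits naturally in the paper without an external citation.
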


\begin{proof}
Since $A_p$ and $A_q$ are cyclic left braces, then so $A_p\rtimes_{\alpha} A_q$ is. If $q\neq 2$, by \cite[Theorem 5.8]{castelli2021classification} $(X_1,r_1)$ and $(X_2,r_2)$ are isomorphic if and only if $a_2\equiv b_2$ $(mod\; q^z)$, where $z=min\{1,t\}$. Since $A_q$ is trivial we have that $t=n$, hence in this case the thesis follows. If $q=2$, even if we can not use \cite[Theorem 5.8]{castelli2021classification}, since $A_p\rtimes_{\alpha} A_q $ is a cyclic left brace the thesis can be showed in the same way.
\end{proof}

\begin{cor}\label{minind1}
Let $n$ be a natural number and $p,q$ be distinct prime numbers with $q<p$ such that $(q,n)\neq (2,2)$, $A_p$ be the trivial left brace of size $p$, $A_q$ a non-trivial left brace with cyclic multiplicative group of size $q^n$ and $\alpha:(A_q,\circ)\longrightarrow Aut(A_p,+)$ be a homomorphism such that the multiplicative group of the left braces semidirect product $A_p\rtimes_{\alpha} A_q$ is isomorphic to the group c). Moreover, suppose that  $(a_1,a_2),(b_1,b_2)\in A_p\rtimes_{\alpha} A_q$ are elements of two (not necessarily distinct) transitive cycle bases and set $(X_1,r_1)$ (resp. $(X_2,r_2)$) the indecomposable involutive solution obtained using $(a_1,a_2)$ (resp. $(b_1,b_2)$) and the trivial subgroup $K:=\{(0,0)\}$ of $(A_p\rtimes_{\alpha} A_q,\circ)$ in \cref{cosmod}. Then, $(X_1,r_1)$ and $(X_2,r_2)$ are isomorphic if and only if $a_2\equiv b_2$ $(mod\; q^z)$, where $z=min\{n-t,t\}$ and $q^t$ is the size of $Soc(A_q)$.
\end{cor}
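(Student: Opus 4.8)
The plan is to run the same machinery used for \cref{minind2}, keeping track of the fact that now $A_q$ is a nontrivial cyclic left brace $C(q,n,t)$ with $|Soc(A_q)|=q^t$. Write $B:=A_p\rtimes_\alpha A_q$; its additive group is $\mathbb{Z}/p\mathbb{Z}\times\mathbb{Z}/q^n\mathbb{Z}\cong\mathbb{Z}/pq^n\mathbb{Z}$, so $B$ is a cyclic left brace and, by \cref{solcyc} (or directly by \cref{uniconnmin}), the solutions it produces are uniconnected. Hence by \cref{remark11} the subgroup $K$ is forced to be trivial, and the isomorphism test of \cref{bachi} collapses to the following: $(X_1,r_1)\cong(X_2,r_2)$ if and only if there is a left brace automorphism $\psi$ of $B$ and an element $w\in B$ with $\psi(a_1,a_2)=\lambda_w(b_1,b_2)$.

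Next I would compute the two objects entering this test. A direct calculation gives $\lambda_{(w_1,w_2)}(b_1,b_2)=(\alpha_{w_2}(b_1),(1+q^t w_2)b_2)$, so the $\lambda$-orbit of $(b_1,b_2)$ has second coordinates $\{(1+q^t w_2)b_2:w_2\in A_q\}=U_2\,b_2$, where $U_2:=\{u\in(\mathbb{Z}/q^n\mathbb{Z})^\times: u\equiv 1\pmod{q^t}\}$, the map $a\mapsto 1+q^t a$ being a homomorphism with image $U_2$ and kernel $Soc(A_q)$. On the automorphism side, since $p$ and $q$ are coprime every additive automorphism of $B$ is diagonal, $\psi(a_1,a_2)=(k_1 a_1,k_2 a_2)$; imposing compatibility with $\circ$ forces $k_1\in(\mathbb{Z}/p\mathbb{Z})^\times$ to be arbitrary and $k_2$ to be an automorphism of $C(q,n,t)$, i.e. $k_2\in U_1:=\{u: u\equiv 1\pmod{q^{n-t}}\}$. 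The compatibility of $\psi$ with $\alpha$, which amounts to $k_2\equiv 1\pmod q$, is then automatic, since $k_2\in U_1$ and $n-t\ge 1$.

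Assembling these, and using that $(a_1,a_2)$ and $(b_1,b_2)$ lie in transitive cycle bases—so that $a_1,b_1\neq 0$ and $a_2,b_2$ are units modulo $q$—the first coordinate equation $k_1 a_1=\alpha_{w_2}(b_1)$ can always be solved for $k_1$ once $w_2$ is fixed, and the test reduces to the second coordinate: there must exist $k_2\in U_1$ and $w_2\in A_q$ with $k_2 a_2=(1+q^t w_2)b_2$, equivalently $a_2 b_2^{-1}\in U_1 U_2$. Because the principal congruence subgroups $U_1,U_2$ of $(\mathbb{Z}/q^n\mathbb{Z})^\times$ are nested, their product is $U_1 U_2=\{u:u\equiv 1\pmod{q^{\min\{n-t,t\}}}\}$; since $a_2,b_2$ are units this membership is precisely $a_2\equiv b_2\pmod{q^{\min\{n-t,t\}}}$, which is the claimed criterion with $z=\min\{n-t,t\}$.

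I expect the delicate point to be the determination of $Aut(B,+,\circ)$—in particular verifying that the only binding constraint on $k_2$ is $k_2\equiv 1\pmod{q^{n-t}}$ (the $\alpha$-compatibility being subsumed) and that there are no off-diagonal automorphisms—together with the clean identification of $U_1 U_2$ inside the unit group. The remaining care is to invoke the transitive-cycle-base hypothesis at exactly the right moment, since it is this that turns the multiplicative relation $a_2 b_2^{-1}\in U_1 U_2$ into the stated congruence; the case $q=2$ (with $n\neq 2$) needs no separate treatment, because the congruence subgroups remain nested even though $(\mathbb{Z}/2^n\mathbb{Z})^\times$ is not cyclic.
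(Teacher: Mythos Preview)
Your argument is correct. The paper's own proof is a three–line reduction to \cite[Theorem~5.8]{castelli2021classification}: it notes that $A_p\rtimes_\alpha A_q$ is a cyclic left brace, that $Soc(A_q)\subseteq\ker\alpha$ (since $A_q$ is nontrivial and $\ker\alpha$ has index $q$), and then invokes that external theorem, remarking separately that for $q=2$ the cited result does not formally apply but its proof still goes through.

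Your route is the same in substance but carried out explicitly rather than by citation: you compute $Aut(B,+,\circ)$ and the $\lambda$-orbits by hand and read off the congruence condition from $U_1U_2$. What this buys you is a self-contained argument that treats the case $q=2$ uniformly (the nestedness of the principal congruence subgroups does not use cyclicity of $(\mathbb{Z}/q^n\mathbb{Z})^\times$), and it makes transparent \emph{why} the $\alpha$-compatibility constraint is absorbed by $k_2\in U_1$—namely, $|Aut(A_q,+,\circ)|=q^t$ is coprime to $|Aut(\mathbb{Z}/q\mathbb{Z})|=q-1$, which is exactly the mechanism hidden behind the paper's observation $Soc(A_q)\subseteq\ker\alpha$. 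The paper's approach is more economical if one accepts the external reference; yours is the unpacked version of that reference in this special case.
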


\begin{proof}
By \cite[Theorem 2]{rump2019classification} $A_p$ and $A_q$ are cyclic left braces, then so $A_p\rtimes_{\alpha} A_q$ is. Since $A_q$ is not trivial and $Ker(\alpha)$ is a subgroup of $A_q$ having index $q$, we have that $Soc(A_q)\subseteq Ker(\alpha)$, therefore if $q\neq 2$ the thesis follows by \cite[Theorem 5.8]{castelli2021classification}. Even if the case $q=2$ is not covered by \cite[Theorem 5.8]{castelli2021classification}, since $A_p\rtimes_{\alpha} A_q$ is a cyclic left brace, the thesis can be showed in the same way.
\end{proof}


\noindent The remaining case is $|X|=4p$, that we consider below.

\begin{cor}\label{minind3}
Let $p$ be a prime number different from $2$. Then, are two indecomposable solutions $(X,r)$ such that $\mathcal{G}(X,r)$ is isomorphic to the minimal non-cyclic group of size $4p$.
\end{cor}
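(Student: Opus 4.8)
The plan is to reduce the statement to a classification of the left braces $B$ whose multiplicative group is the prescribed group, and then to count, for each such $B$, the uniconnected solutions it produces. First I would observe that the minimal non-cyclic group of order $4p$ is exactly the type c) group of \cref{gruc} with $q=2$ and $n=2$; here the condition $r^{2}\equiv_{p}1$ together with non-cyclicity forces $r\equiv_{p}-1$, so the relation is $b^{-1}ab=a^{-1}$ and the complement acts on the cyclic normal subgroup by inversion. By \cref{uniconnmin} every solution with this permutation group is uniconnected, so by \cref{cosmod}, \cref{bachi} and \cref{remark11} its isomorphism class is determined by a transitive cycle base of $B=\mathcal{G}(X,r)$ (a single $\lambda$-orbit generating $(B,+)$), taken modulo the action of $\Aut(B,+,\circ)$.

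Next I would classify the admissible braces. As in the proof of \cref{uniconnmin}, such a $B$ is a semidirect product $A_p\rtimes A_2$, where $A_p$ is the trivial brace of order $p$ and $A_2$ is a brace of order $4$ whose multiplicative group is the complement $\mathbb{Z}/4\mathbb{Z}$, acting on $A_p$ by inversion. There are exactly two braces of order $4$ with multiplicative group $\mathbb{Z}/4\mathbb{Z}$: the trivial brace $C(2,2,2)$, with additive group $\mathbb{Z}/4\mathbb{Z}$, and a unique non-trivial brace $N$ with additive group $(\mathbb{Z}/2\mathbb{Z})^{2}=\{0,x,y,x+y\}$, determined by the requirement that a multiplicative generator act through $\lambda^{N}$ as the transposition $\tau$ interchanging $x$ and $y$. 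These give exactly two candidate permutation braces: the cyclic brace $B_1=A_p\rtimes C(2,2,2)$, with additive group $\mathbb{Z}/4p\mathbb{Z}$, and $B_2=A_p\rtimes N$, with additive group $\mathbb{Z}/2\mathbb{Z}\times\mathbb{Z}/2p\mathbb{Z}$.

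For $B_1$ I would invoke \cref{minind2} directly: two solutions are isomorphic precisely when their base points have $A_2$-components congruent modulo $2$, and since a transitive cycle base forces this component to generate $\mathbb{Z}/4\mathbb{Z}$ (hence to be odd), all base points are equivalent and $B_1$ contributes a single solution. For $B_2$, which is not cyclic and so is outside the scope of \cref{minind1} and \cref{minind2}, I would argue by hand. Using $\lambda_{(a_1,a_2)}(b_1,b_2)=(\alpha_{a_2}(b_1),\lambda^{N}_{a_2}(b_2))$, the $\lambda$-orbit of $(b_1,b_2)$ is $\{(b_1,b_2),(-b_1,\tau(b_2))\}$, and such an orbit generates $(B_2,+)$ exactly when its two second coordinates span $(\mathbb{Z}/2\mathbb{Z})^{2}$ and its first coordinates span $A_p$; this singles out the $p-1$ transitive cycle bases $\{(c,x),(-c,y)\}$ with $c\neq 0$. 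Since both Sylow components of $(B_2,+)$ are characteristic, every brace automorphism has the form $\psi_p\times\psi_2$ with $\psi_p\in\Aut(A_p)$ and $\psi_2\in\Aut(N)$, and already the subgroup $\Aut(A_p)\cong(\mathbb{Z}/p\mathbb{Z})^{\ast}$, acting by $c\mapsto uc$, permutes these $p-1$ cycle bases transitively; hence $B_2$ also contributes a single solution.

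Finally I would conclude that the two solutions coming from $B_1$ and $B_2$ are non-isomorphic, because $B_1$ and $B_2$ have non-isomorphic additive groups and the permutation left brace is an isomorphism invariant of a solution; this yields exactly two solutions. The delicate point is the treatment of $B_2$: recognising the non-trivial brace $N$ of order $4$ with cyclic multiplicative group, and then computing both its transitive cycle bases and the action of $\Aut(B_2,+,\circ)$ on them, since none of the earlier corollaries applies to a non-cyclic permutation brace.
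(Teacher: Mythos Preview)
Your proof is correct and follows the same overall strategy as the paper: reduce to uniconnected solutions via \cref{uniconnmin}, split into the two possible braces $A_p\rtimes A_2$ according to whether $A_2$ is trivial or not, handle the cyclic case $B_1$ through \cref{minind2}, and treat the exceptional brace $B_2$ by hand using \cref{bachi}. The two arguments differ only in presentation of the last step: the paper fixes a basepoint $(a,b)$ and uses the automorphism part of \cref{bachi} to eliminate the dependence on $a$ and the $\lambda$-part (moving inside the orbit) to eliminate the dependence on $b$, whereas you enumerate the $p-1$ transitive cycle bases as sets and let $\Aut(A_p)$ act transitively on them; these are equivalent readings of \cref{bachi}. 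Your explicit identification of the non-trivial $A_2$ as the brace with additive group $(\mathbb{Z}/2\mathbb{Z})^2$ (rather than a cyclic brace $C(2,2,t)$) is the correct one and clarifies why the case $(q,n)=(2,2)$ escapes \cref{minind1}.
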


\begin{proof}
By \cref{uniconnmin}, $(X,r)$ is uniconnected. The permutation left brace $\mathcal{G}(X,r)$ is a semidirect product of the left brace $A_2$ and the left brace $A_p$. If $A_2$ is a trivial left brace, then by \cref{cosmod} and \cref{minind2} we have a unique indecomposable solution provided by $\mathcal{G}(X,r)$, using a generator $(a,b)$ of $(A_p,+)\times (A_2,+)$.
Now, suppose that $A_2$ is the unique non-trivial left brace having size $4$ and with cyclic multiplicative group. 
Hence, the semidirect product is completely determined by the unique homomorphism that sends a generator (which we call $1$) of $(A_2,\circ)$ to the unique homomorphism having order $2$ of $A_p$. Now, let $(a,b)\in A_p\times A_2$ an element of a transitive cycle base of $A$ such that $(X,r)$ is isomorphic to the indecomposable solution obtained by $(a,b)$ as in \cref{cosmod}. Then, $b$ must be one of the two generators of $ (A_2,\circ)$. Since the function from $A_p\rtimes A_2$ to itself given by $\psi(x,y)=(\beta(x),y)$ for some $\beta\in Aut(A_p,+)$ is an automorphism of the left brace $A_p\rtimes A_2$, by \cref{bachi} the isomorphism class of $(X,r)$ does not depend on $a$. Moreover, $\lambda_{(0,1)}(a,b)$ belongs to $A_p\times \{b+2\}$, therefore we have that by \cref{bachi} the isomorphism class of $X$ does not depend on $b$. Hence, we showed that in this case also we provide a unique solution.
\end{proof}

\noindent Therefore, to find \emph{all} the indecomposable solutions with minimal non cyclic group of type c), we can apply the following steps:
\begin{itemize}
    \item[1)] Use \cref{minbrace1} and \cref{minbrace2} to construct, up to isomorphisms, all the suitable left braces.
    \item[2)] Use \cref{minind2}, \cref{minind1} and \cref{minind3} to provide all the non-isomorphic indecomposable solutions having a fixed left brace of the step 1).
\end{itemize}

\noindent In the last theorem, we summarize all the results of this section.

\begin{theor}
    Let $p,q$ distinct prime numbers, $n$ a natural number and suppose that $G$ is a minimal non-cyclic group having size $pq^n$ and of type c). Then, every indecomposable solution with permutation group isomorphic to $G$ is uniconnected. Except the case $q^n=4$, every indecomposable solutions $(X,r)$ with $\mathcal{G}(X,r)\cong G$ are obtained by means of the left braces provided in \cref{minbrace1} and \cref{minbrace2}, using \cref{cosmod}. In particular, we have:
    \begin{itemize}
        \item[-] $1$ solution, if $q^n=4$ and $\mathcal{G}(X,r)\cong C(p,1,1)\rtimes C(2,2,2) $;
        \item[-] $1$ solution, if $q^n=4$ and $\mathcal{G}(X,r)\cong C(p,1,1)\rtimes C(2,2,1) $;
        \item[-] $q-1$ solutions, if $q^n\neq 4$ and $\mathcal{G}(X,r)\cong C(p,1,1)\rtimes C(q,n,n) $;
        \item[-] $q^z-1$ solutions, if $q^n\neq 4$ and $\mathcal{G}(X,r)\cong C(p,1,1)\rtimes C(q,n,t) $ for some $t\in \{1,...,n-1 \}$, with $z=min\{n-t,t\}$.
    \end{itemize}
\end{theor}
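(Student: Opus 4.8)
The plan is to prove the final theorem as a synthesis of the results already established in this section, so the proof is essentially a matter of assembling the pieces rather than of new calculation. First I would dispatch the uniconnectedness claim: since $G$ is a minimal non-cyclic group of type c), \cref{uniconnmin} immediately gives that every indecomposable solution with $\mathcal{G}(X,r)\cong G$ is uniconnected. This reduces the whole problem to \emph{counting} the isomorphism classes, and by \cref{cosmod} together with \cref{remark11} (the uniconnected case, where the core-free subgroup $K$ must be trivial) each such class is produced from the data of the permutation left brace $B$ with $(B,\circ)\cong G$ together with an orbit representative $(a_1,a_2)$ of a transitive cycle base, taken with $K=\{(0,0)\}$.

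Next I would separate the argument according to whether $q^n=4$ or not, because the case $(q,n)=(2,2)$ is exactly the exceptional case excluded from \cref{minbrace2} and \cref{minind1}. For $q^n\neq 4$, I would invoke \cref{minbrace1} and \cref{minbrace2} to enumerate the admissible left braces $B$ whose multiplicative group is isomorphic to $G$: these are precisely the semidirect products $A_p\rtimes_\alpha A_q$, where $A_p$ is the trivial left brace of size $p$ and $A_q$ is a left brace with cyclic multiplicative group of size $q^n$, and \cref{minbrace2} tells us that non-isomorphic choices correspond to distinct $\alpha$ (equivalently, to the value of $t$ determining $A_q=C(q,n,t)$). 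Then, for a fixed such $B$, the number of non-isomorphic solutions is counted by \cref{minind2} and \cref{minind1}: when $A_q$ is trivial ($t=n$, giving $B\cong C(p,1,1)\rtimes C(q,n,n)$), \cref{minind2} says two solutions coincide iff $a_2\equiv b_2 \pmod{q}$, so the admissible second coordinates (the generators of the multiplicative group, reduced mod $q$) split into $q-1$ classes; when $A_q$ is non-trivial ($1\le t\le n-1$), \cref{minind1} says coincidence holds iff $a_2\equiv b_2\pmod{q^z}$ with $z=\min\{n-t,t\}$, yielding $q^z-1$ classes. The key bookkeeping point here is that the admissible $a_2$ must be an additive generator of the $A_q$-component (so that $(a_1,a_2)$ lies in a transitive cycle base, by \cite[Lemma 3.2]{castelli2021classification}), and the count of generators modulo $q^z$ gives exactly $q^z-1$ (respectively $q-1$).

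For the excluded case $q^n=4$, which corresponds to $\mathcal{G}(X,r)\cong C(p,1,1)\rtimes C(2,2,t)$ for $t\in\{1,2\}$, I would instead appeal directly to \cref{minind3}, which asserts that there are exactly two indecomposable solutions with minimal non-cyclic permutation group of size $4p$—one for the trivial $A_2$ (i.e. $C(2,2,2)$) and one for the non-trivial $A_2$ (i.e. $C(2,2,1)$). This supplies the two ``$1$ solution'' entries in the stated list and completes the tabulation.

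I expect the main obstacle to be not any single deep step but the careful matching of notation and hypotheses across the three counting corollaries: in particular, verifying that the parameters $(q,n,t)$ in the left-brace notation $C(q,n,t)$ line up correctly with the socle size $q^t$ used in \cref{minind1}, and ensuring the exceptional case $(q,n)=(2,2)$ is handled by \cref{minind3} rather than by the general corollaries (which explicitly exclude it). Once these alignments are pinned down, each bullet of the theorem is a direct restatement of the corresponding corollary, and the proof reduces to citing \cref{uniconnmin,minbrace1,minbrace2,minind1,minind2,minind3} in the appropriate order.
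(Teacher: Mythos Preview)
Your proposal is correct and matches the paper's approach exactly: the paper's own proof is the single line ``It follows by all the previous results of this section,'' and you have simply spelled out which results (\cref{uniconnmin}, \cref{minbrace1}, \cref{minbrace2}, \cref{minind2}, \cref{minind1}, \cref{minind3}) are invoked for each clause and in what order. If anything, your write-up is more explicit than the paper's about how the pieces fit together.
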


\begin{proof}
    It follows by all the previous results of this section.
\end{proof}

\section{Dehornoy's class and indecomposable involutive solutions}

In this section, which is independent from the previous ones, we focus on the Dehornoy's class of involutive solutions (not necessarily indecomposable). In \cite{feingesicht2023dehornoy} the following conjectures were stated.


\begin{conj}[Conjecture 3.4, \cite{feingesicht2023dehornoy}]\label{conge1}
Let $(X,r)$ be an indecomposable involutive solution. Then, the Dehornoy's class is bounded by the size of $X$.
\end{conj}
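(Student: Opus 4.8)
The plan is to connect the Dehornoy's class directly to the structure of the permutation left brace $\mathcal{G}(X,r)$. As announced in the introduction, the key intermediate result will be that for any involutive solution $(X,r)$, the Dehornoy's class equals the maximum additive order $\max_{a}o(a)_+$ of an element in the permutation left brace $(\mathcal{G}(X,r),+,\circ)$. Granting this characterization, proving \cref{conge1} reduces to showing that when $(X,r)$ is indecomposable, the maximum additive order in $\mathcal{G}(X,r)$ is bounded above by $|X|$. I would first unwind the definition of $\Omega_n$ in terms of the $\lambda$-action: since $x\cdotp y=\sigma_x^{-1}(y)=\lambda_x^{-1}(y)$ in the permutation brace, the iterated expression $\Omega_{n+1}(x,\ldots,x,y)$ should telescope into an application of a map built from the additive structure, so that the condition $\Omega_{n+1}(x,\ldots,x,y)=y$ for all $x,y$ becomes precisely the statement that $n$ annihilates every element additively, i.e. that $n$ is a multiple of the additive exponent. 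This identifies the Dehornoy's class with the additive exponent, which is $\max_a o(a)_+$ since $(\mathcal{G}(X,r),+)$ is abelian.

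The second half of the argument is where the indecomposability hypothesis enters. I would invoke \cref{cosmod}: an indecomposable solution arises from a transitive cycle base $Y\subset B$ in the brace $B=\mathcal{G}(X,r)$, with $X$ identified with $B/K$ for a core-free $K\subseteq St(a_1)$, so that $|X|=[B:K]$. The transitive cycle base $Y$ additively generates $(B,+)$ and $B$ acts transitively on $Y$ by the $\lambda$-action, meaning $Y$ is a single $\lambda$-orbit; its size is $[B:St(a_1)]_\circ$, which divides $|X|=[B:K]$ since $K\subseteq St(a_1)$. The heart of the matter is then to bound the additive order of an arbitrary element of $B$ by $|Y|$, or more directly by $|X|$. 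Here I expect to use that $Y$ is a $\lambda$-orbit of a single element $a_1$ that additively generates $B$: the elements $\lambda_x(a_1)$, as $x$ ranges over $B$, are exactly the points of $Y$, and any element of $B$ is an integer combination of these generators.

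The main obstacle, and the step requiring genuine care, is controlling the additive order from the orbit structure. The cleanest route I foresee is to show that the additive order of the generator $a_1$ (equivalently, of any element of the transitive cycle base, which all share the same additive order by \cref{remark1}) is exactly $|Y|$, and that $|Y|$ divides $|X|$. For the first claim one can argue that the cyclic additive subgroup $\langle a_1\rangle_+$ is $\lambda$-invariant — since the $\lambda$-maps permute $Y$ and $Y$ is a single orbit, one relates the additive order of $a_1$ to the length of the orbit through the compatibility between $+$ and $\circ$ furnished by the left brace axiom. Because $(B,+)$ is abelian its exponent equals $\max_a o(a)_+$, and one must confirm that no element can exceed the additive order of the generators of the transitive cycle base; this is plausible because $Y$ generates $(B,+)$ and lies in a single $\lambda$-orbit, so the additive exponent should not surpass $|Y|\le|X|$. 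Assembling these pieces, the Dehornoy's class equals the additive exponent, which is at most $|X|$, giving the conjectured bound. I would expect the delicate point to be the precise argument that the additive exponent does not exceed the common additive order of cycle-base elements, rather than merely being bounded by $|B|$; the indecomposability, via the transitive $\lambda$-action on a generating cycle base, is exactly what upgrades a trivial bound to the sharp bound by $|X|$.
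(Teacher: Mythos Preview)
Your first half is correct and matches the paper: \cref{lcm} shows that the Dehornoy's class equals the least common multiple of the additive orders of the chosen cycle-base representatives, which for an indecomposable solution reduces to $d=o(a_1)_+$; combined with \cref{exp2} this is precisely the additive exponent of $B$. Your observation that all elements of the transitive cycle base $Y$ share the same additive order, and that since $Y$ additively generates $B$ this common order equals $\exp(B,+)$, is also sound.

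The gap is in the second half. You propose to show that $o(a_1)_+=|Y|$ (or at least $o(a_1)_+\le |Y|$), and then use $|Y|\le |X|$. But the equality $o(a_1)_+=|Y|$ is false in general. Take $B$ to be the trivial left brace on $\mathbb{Z}/n\mathbb{Z}$: then $\lambda$ is the identity, every $\lambda$-orbit is a singleton, and $Y=\{a_1\}$ has $|Y|=1$, whereas $o(a_1)_+=n$. In this example the conjecture still holds only because $St(a_1)=B$ forces the core-free subgroup $K$ to be trivial, so $|X|=|B|=n$; the inequality $d\le |X|$ is rescued not by the size of $Y$ but by the uniconnectedness forced in this particular case. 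Your sketch offers no mechanism that would, in general, bound $o(a_1)_+$ by $[B:K]$ when $K$ is a nontrivial core-free subgroup of $St(a_1)$.

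This is not a fixable detail: the paper does \emph{not} prove \cref{conge1} in full generality. It is stated as a conjecture, and Section~5.2 establishes it only in special cases (uniconnected solutions via \cref{cyclicuni}, solutions of square-free size, solutions with generalized dihedral or quaternion permutation group, etc.), each time by exploiting extra structural information about $(B,+)$ or about $|X|$ versus $|B|$. Your proposal, by contrast, aims at a general argument that the paper does not claim to possess.
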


\begin{conj}[Conjecture 3.6, \cite{feingesicht2023dehornoy}]\label{conge2}
Let $(X,r)$ be an involutive solution having size $n$. Then, the Dehornoy's class is bounded by $a_n$, where 
$$a_n:=max\{\prod_{i=1}^k n_i|k\in \mathbb{N},\quad 1\leq n_1<...< n_k,\quad  n_1+...+n_k=n \}.$$
Moreover, the bound $a_n$ is minimal.
\end{conj}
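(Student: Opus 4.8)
The plan is to route everything through the section's main identification: for an involutive solution $(X,r)$ the Dehornoy's class equals $\max_{g\in\mathcal{G}(X,r)}o(g)_+$, i.e.\ the exponent $E$ of the abelian group $(\mathcal{G}(X,r),+)$. The statement then splits into an upper bound, $E\le a_n$ whenever $|X|=n$, and a minimality assertion, that $a_n$ cannot be replaced by any smaller function. I would first clear the uniconnected case, where $|X|=|\mathcal{G}(X,r)|\ge E$ gives $E\le n\le a_n$ directly (since $a_n\ge n$, via the one-part partition); all the difficulty lies in the regime $|X|<|\mathcal{G}(X,r)|$, where the core-free subgroup $K$ of \cref{cosmod} shrinks $X$ without shrinking the additive exponent.

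For the general upper bound I would use \cref{costruz} to present $X$ as a union of coset spaces $\mathcal{G}/K_{i,j}$ carried by a cycle base that additively generates $(\mathcal{G},+)$, and decompose $(\mathcal{G},+)$ into its Sylow left ideals $A_{p_1},\dots,A_{p_s}$, so that $E=\prod_{i}p_i^{e_i}$ with the $p_i^{e_i}$ pairwise distinct integers. The target $E\le a_n$ then becomes an extremal comparison of these prime-power exponents against the orbit sizes inside $X$: the idea is that an element of additive order $p^{e}$ cannot sit in a brace acting on too few points, because its additive cyclic subgroup forces a sub-solution, or an orbit, whose size is bounded below in terms of $p^{e}$, while the cycle-base and core-freeness constraints force the contributions attached to different primes to occupy largely separate room in $X$. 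Optimising the resulting size constraint over admissible tuples of distinct prime powers is precisely the combinatorial problem defining $a_n$.

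The hard part is making this size estimate sharp, and here there is a genuine subtlety. The estimate cannot take the naive form $\sum_i p_i^{e_i}\le n$: if that held universally, $E$ would be bounded by Landau's function $g(n)=\max\operatorname{lcm}$ over partitions of $n$, which is strictly smaller than $a_n$ in general (already $g(6)=6<8=a_6$), contradicting the minimality of $a_n$. Thus the attainment of $a_n$ has to come from solutions in which a single prime power, or a single cyclic additive layer, exceeds the number of points over which it is spread, and the correct estimate must encode how the core-free subgroups and the transitivity of the cycle base interact — input that the present techniques supply only for structured families (cyclic, primary cyclic, square-free, minimal non-cyclic), which is why one expects a uniform upper bound to stay out of reach and only an evidence-level statement to be provable here. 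The minimality half I would attack constructively: for each $n$, realise an optimal distinct-parts partition $n=n_1+\dots+n_k$ by assembling primary-cyclic pieces of sizes $n_i$ through \cref{costruz} into a size-$n$ solution whose additive exponent is exactly $\prod_i n_i=a_n$; checking that such an assembly exists for all $n$, especially when the optimal parts fail to be pairwise coprime, is the delicate endpoint of the argument and, together with the sharp size estimate, the principal obstacle to a complete proof.
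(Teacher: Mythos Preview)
The statement you are trying to prove is labelled \emph{Conjecture} in the paper, and the paper does not prove it. It is quoted verbatim from \cite{feingesicht2023dehornoy} as an open problem; the remainder of the section only supplies partial evidence (cyclic permutation group, a restricted class of abelian permutation groups, the $\lambda_a(a)=a$ case, and the crude bound $24^{(n-1)/3}$ coming from Dixon's theorem on solvable permutation groups). There is therefore no ``paper's own proof'' to compare your proposal against.

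Your proposal itself is not a proof but a strategic sketch, and you essentially concede this: you identify the decisive step as a size estimate relating additive prime-power exponents to orbit sizes in $X$, observe correctly that the naive inequality $\sum_i p_i^{e_i}\le n$ cannot hold (else the Landau bound $g(n)$ would follow, conflicting with minimality of $a_n$), and then say that the right estimate is available only for the structured families treated in the paper. That is an accurate diagnosis of why the conjecture is open, not a route to proving it. Likewise, your minimality plan---assembling primary-cyclic pieces along an optimal distinct-parts partition---runs into the very issue you flag: when the optimal parts share a prime, the additive exponent of the direct product is their lcm, not their product, so the construction does not realise $a_n$ in general. In short, there is no gap to name beyond the one you already name; the honest conclusion is that the statement remains conjectural, exactly as the paper presents it.
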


\noindent Some evidences to these conjectures, by basic examples and computer calculations, were given in \cite[Sections 3-4]{feingesicht2023dehornoy}. The aim of this section is to provide new estimations of the Dehornoy's class for several families of involutive solutions. These estimations will be used to prove \cref{conge1} and the bound part of \cref{conge2} in several cases.

\medskip

\begin{rem}\label{remark1}
 Using the same terminology of \cref{costruz}, recall that every involutive solution $(X,r)$ can be constructed by a quadruple $\{B,I,\mathcal{A},\mathcal{K} \} $. From now on, we will indicate this such a solution by $(X_{B,I,\mathcal{A},\mathcal{K}},r)$. Again by \cref{costruz}, it is not restrictive supposing that every solutions has the form $(X_{B,I,\mathcal{A},\mathcal{K}},r)$ for some suitable quadruple $\{B,I,\mathcal{A},\mathcal{K} \} $. Moreover, for indecomposable solutions, by \cite[Theorem 3]{rump2020classifi} we can choose a quadruple with $|I|=1$.
     
 \end{rem}

 \begin{conv}
     From now on, given an involutive solution $(X_{B,I,\mathcal{A},\mathcal{K}},r)$ we will indicate by $d$ its Dehornoy's class.
 \end{conv}

\subsection{Dehornoy's class and left braces}

\noindent This section is devoted to explain the precise link between an involutive solution $(X_{B,I,\mathcal{A},\mathcal{K}},r)$ and its permutation left brace $B$. Before giving the first result, we need the following lemma.

\begin{lemma}\label{ord}
For an involutive solution $(X_{B,I,\mathcal{A},\mathcal{K}},r)$ the equality
$$\Omega_n(x\circ K_{i_1,j_1},...,x\circ K_{i_1,j_1},y\circ K_{i_2,j_2})=(((n-1)\lambda_x(a_{i_1}))^-)\circ y \circ K_{i_2,j_2}$$
holds for all $n\in \mathbb{N}$ and $x\circ K_{i_1,j_1}, y\circ K_{i_2,j_2}\in X_{B,I,\mathcal{A},\mathcal{K}} $
\end{lemma}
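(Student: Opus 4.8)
The plan is to argue by induction on $n$, after first making explicit the binary operation $\cdot$ attached to the solution $(X_{B,I,\mathcal{A},\mathcal{K}},r)$. Starting from the defining rule $\sigma_{x\circ K_{i_x,j_x}}(y\circ K_{i_y,j_y})=\lambda_x(a_{i_x})\circ y\circ K_{i_y,j_y}$ of \cref{costruz}, I would invert $\sigma$ to obtain the formula $(x\circ K_{i_x,j_x})\cdot(y\circ K_{i_y,j_y})=\sigma_{x\circ K_{i_x,j_x}}^{-1}(y\circ K_{i_y,j_y})=(\lambda_x(a_{i_x}))^-\circ y\circ K_{i_y,j_y}$, which is independent of the chosen coset representatives precisely because the maps $\sigma$ are well defined on the quotients $B/K_{i,j}$. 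This single identity is the only structural input about the solution that the argument needs.

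For the base case $n=1$ there are no repeated entries, so $\Omega_1(y\circ K_{i_2,j_2})=y\circ K_{i_2,j_2}$; since $(n-1)\lambda_x(a_{i_1})=0$ and the additive and multiplicative identities coincide in a left brace, one has $0^-=\id_B$ and the right-hand side collapses to $y\circ K_{i_2,j_2}$, as required. Writing $g:=\lambda_x(a_{i_1})$, the inductive step for $n\ge 2$ would begin from the recursion $\Omega_n(x',\dots,x',w)=\Omega_{n-1}(x',\dots,x')\cdot\Omega_{n-1}(x',\dots,x',w)$, where $x'=x\circ K_{i_1,j_1}$ and $w=y\circ K_{i_2,j_2}$. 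Applying the induction hypothesis to the two inner $\Omega_{n-1}$ terms, the first having final entry $x'$ and the second having final entry $w$, produces $((n-2)g)^-\circ x\circ K_{i_1,j_1}$ and $((n-2)g)^-\circ y\circ K_{i_2,j_2}$ respectively. Note that both instances use the same repeated element $x'$, hence the same $g$ and the same additive multiple $n-2$.

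Feeding these two cosets into the operation $\cdot$ computed above, with first-factor representative $a:=((n-2)g)^-\circ x$ of type $i_1$, gives $(\lambda_a(a_{i_1}))^-\circ((n-2)g)^-\circ y\circ K_{i_2,j_2}$. The heart of the argument, and the step I expect to be the main obstacle, is to verify $(\lambda_a(a_{i_1}))^-\circ((n-2)g)^-=((n-1)g)^-$, equivalently $((n-2)g)\circ\lambda_a(a_{i_1})=(n-1)g$. Here I would use that $\lambda$ is a homomorphism $(B,\circ)\to\Aut(B,+)$, so that $\lambda_a=\lambda_{((n-2)g)^-}\lambda_x=\lambda_{(n-2)g}^{-1}\lambda_x$ and therefore $\lambda_{(n-2)g}(\lambda_a(a_{i_1}))=\lambda_x(a_{i_1})=g$; combined with the brace law $c\circ d=c+\lambda_c(d)$ this yields $((n-2)g)\circ\lambda_a(a_{i_1})=(n-2)g+g=(n-1)g$. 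Substituting back closes the induction and delivers $\Omega_n(x\circ K_{i_1,j_1},\dots,x\circ K_{i_1,j_1},y\circ K_{i_2,j_2})=((n-1)\lambda_x(a_{i_1}))^-\circ y\circ K_{i_2,j_2}$. The only real care needed is in tracking that the repeated-argument $\Omega_{n-1}$ term is itself an instance of the induction hypothesis (its last entry is again $x'$) and that all coset manipulations are representative-independent, which the well-definedness in \cref{costruz} guarantees.
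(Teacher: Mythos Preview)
Your argument is correct and follows essentially the same route as the paper's own proof: both proceed by induction on $n$, apply the inductive hypothesis to the two $\Omega_{n-1}$ terms, and then reduce the remaining computation to the single brace identity $z\circ\lambda_{z^-\circ w}(v)=z+\lambda_w(v)$ (which you derive inline from $c\circ d=c+\lambda_c(d)$ and the fact that $\lambda$ is a multiplicative homomorphism, whereas the paper simply quotes it). The only superficial differences are the index shift ($n-1\to n$ versus $n\to n+1$) and that the paper also lists $n=2$ among the base cases, which in your version is absorbed into the inductive step.
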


\begin{proof}
We show the thesis by induction on $n$. If $n\in \{1,2\}$, the thesis is clear. Now, suppose the thesis for $n$ and we show the thesis for $n+1$. Then

\begin{equation*}
\begin{split}
& \Omega_{n+1}(x\circ K_{i_1,j_1},...,x\circ K_{i_1,j_1},y\circ K_{i_2,j_2})  = \noindent \\
& =\Omega_{n}(x\circ K_{i_1,j_1},...,x\circ K_{i_1,j_1})\cdotp  \Omega_{n}(x\circ K_{i_1,j_1},...,y\circ K_{i_2,j_2}) \\
& = (((n-1)\lambda_x(a_{i_1}))^-)\circ x \circ K_{i_1,j_1})\cdotp (((n-1)\lambda_x(a_{i_1}))^-)\circ y \circ K_{i_2,j_2})\\
& = (\lambda_{((n-1)\lambda_x(a_{i_1}))^-)\circ x}(a_{i_1}))^{-}\circ (((n-1)\lambda_x(a_{i_1}))^-)\circ y \circ K_{i_2,j_2}) \\
& = ( ((n-1)\lambda_x(a_{i_1})) \circ \lambda_{((n-1)\lambda_x(a_{i_1}))^-)\circ x}(a_{i_1})  ))^{-} \circ y \circ K_{i_2,j_2}) \\
& = ( ((n-1)\lambda_x(a_{i_1})) + \lambda_{x}(a_{i_1})  ))^{-} \circ y \circ K_{i_2,j_2}) \\
& =  (n\lambda_x(a_{i_1})))^{-} \circ y \circ K_{i_2,j_2} \\
\end{split}
\end{equation*}
for all $x\circ K_{i_1,j_1}, y\circ K_{i_2,j_2}\in X_{B,I,\mathcal{A},\mathcal{K}}$, where in the first equality we used the inductive hypothesis and in the fifth equality we used the left brace equality $z\circ \lambda_{z^-\circ w}(v)=z+\lambda_w(v)$.
\end{proof}

\noindent In \cite[Theorem G]{lebed2022involutive}, it was shown that the class of an involutive solution $(X,r)$ is the least common multiple of the orders of the permutations $\sigma_x$ in the additive group of $(\mathcal{G}(X,r),+,\circ)$. Here, we recover this result showing the precise link between the orders of the $\sigma_x$ and the elements of the permutation left brace.

\begin{prop}\label{lcm}
The Dehornoy's class of $(X_{B,I,\mathcal{A},\mathcal{K}},r)$ is equal to the least common multiple of the additive orders of the $\{a_i\}_{i\in I}$ in $(B,+)$. 
\end{prop}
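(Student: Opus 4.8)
The plan is to reduce the Dehornoy's class to a purely brace-theoretic quantity by exploiting \cref{ord}, which already computes the iterated operation $\Omega_n$ explicitly on the cosets. Recall that by definition $(X_{B,I,\mathcal{A},\mathcal{K}},r)$ has Dehornoy's class $d$ when $d$ is the minimal non-negative integer such that $\Omega_{d+1}(x,\ldots,x,y)=y$ for all $x,y\in X$. The first step is therefore to substitute the formula of \cref{ord} into this defining condition. Writing $x=x\circ K_{i_1,j_1}$ and $y=y\circ K_{i_2,j_2}$, the equation $\Omega_{d+1}(x,\ldots,x,y)=y$ becomes
\begin{equation*}
\left(\left(d\,\lambda_x(a_{i_1})\right)^{-}\right)\circ y \circ K_{i_2,j_2} = y\circ K_{i_2,j_2}.
\end{equation*}
Since the $\circ$-action of $(B,\circ)$ on the cosets $B/K_{i_2,j_2}$ is by left multiplication and is faithful once we range over all $i_2,j_2$ (the cores intersect trivially), this coset identity holds for every $y$ and every block $K_{i_2,j_2}$ if and only if the prefactor $\left(d\,\lambda_x(a_{i_1})\right)^{-}$ is the identity of $(B,\circ)$, i.e. if and only if $d\,\lambda_x(a_{i_1})=0$ in $(B,+)$ for every $x\in B$ and every $i_1\in I$.

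The second step is to translate the condition $d\,\lambda_x(a_{i})=0$ for all $x$ into a condition on the additive orders of the $a_i$ alone. Here the key observation is that each $\lambda_x$ is an automorphism of the additive group $(B,+)$; hence $\lambda_x$ preserves additive order, and $o\!\left(\lambda_x(a_i)\right)_+ = o(a_i)_+$ for every $x$. Consequently $d\,\lambda_x(a_i)=0$ holds for all $x\in B$ precisely when $d$ is a multiple of $o(a_i)_+$. Ranging over the finitely many indices $i\in I$, the simultaneous vanishing condition holds exactly when $d$ is a common multiple of all the additive orders $o(a_i)_+$, and the minimal such non-negative integer is by definition their least common multiple. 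This yields $d=\operatorname{lcm}\{\,o(a_i)_+ \mid i\in I\,\}$, which is the claim.

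The main obstacle is the faithfulness step: one must justify carefully that the coset equality $g\circ y\circ K_{i_2,j_2}=y\circ K_{i_2,j_2}$ holding for all $y$ and all blocks forces $g=\operatorname{id}_B$. Taking $y=\operatorname{id}_B$ shows $g\in K_{i_2,j_2}$ for every $i_2,j_2$; using that left multiplication by $g$ fixes every coset then gives $g\circ K_{i_2,j_2}\circ g^{-}\subseteq K_{i_2,j_2}$, so $g$ lies in the core of each $K_{i_2,j_2}$, and the defining hypothesis $\bigcap_{i,j}\operatorname{core}(K_{i,j})=\{\operatorname{id}_B\}$ from \cref{costruz} closes the argument. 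A minor subtlety is that the prefactor depends on $x$ through $\lambda_x(a_{i_1})$, so one should either note that $g$ must be trivial for \emph{every} choice of $x$ and $i_1$ simultaneously, or simply observe that the order-preservation of $\lambda_x$ makes the dependence on $x$ vacuous at the level of orders. Everything else is a direct reading of the defining condition against the explicit formula already established in \cref{ord}.
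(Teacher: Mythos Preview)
Your proof is correct and follows essentially the same approach as the paper: apply \cref{ord}, use the trivial intersection of the cores from \cref{costruz} to deduce that the prefactor must vanish in $(B,+)$, and then use that each $\lambda_x$ preserves additive order to reduce to the $\operatorname{lcm}$ of the $o(a_i)_+$. One small expository point: in your faithfulness step, the clause ``$g\circ K_{i_2,j_2}\circ g^{-}\subseteq K_{i_2,j_2}$'' only says $g$ normalizes $K_{i_2,j_2}$; the conclusion $g\in\operatorname{core}(K_{i_2,j_2})$ follows more directly from $g\circ y\circ K_{i_2,j_2}=y\circ K_{i_2,j_2}$ for all $y$, i.e.\ $g\in y\circ K_{i_2,j_2}\circ y^{-}$ for every $y$.
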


\begin{proof}
    By \cref{ord}, we have that $(X_{B,I,\mathcal{A},\mathcal{K}},r)$ is of class $m$, for a natural number $m$, if and only if $(((m)\lambda_x(a_{i_1}))^-)\circ y \circ K_{i_2,j_2}=y \circ K_{i_2,j_2} $ for all $x,y\in B$, $i_1,i_2\in I$, $j_1\in J_{i_1}$ and $j_2\in J_{i_2}$, and this is equivalent to have $m\lambda_x(a_{i_1})\in  \bigcap_{i,j} core(K_{i,j})$. Since $o(a_i)_+=o(\lambda_x(a_i))_+$ for all $x\in B$ and, by \cref{costruz}, we have $|\bigcap_{i,j} core(K_{i,j})|=1$, it follows that $m$ must be equal to the least common multiple of the orders of the $\{a_i\}_{a_i\in I}$ in $(B,+)$. 
\end{proof}

\noindent Given a finite group $(G,\circ)$, recall that the \emph{exponent} of $G$, which we indicate by $exp(G,\circ)$, is the natural number given by the least common multiple of their elements.

\begin{cor}\label{exp}
    The Dehornoy's class $d$ of $(X_{B,I,\mathcal{A},\mathcal{K}},r)$ is equal to $exp(B,+)$. In particular, $d$ divides $|B|$.
\end{cor}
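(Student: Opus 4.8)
The plan is to combine \cref{lcm} with elementary facts about exponents of finite abelian groups. By \cref{lcm}, the Dehornoy's class $d$ equals the least common multiple $L:=\mathrm{lcm}\{o(a_i)_+\mid i\in I\}$ of the additive orders of the chosen orbit representatives, so it suffices to identify $L$ with $\exp(B,+)$.

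First I would recall that, in the notation of \cref{costruz}, the family $\{a_i\}_{i\in I}$ consists of one representative from each orbit $B_i$ (for $i\in I$) and that $Y=\bigcup_{i\in I}B_i$ is a cycle base, hence additively generates $(B,+)$. Since the $\lambda$-action is by additive automorphisms, $o(\lambda_x(a_i))_+=o(a_i)_+$ for every $x\in B$; thus every element of $B_i$ has additive order $o(a_i)_+$, and therefore $L=\mathrm{lcm}\{o(y)_+\mid y\in Y\}$.

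The core step is then the purely group-theoretic fact that, for a generating set $Y$ of a finite abelian group $(B,+)$, one has $\mathrm{lcm}\{o(y)_+\mid y\in Y\}=\exp(B,+)$. This is immediate in both directions: multiplication by $L$ annihilates every $y\in Y$ (because $o(y)_+$ divides $L$), hence annihilates the whole group $\langle Y\rangle_+=B$, so $\exp(B,+)\mid L$; conversely each $o(y)_+$ divides $\exp(B,+)$, whence $L\mid\exp(B,+)$. Combining this with the previous paragraph and \cref{lcm} yields $d=\exp(B,+)$.

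Finally, the \emph{in particular} clause is automatic: in any finite group every element order divides the group order, so $\exp(B,+)$ divides $|(B,+)|=|B|$, giving $d\mid|B|$. I do not expect any genuine obstacle here; the only point requiring a little care is the orbit-constancy of the additive order, which is precisely what legitimates replacing the representatives $\{a_i\}_{i\in I}$ by the full cycle base $Y$ and thereby passing from a least common multiple over $I$ to the exponent of the additive group.
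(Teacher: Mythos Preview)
Your argument is correct and follows essentially the same route as the paper: use \cref{lcm} to identify $d$ with the least common multiple of the $o(a_i)_+$, then use that the cycle base $Y=\bigcup_{i\in I}B_i$ additively generates $B$ to identify this lcm with $\exp(B,+)$. Your explicit remark on orbit-constancy of the additive order is a welcome clarification, but the paper already has this fact available from the proof of \cref{lcm}, so the two proofs coincide in substance.
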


\begin{proof}
    By \cref{lcm}, we have that $d$ divides $exp(B,+)$. Since $\bigcup_{i\in I} B_i $ additively generates $B$, we have $o(z)_+$ divides $d$, for every $z\in B$, hence $exp(B,+)$ divides $d$ and the equality $d=exp(B,+)$ follows. The second part follows since exponent divides order in a finite group.
\end{proof}

\begin{cor}\label{exp2}
    The Dehornoy's class $d$ of $(X_{B,I,\mathcal{A},\mathcal{K}},r)$ is equal to the maximum of the set $\{o(x)_+|x\in B\}$.
\end{cor}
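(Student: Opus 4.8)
The plan is to deduce this immediately from \cref{exp}. By that corollary, the Dehornoy's class $d$ equals $\exp(B,+)$, the least common multiple of the additive orders of the elements of $B$. Hence it suffices to show that, for the group $(B,+)$, this least common multiple coincides with the maximum additive order $\max\{o(x)_+\mid x\in B\}$.

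The key point is that $(B,+)$ is an \emph{abelian} group, by the very definition of a left brace. I would then invoke the classical fact that in a finite abelian group the exponent is attained by a single element: there exists some element whose order equals the least common multiple of all element orders. Concretely, one argues that if $a,b$ lie in a finite abelian group and have orders $m$ and $n$, then the group contains an element of order $\operatorname{lcm}(m,n)$; iterating this over a generating set (or over all elements) yields an element whose order is the least common multiple of all the orders, which is exactly $\exp(B,+)$. Equivalently, one may appeal to the structure theorem $(B,+)\cong \mathbb{Z}/d_1\mathbb{Z}\times\cdots\times\mathbb{Z}/d_k\mathbb{Z}$ with $d_1\mid\cdots\mid d_k$, for which $\exp(B,+)=d_k$ is realized by the obvious generator of the last factor.

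Putting these together, $\max\{o(x)_+\mid x\in B\}=\exp(B,+)=d$, which is the desired equality. There is essentially no obstacle here: the whole content is the passage from the least-common-multiple description of \cref{exp} to the maximum, and this passage is guaranteed precisely because the additive group of a left brace is commutative. (For a non-abelian group the exponent need not be realized by a single element, so the commutativity of $(B,+)$ is exactly the hypothesis that makes the statement correct.)
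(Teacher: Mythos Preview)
Your proposal is correct and follows essentially the same approach as the paper: invoke \cref{exp} to identify $d$ with $\exp(B,+)$, and then use the classical fact that in a finite abelian group the exponent is realized by some element. The paper's proof is a one-line version of exactly this argument.
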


\begin{proof}
    Since in every finite abelian group there exist an element such that its order is equal to the exponent, the statement follows by \cref{exp}.
\end{proof}

\noindent The previous results state that if we have suitable informations on the additive group of a left brace $(B, +,\circ)$, then we can estimate the Dehornoy’s class of the involutive solutions provided by $B$.




\begin{defin}
    If $n\in \mathbb{N}$, we indicate by $g(n)$ the maximum order of a permutation in $Sym(n)$.
\end{defin}

\noindent By a standard calculation, we have that $g(n)\leq a_n$ for all $n\in \mathbb{N}$. In \cite[Proposition 3.11] {feingesicht2023dehornoy} the author showed that the bound part of \cref{conge2} follows if $(B,\circ)$ is abelian and $(X_{B,I,\mathcal{A},\mathcal{K}},r)$ is square-free. In particular, it was shown that, in this case, the Dehornoy's class is bounded by $g(n)$. If $(B,\circ)$ is cyclic, the square-freeness can be dropped.

\begin{prop}
Suppose that $(X_{B,I,\mathcal{A},\mathcal{K}},r)$ has cyclic permutation group (in other words, $(B,\circ)$ is a cyclic group) and let $n$ be such that $n=|X_{B,I,\mathcal{A},\mathcal{K}} | $. Then the Dehornoy's class of  
 $(X_{B,I,\mathcal{A},\mathcal{K}},r)$ is bounded by $g(n)$. 
\end{prop}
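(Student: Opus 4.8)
The plan is to combine two bounds: an upper bound for the Dehornoy's class coming from the additive group of $B$, and an upper bound for $|B|$ coming from the cyclic action on $X$. The observation that makes the proof short is that on the brace side one needs nothing more than divisibility $d\mid |B|$, so the cyclicity of $(B,\circ)$ can be fed directly into the definition of $g(n)$.

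First I would invoke \cref{exp}: since $d=\exp(B,+)$ and the exponent of a finite abelian group divides its order, we have that $d$ divides $|B|$, and in particular $d\leq |B|$. Next, recall that $\mathcal{G}(X,r)$ is by definition a group of permutations of $X$, and by \cref{costruz} it is isomorphic, as a group, to $(B,\circ)$. Since $(B,\circ)$ is assumed cyclic, $\mathcal{G}(X,r)$ is realised as a cyclic subgroup of $\Sym(X)=\Sym(n)$ of order $|B|$. Being cyclic, it is generated by a single permutation, whose order as an element of $\Sym(n)$ is exactly $|B|$. By the very definition of $g(n)$ as the maximal order of a permutation in $\Sym(n)$, this yields $|B|\leq g(n)$.

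Chaining the two inequalities gives $d\leq |B|\leq g(n)$, which is the assertion. I do not expect a genuine obstacle here; the only point requiring a moment of care is that the action of $\mathcal{G}(X,r)$ on $X$ is faithful, but this is immediate because $\mathcal{G}(X,r)$ is literally a subgroup of $\Sym(X)$, so the generator of the cyclic group indeed has order precisely $|B|$ when viewed in $\Sym(n)$. I would also remark in passing that, together with the inequality $g(n)\leq a_n$ noted just before the statement, this settles the bound part of \cref{conge2} for solutions with cyclic permutation group, regardless of square-freeness.
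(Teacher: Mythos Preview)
Your proof is correct and follows essentially the same argument as the paper: both combine the divisibility $d\mid |B|$ from \cref{exp} with the observation that a cyclic permutation group on $n$ points is generated by a single element of $\Sym(n)$, forcing $|B|\leq g(n)$. Your version is simply more explicit about faithfulness and adds the remark on \cref{conge2}, but the mathematical content is the same.
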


\begin{proof}
    We have that $(B,\circ)$ is generated by a permutation of $Sym(n) $, hence the size of $B$ is bounded by $g(n)$. Moreover, by \cref{exp} $d$ divides $|B|$, therefore the thesis follows.
\end{proof}

\noindent The following easy example shows that the bound obtained in the previous proposition is the best possible, if $(X_{B,I,\mathcal{A},\mathcal{K}},r)$ has cyclic permutation group.

\begin{ex}
    Let $X:=\{1,...,n\}$ and $\gamma$ a permutation of $Sym(X)$ having order $s$. Then, the map $\sigma_x$ from $X$ to itself given by $\sigma_x(y):=\gamma(y)$ for all $x,y\in X$ give rise to an involutive solution $(X,r)$ having cyclic permutation group. Moreover, since $\Omega_n(x_1,...,x_n):=\gamma^{n-1}(x_n)$ for all $x_1,...,x_n\in X$, we have that $(X,r)$ is of class $s$.
\end{ex}

\noindent It is well-known that every finite abelian group is isomorphic to a direct product of cyclic groups having prime-power size. In the following result we show that the bound part of \cref{conge2} follows for a large family of involutive solutions with abelian permutation group (not necessarily square-free).

\begin{prop}
Suppose that $(B,\circ)$ is isomorphic to $\mathbb{Z}/p_1^{\alpha_1}\mathbb{Z}\times ... \times  \mathbb{Z}/p_r^{\alpha_r}\mathbb{Z}$, where $p_1,...,p_r$ are prime number not necessarily distinct. Moreover, suppose that $\alpha_i \neq \alpha_j$ whenever $p_i= p_j$. Then the class of   $(X_{B,I,\mathcal{A},\mathcal{K}},r)$ is bounded by $a_n$.
\end{prop}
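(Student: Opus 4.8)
The plan is to bound the Dehornoy's class through the \emph{order} of $B$ rather than through a delicate analysis of the additive group. By \cref{exp} the class $d$ of $(X_{B,I,\mathcal{A},\mathcal{K}},r)$ divides $|B|=\prod_{i=1}^r p_i^{\alpha_i}$, so it suffices to show that $\prod_{i=1}^r p_i^{\alpha_i}\le a_n$, where $n=|X_{B,I,\mathcal{A},\mathcal{K}}|$. The link between $B$ and $n$ comes from the fact that, by \cref{costruz}, the multiplicative group $(B,\circ)$ is isomorphic to $\mathcal{G}(X,r)$, which is by construction a subgroup of $\Sym(n)$; hence $(B,\circ)$ admits a faithful permutation representation of degree $n$.

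First I would invoke the classical computation of the minimal faithful permutation degree of a finite abelian group: for $(B,\circ)\cong \mathbb{Z}/p_1^{\alpha_1}\mathbb{Z}\times\cdots\times\mathbb{Z}/p_r^{\alpha_r}\mathbb{Z}$ written in primary decomposition, the minimal degree equals $\sum_{i=1}^r p_i^{\alpha_i}$ (see \cite{johnson1971minimal}). Since $(B,\circ)$ embeds in $\Sym(n)$, this yields the key inequality $n\ge \sum_{i=1}^r p_i^{\alpha_i}$. This is the step on which everything rests, and I expect it to be the main (indeed the only non-formal) ingredient; the hypothesis on the $\alpha_i$ plays no role here and is needed only afterwards.

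It remains to feed the numbers $p_i^{\alpha_i}$ into the definition of $a_n$. Here is where the assumption that $\alpha_i\neq\alpha_j$ whenever $p_i=p_j$ enters: together with unique factorization (and $\alpha_i\ge 1$) it guarantees that the integers $p_1^{\alpha_1},\dots,p_r^{\alpha_r}$ are pairwise distinct. Thus they form an admissible family of distinct parts whose sum $m:=\sum_{i=1}^r p_i^{\alpha_i}$ satisfies $m\le n$, so by the very definition of $a_m$ we get $\prod_{i=1}^r p_i^{\alpha_i}\le a_m$. Finally I would use the monotonicity $a_m\le a_n$ for $m\le n$, which is immediate: replacing the largest part in an optimal distinct-part partition of $m$ by that part increased by $n-m$ produces distinct parts summing to $n$ with at least as large a product. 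Combining these, $d\le |B|=\prod_{i=1}^r p_i^{\alpha_i}\le a_m\le a_n$, which is the desired bound.
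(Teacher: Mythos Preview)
Your proof is correct and follows essentially the same route as the paper's: both bound $d$ by $|B|=\prod_i p_i^{\alpha_i}$ via \cref{exp}, invoke the minimal faithful permutation degree of a finite abelian group (the paper cites Ore \cite{ore1939contributions} where you cite Johnson) to get $n\ge m:=\sum_i p_i^{\alpha_i}$, and then use the distinctness hypothesis together with monotonicity of $a_{(-)}$ to conclude $\prod_i p_i^{\alpha_i}\le a_m\le a_n$. Your write-up is somewhat more explicit about why the $p_i^{\alpha_i}$ are pairwise distinct and why $a_m\le a_n$, but the underlying argument is identical.
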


\begin{proof}
    Let $m:=p_1^{\alpha_1}+...+p_r^{\alpha_r} $, then we have that 
    $$d=exp(B,+)\leq \prod_{i=1}^r p_i^{\alpha_i} \leq a_m $$
    where the last inequality follows since by hypothesis $\alpha_i \neq \alpha_j$ whenever $p_i= p_j$. By \cite[Theorem 4]{ore1939contributions}, we have that $n=|X_{B,I,\mathcal{A},\mathcal{K}} |\geq m $ and hence $d\leq a_m\leq a_n$. 
\end{proof}

\noindent By results of \cite[Sections 3-4]{feingesicht2023dehornoy}, we know that every involutive solution having size at most $8$ has class bounded by $a_n$ (where $n$ is the size of the solution). If $n\leq 8$, the unique value for which $g(n)<a_n$ is $n=6$. By an inspection of involutive solutions having size $6$, we know that all the ones with abelian permutation group have Dehornoy's class bounded by $g(n)$. Thus, it is natural to ask if this is true for every involutive solution with abelian permutation group.

\begin{conj}
    The Dehornoy's class of an involutive solution $(X,r)$ with abelian permutation group is bounded by $g(n)$.
\end{conj}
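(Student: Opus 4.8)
The plan is to reduce the statement to a purely arithmetic inequality about the additive group of the permutation left brace and then compare it with Landau's function $g$. Write $B:=\mathcal G(X,r)$ and $n:=|X|$. By \cref{exp} the Dehornoy's class is $d=exp(B,+)$, so the conjecture asks exactly that $exp(B,+)\le g(n)$. Since $(B,\circ)$ is abelian, every Sylow subgroup $B_p$ of $(B,+)$ is an ideal (as recalled in Section 1), hence $B$ is the direct product of its primary ideals $B_p$ and $exp(B,+)=\prod_p exp(B_p,+)$. Setting $exp(B_p,+)=p^{c_p}$, the integers $p^{c_p}$ are pairwise coprime, so $d=\prod_p p^{c_p}$. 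A permutation of $n$ points consisting of one cycle of each length $p^{c_p}$ (and fixed points elsewhere) has order $\prod_p p^{c_p}=d$; hence $g(n)\ge d$ as soon as $\sum_p p^{c_p}\le n$. Thus the whole conjecture is equivalent to the single lower bound $\sum_p exp(B_p,+)\le n$.

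The first thing I would try is Ore's theorem \cite{ore1939contributions} — the same tool already used in Section 5. The faithful action of the abelian group $(B,\circ)$ on $X$ gives $n\ge \mu(B,\circ)=\sum_p\sum_j p^{a_{p,j}}$, where $(B_p,\circ)\cong\prod_j\mathbb Z/p^{a_{p,j}}\mathbb Z$ and $\mu$ denotes the minimal degree of a faithful permutation representation. Whenever the additive exponent does not exceed the multiplicative minimal degree at each prime, i.e. $p^{c_p}\le\sum_j p^{a_{p,j}}$, this already yields $\sum_p p^{c_p}\le n$ and finishes the proof. In particular the result is immediate whenever $exp(B_p,+)=exp(B_p,\circ)$ for all $p$, which covers braces with cyclic primary components such as the $C(p,k,t)$ and, more generally, every case in which the additive and multiplicative invariants agree.

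The hard part will be that these invariants can genuinely disagree. For example, the brace on $(\mathbb Z/8\mathbb Z,+)$ defined by $a\circ b=a+b+6ab$ has $(B,\circ)\cong\mathbb Z/4\mathbb Z\times\mathbb Z/2\mathbb Z$, so $exp(B,+)=8$ while $\mu(B,\circ)=6$; here Ore's generic bound $n\ge 6$ is simply too weak. For such braces I would replace Ore's estimate by a brace-specific lower bound on the size of any solution, and the tool I would use is the socle. Indeed $Soc(B_p)$ is contained in every $\lambda$-stabiliser, hence in $St(a_i)$ for each admissible subgroup $K_{i,j}\subseteq St(a_i)$ of \cref{costruz}; consequently a non-trivial socle element can be deleted from $\bigcap_{i,j}core(K_{i,j})$ only by using an (almost) regular orbit, which forces $n$ upward. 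In the example above this mechanism forces $n\ge|B|=8=exp(B,+)$, exactly repairing the deficit left by Ore's bound.

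To turn this into a proof I would argue prime by prime and induct along the socle series. For a primary brace $B_p$, since a generating set of a finite abelian $p$-group must contain an element of maximal order, the cycle base supplies an element $a$ with $o(a)_+=p^{c_p}=exp(B_p,+)$ (compare \cref{lcm}); I would then track how the constraints $K_{i,j}\subseteq St(a_i)$ and $\bigcap_{i,j}core(K_{i,j})=\{\id\}$ behave under passage to the quotient $B_p/Soc(B_p)$, aiming at the clean estimate $n\ge\sum_p p^{c_p}$. The main obstacle is exactly this estimate: unlike the cyclic situation treated earlier in the paper, the additive order of the chosen cycle-base element is decoupled from its multiplicative order, so the bound cannot be read off from the permutation group alone and must instead be extracted from the $\lambda$-action and the socle filtration uniformly over all abelian braces. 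Quantifying how large the additive order of a cycle-base element forces the minimal faithful point set to be is where the real difficulty lies.
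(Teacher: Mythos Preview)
The paper does not prove this statement: it is posed there as an open conjecture, supported only by computer evidence for $n\le 8$. There is therefore no ``paper's own proof'' to compare your attempt against.

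Your write-up is not a proof either, and you are candid about this. The reduction you give --- $d=\exp(B,+)=\prod_p p^{c_p}$, hence $d\le g(n)$ would follow from $\sum_p p^{c_p}\le n$ --- is correct and is the natural reformulation. Invoking Ore's minimal-degree formula to obtain $n\ge\sum_p\sum_j p^{a_{p,j}}$ is also sound, and your observation that this settles every case in which $\exp(B_p,+)\le\mu(B_p,\circ)$ for all $p$ is exactly what lies behind the partial results the paper \emph{does} prove (the cyclic case and the case of pairwise distinct $p_i^{\alpha_i}$). Your $\mathbb Z/8\mathbb Z$ example with $a\circ b=a+b+6ab$ is a genuine instance where Ore's bound alone gives only $n\ge 6$ while $\exp(B,+)=8$, so it correctly isolates the obstruction.

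The gap is precisely where you say it is: the proposed socle-filtration argument is only a sketch. In your example the conclusion $n\ge 8$ can indeed be checked by hand (any cycle base must contain an odd element, whose stabiliser is $Soc(B)=\{0,4\}$, and killing $4$ from the intersection of cores forces a second coset space of size at least $4$), but you have not shown how to run this uniformly over all abelian $p$-braces, nor how to control the interaction between primes when the solution is decomposable. Until the inequality $\sum_p\exp(B_p,+)\le|X|$ is established in general, what you have is a plausible programme for attacking the conjecture rather than a proof, which is consistent with the paper leaving the statement open.
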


\noindent Other involutive solutions $(X_{B,I,\mathcal{A},\mathcal{K}},r)$ whose Dehornoy's class is bounded by $g(n)$, where $n=|X_{B,I,\mathcal{A},\mathcal{K}}| $, are provided by left braces $B$ with $\lambda_a(a)=a$ for all $a\in B$.

\begin{prop}
Suppose that $(X_{B,I,\mathcal{A},\mathcal{K}},r)$ is such that $\lambda_a(a)=a$ for all $a\in B$, and let $n$ be such that $n=|X_{B,I,\mathcal{A},\mathcal{K}} | $. Then the Dehornoy's class of  
 $(X_{B,I,\mathcal{A},\mathcal{K}},r)$ is bounded by $g(n)$. 
\end{prop}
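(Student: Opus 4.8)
The plan is to reduce the estimate to additive orders by means of \cref{exp2}, and then to exploit the hypothesis $\lambda_a(a)=a$ to show that the additive and the multiplicative order of each element of $B$ coincide. By \cref{exp2} the Dehornoy's class satisfies $d=\max\{o(x)_+\mid x\in B\}$, so it is enough to bound the largest additive order of an element of $B$ by $g(n)$.

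The key step is the identity $a^{\circ k}=ka$, where $ka$ denotes the $k$-th additive multiple of $a$, valid for every $a\in B$ and every $k\in\mathbb{N}$; I would prove it by induction on $k$. Writing the multiplication of the left brace as $a\circ x=a+\lambda_a(x)$ and using that $\lambda_a\in\Aut(B,+)$ is additive, the inductive step reads
\[
a^{\circ(k+1)}=a\circ a^{\circ k}=a\circ(ka)=a+\lambda_a(ka)=a+k\,\lambda_a(a)=a+ka=(k+1)a,
\]
where the hypothesis $\lambda_a(a)=a$ enters in the penultimate equality. Since the additive and the multiplicative identity of a left brace coincide, this identity immediately gives $o(a)_+=o(a)_\circ$ for every $a\in B$.

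It then remains to bound the multiplicative orders. As $(B,\circ)$ is isomorphic to the permutation group $\mathcal{G}(X,r)$, which by definition is a subgroup of $\Sym(X)$ with $|X|=n$, every element of $(B,\circ)$ has multiplicative order at most $g(n)$. Combining the three observations gives
\[
d=\max\{o(x)_+\mid x\in B\}=\max\{o(x)_\circ\mid x\in B\}\leq g(n),
\]
which is the desired bound.

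I expect the only genuine point to be the identity $a^{\circ k}=ka$; everything else is a direct appeal to \cref{exp2} and to the definition of the permutation group. That identity is itself a routine induction, whose only subtlety is combining the additivity of $\lambda_a$ with the hypothesis $\lambda_a(a)=a$.
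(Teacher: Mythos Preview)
Your proof is correct and follows essentially the same approach as the paper's own proof: both establish by induction the identity $a^{\circ k}=ka$ from the hypothesis $\lambda_a(a)=a$, deduce $o(a)_+=o(a)_\circ$ for all $a\in B$, and then bound the maximum additive order (which equals the Dehornoy's class by \cref{exp2}) by $g(n)$ using that $(B,\circ)$ embeds in $\Sym(n)$. Your induction step is spelled out a bit more explicitly than in the paper, but the argument is the same.
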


\begin{proof}
  By induction on $m$, one can show that $mx=x^{\circ m}$ for all $m\in \mathbb{N}$ and $x\in B$, hence $o(x)_+=o(x)_\circ$ and therefore by \cref{exp} $d=exp(B,+)=exp(B,\circ )$. Since $(B,+)$ is abelian, there exist $z\in B$ such that $d=o(z)_+=o(z)_\circ$, therefore $d\leq g(n)$.
\end{proof}

In \cite{dehornoy2015set}, it was showed that $d\leq (n^2)!$. In \cite[Proposition 3.10]{feingesicht2023dehornoy} this bound was improved showing that that $d\leq n!$. Using a theorem due by Dixon, in the following result we furtherly improve this estimation.

\begin{prop}
    Let $n:=|X_{B,I,\mathcal{A},\mathcal{K}}|$. Then the Dehornoy's class of $(X_{B,I,\mathcal{A},\mathcal{K}},r)$ is bounded by ${24}^{\frac{n-1}{3}}$.
\end{prop}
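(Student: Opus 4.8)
The plan is to bound the Dehornoy's class via \cref{exp}, which gives $d = \exp(B,+)$ where $(B,\circ) = \mathcal{G}(X,r)$ is the permutation group. Since $d = \exp(B,+)$ divides $|B|$ by \cref{exp}, and since $\exp(B,+) \leq |B|$ trivially, the key point is to bound the order $|B|$ of the permutation group in terms of $n = |X|$. The permutation group $\mathcal{G}(X,r)$ is, by construction, a subgroup of $\Sym_X \cong \Sym(n)$. Thus $d = \exp(B,+)$ divides $|B|$, and $|B|$ divides the order of some subgroup of $\Sym(n)$.

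First I would invoke Dixon's theorem, which is the quoted ingredient. The relevant statement (G.~Dixon) is that the order of any solvable subgroup of $\Sym(n)$ is at most $24^{(n-1)/3}$. Since the permutation group of an involutive solution is always solvable—indeed, the permutation left brace $(B,+,\circ)$ has a solvable multiplicative group because every finite left brace has solvable multiplicative group (this is a standard fact, following from the fact that $B$ admits a finite chain of ideals with abelian factors via the socle series)—we may apply Dixon's bound directly to $(B,\circ) \leq \Sym(n)$. This yields $|B| \leq 24^{(n-1)/3}$, and therefore $d = \exp(B,+)$, which divides $|B|$, satisfies $d \leq |B| \leq 24^{(n-1)/3}$.

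So the skeleton of the proof is: (i) apply \cref{exp} to rewrite $d = \exp(B,+)$ and observe $d \mid |B|$; (ii) note $(B,\circ)$ embeds in $\Sym(n)$ as the permutation group acting on $X$; (iii) verify $(B,\circ)$ is solvable, since the multiplicative group of a finite left brace is always solvable; (iv) apply Dixon's bound on the order of a solvable subgroup of $\Sym(n)$ to conclude $|B| \leq 24^{(n-1)/3}$, whence $d \leq 24^{(n-1)/3}$.

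The main obstacle, or rather the main point requiring care, is the solvability hypothesis in Dixon's theorem: Dixon's $24^{(n-1)/3}$ bound applies only to \emph{solvable} subgroups of $\Sym(n)$, so one must justify that the permutation group of an involutive solution is solvable. This is where the left brace structure is essential—the permutation group is the multiplicative group of the permutation left brace, and finite left braces have solvable multiplicative groups. I would cite this known fact (or derive it from the socle series of the left brace, whose successive quotients are trivial left braces with abelian multiplicative groups). Once solvability is in hand, the rest is a direct chaining of inequalities, so I expect no further difficulty beyond correctly attributing Dixon's theorem and the solvability of brace multiplicative groups.
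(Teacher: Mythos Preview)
Your proposal is correct and follows essentially the same route as the paper: bound $d$ by $|B|$ via \cref{exp}, invoke solvability of $(B,\circ)$, and apply Dixon's theorem on solvable subgroups of $\Sym(n)$. The only cosmetic difference is that the paper cites the solvability of $\mathcal{G}(X,r)$ directly from \cite[Theorem 2.15]{etingof1998set} rather than deriving it from the socle series of the left brace (and note the attribution should be to John Dixon, not G.~Dixon).
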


\begin{proof}
    By \cref{exp}, the class is bounded by $|B|$. Since by \cite[Theorem 2.15]{etingof1998set} $(B,\circ)$ must be a solvable group, by \cite[Theorem 3]{dixon1967fitting} $|B|$ is bounded by ${24}^{\frac{n-1}{3}}$ and hence the thesis follows.
\end{proof}




\noindent We conclude the subsection giving the computation of the Dehornoy's class for a large class of involutive solutions with dihedral permutation group.

\begin{prop}\label{dihedral}
Suppose that $(X_{B,I,\mathcal{A},\mathcal{K}},r)$ is such that $B$ is isomorphic to a dihedral group of size $2^n\cdotp m $ for $n>3$ and $m$ an odd number. Then, $d$ is equal to $2^{n}\cdotp m$ if $B$ is a cyclic left brace and it is equal to $2^{n-1}\cdotp m$ otherwise. 
\end{prop}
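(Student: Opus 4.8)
The plan is to reduce the whole statement to a computation of the additive exponent of $B$. By \cref{exp2} we have $d=\max\{o(x)_+\mid x\in B\}=\exp(B,+)$, so it suffices to determine the exponent of the finite abelian group $(B,+)$, whose order is $2^n\cdot m$. Since $(B,+)$ is the direct sum of its Sylow subgroups and these have pairwise coprime orders, $\exp(B,+)$ is the product of the exponents of the additive Sylow subgroups $B_p$; recall that each $B_p$ is a left ideal of $B$, so $(B_p,\circ)$ is a subgroup of $(B,\circ)\cong D$, the dihedral group of order $2^n m$.

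First I would dispose of the odd part. For an odd prime $p\mid m$ the subgroup $(B_p,\circ)$ has odd order, hence lies in the (cyclic) rotation subgroup of $D$ and is itself cyclic. Thus $B_p$ is a left brace with cyclic multiplicative group, so by \cite[Proposition 3]{rump2007braces} (the $C(p,n,t)$ decomposition recalled in the Example above) its additive group is cyclic, and $\exp(B_p,+)=p^{v_p(m)}$. Multiplying over the odd primes shows that the odd part of $(B,+)$ is cyclic of order $m$. Consequently $\exp(B,+)=m\cdot\exp(B_2,+)$, and $(B,+)$ is cyclic exactly when the additive Sylow $2$-subgroup $(B_2,+)$ is cyclic. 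This is the origin of the dichotomy in the statement: \emph{$B$ is a cyclic left brace if and only if $(B_2,+)$ is cyclic}.

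It remains to analyse $B_2$, a left brace of order $2^n$ whose multiplicative group is the Sylow $2$-subgroup of $D$, namely the dihedral group of order $2^n$. The crux, and the step I expect to be the main obstacle, is to show that the only abelian groups of order $2^n$ that can occur as $(B_2,+)$ are $\mathbb{Z}/2^n\mathbb{Z}$ and $\mathbb{Z}/2^{n-1}\mathbb{Z}\times\mathbb{Z}/2\mathbb{Z}$, equivalently that $\exp(B_2,+)\ge 2^{n-1}$. I would work in the holomorph: the brace $B_2$ is the datum of a regular copy of the dihedral group inside $\operatorname{Hol}(B_2,+)=(B_2,+)\rtimes\operatorname{Aut}(B_2,+)$ whose linear part is the map $a\mapsto\lambda_a$. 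Writing $s$ for a rotation of multiplicative order $2^{n-1}$ and using the identity $s^{\circ k}=\sum_{i=0}^{k-1}\lambda_s^{\,i}(s)$ (a direct induction, in the spirit of \cref{ord}), regularity forces the $2^{n-1}$ elements $s^{\circ k}$ to be distinct; as each is a sum of $\operatorname{Aut}$-conjugates of $s$, they lie in the additive subgroup generated by the $\lambda_s$-orbit of $s$, which therefore has order at least $2^{n-1}$. To upgrade this bound on the order to the required bound on the exponent, I would set $\sigma=o(\lambda_s)$ and note that $s^{\circ\sigma}\in\operatorname{Soc}(B_2)$ has additive order $2^{n-1}/\sigma$ dividing $o(s)_+$, while the relation $\lambda_t\lambda_s\lambda_t^{-1}=\lambda_s^{-1}$ coming from a reflection $t$, together with the standard bound on the $2$-power order of an automorphism of an abelian $2$-group of exponent $2^e$ (via the congruence filtration of $\operatorname{Aut}(B_2,+)$), constrains $\sigma$; balancing the two estimates forces $e\ge n-1$. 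This bookkeeping is delicate, and the hypothesis $n>3$ enters precisely here to discard the small exceptional additive types that arise for dihedral $2$-braces of order $\le 16$; for the generic range the cleanest route is to invoke the classification of left braces (equivalently, regular subgroups of the holomorph, or Hopf--Galois structures) with dihedral multiplicative $2$-group, from which the two admissible additive types can be read off directly.

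Finally I would assemble the answer. If $(B_2,+)$ is cyclic, then $\exp(B_2,+)=2^n$, the group $(B,+)$ is cyclic, $B$ is a cyclic left brace, and $d=\exp(B,+)=2^n\cdot m$. If $(B_2,+)$ is not cyclic, then, being a non-cyclic abelian group of order $2^n$ of exponent at least $2^{n-1}$, it must be $\mathbb{Z}/2^{n-1}\mathbb{Z}\times\mathbb{Z}/2\mathbb{Z}$, so $\exp(B_2,+)=2^{n-1}$, $B$ is not a cyclic left brace, and $d=\exp(B,+)=2^{n-1}\cdot m$. These are exactly the two values claimed.
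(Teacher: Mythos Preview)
Your approach is essentially the paper's: reduce to $d=\exp(B,+)$ via \cref{exp2}, factor $(B,+)$ into Sylow left ideals, observe that the odd Sylows have cyclic multiplicative group inside the dihedral group and hence cyclic additive group, so the whole question comes down to $\exp(B_2,+)$.

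The one substantive difference is the $2$-Sylow step. The paper does not attempt any holomorph or congruence-filtration bookkeeping; it simply cites \cite[Theorem~1]{rump2020classifi}, which states that for $n>3$ a left brace with dihedral multiplicative $2$-group of order $2^n$ has additive group either $\mathbb{Z}/2^{n}\mathbb{Z}$ or $\mathbb{Z}/2\mathbb{Z}\times\mathbb{Z}/2^{n-1}\mathbb{Z}$, and the dichotomy follows immediately. Your sketched direct argument (bounding $e$ by playing $o(s^{\circ\sigma})_+$ off against the automorphism order $\sigma$) is not actually closed as written---the phrase ``balancing the two estimates forces $e\ge n-1$'' hides real work and is not obviously true without further input---so your own fallback to ``invoke the classification'' is exactly what the paper does and is the right move. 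For the odd part the paper cites \cite[Proposition~5.4]{bachiller2016solutions} rather than \cite[Proposition~3]{rump2007braces}, but the content is the same.
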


\begin{proof}
    If $(B,+)$ is a cyclic group, then the thesis follows by \cref{exp2}. If $(B,+)$ is not cyclic, since $(B,+)$ is isomorphic to the direct product of their $p$-Sylow subgroups $(B_p,+)$ and $(B_p,\circ)$ is a cyclic subgroup of $(B,\circ)$ for all $p\neq 2$, by \cite[Proposition 5.4]{bachiller2016solutions} it follows that there exist an element $b_m$ such that $o(b_m)_+=m$. By \cite[Theorem 1]{rump2020classifi}, $(B_2,+)$ is isomorphic to $\mathbb{Z}/2\mathbb{Z}\times \mathbb{Z}/2^{n-1}\mathbb{Z} $, hence if $b_2$ is such that $o(b_2)_+=2^{n-1}$, we obtain that $o(b_2+b_m)_+=2^{n-1}\cdotp m$. Since $(B,+)$ is not cyclic, $2^{n-1}\cdotp m$ is the maximum order in $(B,+)$, hence the thesis follows by \cref{exp2}.
\end{proof}

\subsection{Dehornoy's class of indecomposable involutive solutions}

In this subsection, we use the results previously obtained to give some estimations of several families of indecomposable involutive solutions. In particular, we will show several istances for which \cref{conge1} follows.

\begin{cor}\label{cyclicuni}
If $(X_{B,I,\mathcal{A},\mathcal{K}},r)$ is uniconnected, then $d\leq |X|$.
\end{cor}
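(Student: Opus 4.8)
The plan is to reduce the claim immediately to the divisibility statement already established in \cref{exp}. The key observation is that uniconnectedness pins down the size of the permutation left brace exactly. By definition, a solution is uniconnected when $\mathcal{G}(X,r)$ acts regularly on $X$; since a regular action on a finite set is free and transitive, the orbit-stabilizer principle forces $|\mathcal{G}(X,r)|=|X|$. Because $\mathcal{G}(X,r)\cong(B,\circ)$ as groups, this yields $|B|=|X|$.

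With this in hand the argument closes in one step. First I would record the equality $|B|=|X|$ coming from regularity. Then, invoking \cref{exp}, the Dehornoy's class $d$ equals $\exp(B,+)$ and in particular divides $|B|$. Combining these, $d$ divides $|B|=|X|$, and since a positive divisor of a positive integer cannot exceed it, we conclude $d\leq|X|$.

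There is no genuine obstacle in this corollary: the substance is carried entirely by \cref{exp}, and the only extra ingredient is the elementary fact that regular actions are sharply transitive, so that the underlying set and the acting group have matching cardinality. The sole point to state carefully is the identification $|\mathcal{G}(X,r)|=|B|$, which is immediate from the isomorphism $\mathcal{G}(X,r)\cong(B,\circ)$ of multiplicative groups. One could even strengthen the conclusion to $d\mid|X|$, but the stated inequality is all that is required.
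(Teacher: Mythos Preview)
Your proof is correct and follows exactly the same route as the paper's own argument: you use regularity to get $|X|=|B|$ and then invoke \cref{exp} to conclude $d\mid|B|=|X|$. The only difference is that you spell out the orbit--stabilizer reasoning behind $|X|=|B|$, which the paper takes for granted in a single clause.
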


\begin{proof}
    Since $|X_{B,I,\mathcal{A},\mathcal{K}}|=|B|$, the thesis follows by \cref{exp}.
\end{proof}





\begin{rem}
    When $(B,\circ)$ is abelian, by \cite[Proposition $3$]{rump2007braces} $B$ is isomorphic, as left braces, to the direct product of the $p$-Sylow subgroups of the additive group. In this case the computation of $d$ can be reduced to the one having $B$ a prime-power number of elements.
\end{rem} 

\begin{prop}
If $(X_{B,I,\mathcal{A},\mathcal{K}},r)$ is indecomposable and $(B,\circ)$ is a cyclic left brace of prime power order $p^k$, then $d=|X_{B,I,\mathcal{A},\mathcal{K}}|$ if $(|X_{B,I,\mathcal{A},\mathcal{K}}|,mpl(X_{B,I,\mathcal{A},\mathcal{K}},r))\neq (4,2)$ and it is $2$ otherwise.
\end{prop}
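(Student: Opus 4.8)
The plan is to turn the statement into a question about the additive group of $B$ and then invoke the rigidity of a cyclic multiplicative group. First I would apply \cref{exp2}, which gives $d=\exp(B,+)$, so the Dehornoy's class is simply the maximal additive order in $B$. Next, since $(B,\circ)$ is cyclic it is a Dedekind group, so by \cref{solded} the solution is uniconnected; combined with \cref{cosmod} (or \cref{remark11}) this forces the stabiliser to be trivial and hence $|X_{B,I,\mathcal{A},\mathcal{K}}|=|B|=p^{k}$. Therefore $d=|X_{B,I,\mathcal{A},\mathcal{K}}|$ holds exactly when $(B,+)$ is cyclic of order $p^{k}$, and it fails precisely when $(B,+)$ is non-cyclic, in which case $\exp(B,+)<p^{k}$.

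The heart of the argument is thus to determine which prime-power left braces with cyclic multiplicative group have non-cyclic additive group. I would read this off the classification of indecomposable solutions with cyclic permutation group in \cite{jedlivcka2021cocyclic} (equivalently, from an analysis of cyclic regular subgroups of the holomorph $(B,+)\rtimes \Aut(B,+)$). Writing $g$ for a multiplicative generator and $\phi=\lambda_{g}$, one has $g^{\circ n}=\sum_{m\ge 0}\binom{n}{m+1}(\phi-\id)^{m}(g)$, and the powers $g^{\circ i}$ exhaust $B$; inspecting this identity together with $g^{\circ p^{k}}=0$ shows that $g$ additively generates $B$ — so $(B,+)$ is cyclic — for all odd $p$, and also for $p=2$ when $k\ge 3$. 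The only surviving possibility with $(B,+)$ non-cyclic is $p=2$, $k=2$, giving $(B,+)\cong \mathbb{Z}/2\mathbb{Z}\times\mathbb{Z}/2\mathbb{Z}$ and $(B,\circ)\cong\mathbb{Z}/4\mathbb{Z}$, the unique left brace of order $4$ that is not of the form $C(2,2,t)$.

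It then remains to assemble the two cases and verify the dichotomy is exactly the stated one. In the generic (cyclic-additive) case I conclude $d=\exp(\mathbb{Z}/p^{k}\mathbb{Z})=p^{k}=|X_{B,I,\mathcal{A},\mathcal{K}}|$, and I check this never collides with the excluded pair: if $|X|=4$ and $(B,+)$ is cyclic then $(B,+)=(B,\circ)=\mathbb{Z}/4\mathbb{Z}$ forces $B$ to be the trivial brace (note $C(2,2,1)$ has multiplicative group $\mathbb{Z}/2\mathbb{Z}\times\mathbb{Z}/2\mathbb{Z}$, not cyclic), whose uniconnected solution has all $\sigma_{x}$ equal and hence multipermutation level $1$. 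For the exceptional brace I compute directly that $\exp(B,+)=2$, that $Soc(B)=\ker\lambda$ has order $2$ so $B$ has multipermutation level $2$, and that $|X|=4$; thus $(|X_{B,I,\mathcal{A},\mathcal{K}}|,\mpl)=(4,2)$ with $d=2$, which is exactly the stated exception.

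The step I expect to be the main obstacle is the classification claim that, apart from the order-$4$ Klein brace, a cyclic multiplicative $p$-group forces the additive group to be cyclic. For odd $p$ this is comparatively clean, since the relevant Sylow $p$-subgroup of $\Aut(B,+)$ for a non-cyclic additive group has too small an exponent for $(B,+)\rtimes\Aut(B,+)$ to contain a regular cyclic subgroup of order $p^{k}$. The delicate part is ruling out the remaining $2$-group cases for $k\ge 3$, which I would settle either through the norm-map identity above or, more safely, by citing the cocyclic classification of \cite{jedlivcka2021cocyclic} directly.
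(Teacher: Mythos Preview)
Your argument is correct and follows essentially the same route as the paper: reduce to computing $\exp(B,+)$ via \cref{exp2}, then invoke a structural result saying that a left brace with $(B,\circ)\cong\mathbb{Z}/p^{k}$ has cyclic additive group except when $p^{k}=4$. The paper's proof is the one-line ``It follows by \cref{exp2} and \cite[Proposition 5.4]{bachiller2016solutions}'', where Bachiller's proposition is exactly that structural fact; you unpack the same skeleton in more detail, establish uniconnectedness explicitly to get $|X|=|B|$, and cite \cite{jedlivcka2021cocyclic} instead for the classification step, additionally verifying by hand that the exceptional order-$4$ brace has multipermutation level~$2$.
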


\begin{proof}
    It follows by \cref{exp2} and \cite[Proposition 5.4]{bachiller2016solutions}.
\end{proof}

\begin{prop}
Suppose that $(X_{B,I,\mathcal{A},\mathcal{K}},r)$ is indecomposable and has square-free order. Then, $d$  is equal to $|X_{B,I,\mathcal{A},\mathcal{K}}|$.
\end{prop}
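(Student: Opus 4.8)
The plan is to exploit the fact that square-freeness of the order forces the additive group of the permutation left brace to be cyclic, after which the value of the Dehornoy's class is immediate from the results of the previous subsection. Throughout I write $n=|X_{B,I,\mathcal{A},\mathcal{K}}|$ for the (square-free) order of the solution.

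First I would reduce to the uniconnected case and identify $|B|$ with $n$. Since $(X_{B,I,\mathcal{A},\mathcal{K}},r)$ is indecomposable with square-free order, \cref{unisqfree} guarantees that it is uniconnected; consequently $\mathcal{G}(X,r)\cong(B,\circ)$ acts regularly on $X$, so that $|B|=|\mathcal{G}(X,r)|=n$. In particular the additive group $(B,+)$ is an abelian group whose order $n$ is square-free.

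Next I would argue that $(B,+)$ must be cyclic. Writing $n=p_1\cdots p_k$ for distinct primes, each Sylow subgroup of the abelian group $(B,+)$ has prime order $p_i$ and is therefore cyclic, and $(B,+)$ is the direct product of these Sylow subgroups of pairwise coprime order. Hence $(B,+)\cong\mathbb{Z}/p_1\mathbb{Z}\times\cdots\times\mathbb{Z}/p_k\mathbb{Z}\cong\mathbb{Z}/n\mathbb{Z}$ is cyclic, and therefore $exp(B,+)=|B|=n$.

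Finally I would combine these observations with \cref{exp}: since $d=exp(B,+)$ and we have just shown $exp(B,+)=n$, we conclude $d=n=|X_{B,I,\mathcal{A},\mathcal{K}}|$, as claimed. I do not expect any genuine obstacle here: the only point requiring care is the passage $|X|=|B|$, which rests on \cref{unisqfree}, and the elementary structural fact that an abelian group of square-free order is cyclic; once $(B,+)$ is known to be cyclic the conclusion follows directly from \cref{exp} (equivalently, from \cref{exp2}, since the maximum additive order in a cyclic group of order $n$ is $n$).
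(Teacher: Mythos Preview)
There is a genuine gap at the very first step. You invoke \cref{unisqfree} to conclude that the solution is uniconnected, but the hypothesis of \cref{unisqfree} is that the \emph{permutation group} $\mathcal{G}(X,r)$ has square-free order, whereas here you are only given that $|X|$ is square-free. These are different conditions, and in fact indecomposable solutions of square-free cardinality need not be uniconnected (this is precisely why the paper's introduction mentions \cite{cedo2022indecomposable,dietzel2023indecomposable} as studying indecomposable solutions ``not necessarily uniconnected'' of square-free order). Consequently you cannot conclude $|B|=|X|$, and in general $|B|$ is not square-free, so the argument that $(B,+)$ is cyclic of order $n$ breaks down.

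The paper's proof avoids this difficulty entirely: it appeals to \cite[Theorem 4.1]{cedo2022indecomposable}, which says that for an indecomposable solution with square-free $|X|$ the primes dividing $|B|$ are exactly those dividing $|X|$, and every Sylow $p$-subgroup of $(B,+)$ is elementary abelian. Thus each such Sylow subgroup has exponent $p$, and the maximum order of an element of $(B,+)$ is the product of the distinct primes dividing $|X|$, namely $|X|$ itself. Then \cref{exp2} gives $d=|X|$. Note that nowhere does this require $(B,+)$ to be cyclic or $|B|=|X|$; the elementary-abelian structure is exactly what substitutes for your cyclicity step.
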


\begin{proof}
    By \cite[Theorem 4.1]{cedo2022indecomposable}, $p$ divides $|B|$ if and only if $p$ divides $|X_{B,I,\mathcal{A},\mathcal{K}} |$ and moreover every $p$-Sylow subgroup of the additive group of $B$ is elementary abelian, hence the maximum order in $(B,+)$ is equal to  $|X_{B,I,\mathcal{A},\mathcal{K}} |$. Therefore, the thesis follows by \cref{exp2}.
\end{proof}

\begin{cor}
Suppose that $(X_{B,I,\mathcal{A},\mathcal{K}},r)$ is such that $(B,\circ)$ is isomorphic to a generalized quaternion $2$-group of size $2^n$, for $n>4$. Then, $d$ is equal to $2^n $ if $B$ is cyclic and it is equal to $2^{n-1}$ otherwise. If $(X_{B,I,\mathcal{A},\mathcal{K}},r)$ is indecomposable, $d\leq |X|$.
\end{cor}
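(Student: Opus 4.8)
The plan is to mirror the proof of \cref{dihedral}, replacing the dihedral structure theorem by the corresponding classification for generalized quaternion $2$-groups. The entire computation of $d$ rests on \cref{exp2}, which identifies $d$ with the maximal additive order $\max\{o(x)_+\mid x\in B\}=\exp(B,+)$; hence the problem reduces to determining the additive group $(B,+)$ of a left brace whose multiplicative group is a generalized quaternion $2$-group of order $2^n$. The cyclic case is immediate: if $(B,+)$ is cyclic then, since $|B|=2^n$, we have $\exp(B,+)=2^n$, and \cref{exp2} gives $d=2^n$.

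For the non-cyclic case, the decisive step is to invoke the classification of left braces with generalized quaternion multiplicative group---the analogue for $Q_{2^n}$ of the structural input used in \cref{dihedral}, namely \cite[Theorem 1]{rump2020classifi}---to conclude that, for $n>4$, the only non-cyclic possibility is $(B,+)\cong\mathbb{Z}/2\mathbb{Z}\times\mathbb{Z}/2^{n-1}\mathbb{Z}$. The exponent of this group equals $2^{n-1}$, so \cref{exp2} yields $d=2^{n-1}$. I expect this structural step to be the only genuine obstacle: one must rule out additive groups such as $\mathbb{Z}/4\mathbb{Z}\times\mathbb{Z}/2^{n-2}\mathbb{Z}$, which would produce a strictly smaller exponent, and it is precisely here that the hypothesis $n>4$ and the classification theorem do the work.

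Finally, for the indecomposable assertion, \cref{solqua} shows that an indecomposable solution whose permutation group is a generalized quaternion $2$-group is automatically uniconnected, whence $|X|=|B|=2^n$. The bound $d\leq|X|$ then follows directly from \cref{cyclicuni}, or equally from \cref{exp}, by which $d$ divides $|B|=|X|$.
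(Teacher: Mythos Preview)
Your proposal is correct and follows essentially the same route as the paper: both invoke \cite[Theorem~1]{rump2020classifi} to conclude that $(B,+)$ is either cyclic or isomorphic to $\mathbb{Z}/2\mathbb{Z}\times\mathbb{Z}/2^{n-1}\mathbb{Z}$, and then apply \cref{exp2} to read off $d$. Your treatment of the indecomposable assertion via \cref{solqua} and \cref{cyclicuni} is more explicit than the paper's, which simply absorbs this into ``the thesis follows by \cref{exp2}'', but the underlying argument is the same.
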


\begin{proof}
    By \cite[Theorem 1]{rump2020classifi}, $(B,+)$ is cyclic or isomorphic to $\mathbb{Z}/2\mathbb{Z}\times \mathbb{Z}/2^{n-1}\mathbb{Z}$. Therefore the thesis follows by \cref{exp2}.
\end{proof}

\begin{prop}
Suppose that $(X_{B,I,\mathcal{A},\mathcal{K}},r)$ is indecomposable and $(B,\circ)$ is isomorphic to a generalized dihedral group of size $2m$. Then, $d\leq |X_{B,I,\mathcal{A},\mathcal{K}}|$.
\end{prop}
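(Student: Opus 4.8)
The plan is to reduce the inequality to a statement about the additive group $(B,+)$ and then exploit the rigidity that a generalized dihedral permutation group imposes. First I would use \cref{exp2} to replace $d$ by $\exp(B,+)$, the maximal additive order of an element of $B$; since $(B,+)$ is abelian of order $|B|=2m$, this converts the claim into $\exp(B,+)\le |X|$. I would then split the argument according to whether the additive group $(B,+)$ is cyclic, since the cyclic case is precisely the one in which the exponent can be as large as $|B|$.

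If $(B,+)$ is cyclic, then $B$ is a cyclic left brace, and \cref{solcyc} forces $(X,r)$ to be uniconnected; the bound $d\le|X|$ is then immediate from \cref{cyclicuni}. If $(B,+)$ is not cyclic, then $\exp(B,+)$ is a \emph{proper} divisor of $2m$, and because $2m$ is even its largest proper divisor is $m$, so $\exp(B,+)\le m$. On the other hand \cref{soldihe} guarantees $|X|\in\{m,2m\}$, hence $|X|\ge m$, and combining the two inequalities yields $d=\exp(B,+)\le m\le |X|$. The two cases together give $d\le|X|$.

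The proof is short once the right invariants are in play, so the main issue is organizational rather than technical: the work lies in recognizing that \cref{exp2} turns the Dehornoy's class into an additive exponent, that \cref{solcyc} is exactly what rules out a cyclic additive group in the non-uniconnected regime, and that \cref{soldihe} pins $|X|$ down from below. The only self-contained computation is the elementary remark that the largest proper divisor of an even number $2m$ equals $m$, which holds because any proper divisor $\delta$ of $2m$ satisfies $2m/\delta\ge 2$ and hence $\delta\le m$. I anticipate no genuine obstacle beyond assembling these cited results in the correct order.
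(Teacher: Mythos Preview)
Your proof is correct and uses exactly the same ingredients as the paper's proof (\cref{exp2}, \cref{solcyc}, \cref{soldihe}, \cref{cyclicuni}, and the divisor bound for $2m$); the only difference is that the paper splits cases according to whether $(X,r)$ is uniconnected rather than whether $(B,+)$ is cyclic, but via \cref{solcyc} these two dichotomies collapse into one another. The arguments are essentially identical.
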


\begin{proof}
    If $(X_{B,I,\mathcal{A},\mathcal{K}},r) $ is uniconnected the thesis follows by \cref{cyclicuni}. Otherwise, by \cref{soldihe} $(X_{B,I,\mathcal{A},\mathcal{K}},r)$ has size $m$. In this way, by \cref{solcyc} $(B,+)$ can not be cyclic, therefore by \cref{exp2} $d$ can not be $2m$, hence $d<2m$. Since $d$ must divide $2m$, $d\leq m= |X_{B,I,\mathcal{A},\mathcal{K}} |$.
\end{proof}

\begin{cor}
Suppose that $(X_{B,I,\mathcal{A},\mathcal{K}},r)$ is indecomposable and $(B,\circ)$ is isomorphic to a dihedral group of size $2^n\cdotp m $ for $n>3$ and $m$ an odd number. Then, $d$ is equal to $2^{n}\cdotp m$ if $B$ is cyclic and it is equal to $2^{n-1}\cdotp m$ otherwise. In every case, $d\leq |X_{B,I,\mathcal{A},\mathcal{K}}|$.
\end{cor}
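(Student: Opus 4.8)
The plan is to reduce this final Corollary to the combination of two earlier results in the subsection: \cref{dihedral}, which already computes the Dehornoy's class $d$ for solutions whose permutation left brace $B$ is dihedral of size $2^n\cdotp m$ with $n>3$ and $m$ odd, and \cref{cyclicuni} together with \cref{soldihe} for the bound $d\leq|X|$. Concretely, the equality part of the statement—that $d=2^n\cdotp m$ when $B$ is a cyclic left brace and $d=2^{n-1}\cdotp m$ otherwise—is exactly the conclusion of \cref{dihedral}, since the hypothesis there (that $B$ is isomorphic to a dihedral group of size $2^n\cdotp m$ for $n>3$, $m$ odd) coincides with the hypothesis here. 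So the first step is simply to invoke \cref{dihedral} to dispose of the computation of $d$ entirely.

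The remaining work is the inequality $d\leq|X_{B,I,\mathcal{A},\mathcal{K}}|$. Here I would split into the uniconnected and non-uniconnected cases, mirroring the proof of the generalized-dihedral proposition that immediately precedes this corollary. If $(X_{B,I,\mathcal{A},\mathcal{K}},r)$ is uniconnected, then $|X|=|B|$ and \cref{cyclicuni} gives $d\leq|X|$ directly. If it is not uniconnected, then since a dihedral group is in particular a generalized dihedral group, \cref{soldihe} forces $|X|=2^{n-1}\cdotp m$ (the half-size case, as $|B|=2^n\cdotp m$). In that non-uniconnected situation \cref{solcyc} prevents $(B,+)$ from being cyclic—otherwise the solution would be uniconnected—so by the already-established equality $d=2^{n-1}\cdotp m=|X|$, and again $d\leq|X|$.

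The only genuine subtlety, and the step I would check most carefully, is the bookkeeping that identifies $|X|=2^{n-1}\cdotp m$ precisely with the non-cyclic value $d=2^{n-1}\cdotp m$ in the non-uniconnected branch; this is what makes the inequality tight rather than merely a consequence of $d\mid|B|$. Everything else is a direct citation, so I would phrase the proof as: the equality follows from \cref{dihedral}; for the bound, apply \cref{cyclicuni} in the uniconnected case, and in the non-uniconnected case combine \cref{soldihe}, \cref{solcyc}, and the non-cyclic value of $d$ from \cref{dihedral} to conclude $d=|X|$. I do not expect any obstacle requiring new computation, since the hard analytic content (the additive-order computation in the dihedral left brace) was already carried out in \cref{dihedral}.
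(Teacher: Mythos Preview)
Your proposal is correct and follows essentially the same approach as the paper: the paper's proof simply cites \cref{dihedral} for the equality and then \cref{dihedral} together with \cref{soldihe} for the bound, with the uniconnected/non-uniconnected case split (and the use of \cref{solcyc} to rule out a cyclic $(B,+)$ in the non-uniconnected case) implicit via the immediately preceding generalized-dihedral proposition that you explicitly mirrored. Your write-up is just a more detailed unpacking of those two citations.
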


\begin{proof}
    The first part follows by \cref{dihedral}. The last part follows by \cref{dihedral} and \cref{soldihe}.
\end{proof}

\noindent We highlight that the previous corollary can not be extended when $B$ is a left brace with $(B,\circ)$ a dihedral group of size $2^2 m$ or $2^3 m$. Indeed, inspecting by \cite{Ve15pack} the indecomposable solutions of size $\leq 8$, we find one indecomposable involutive solution having size $4$, with a dihedral permutation group of size $2^3$ and Dehornoy class $2$. 





\bibliographystyle{elsart-num-sort}
\bibliography{Bibliography}

\end{document}